\pgfplotsset{compat=newest}
\numberwithin{equation}{section}
\newtheorem{assumption}[theorem]{Assumption}
\title{Stochastic Momentum ADMM for nonconvex and nonsmooth optimization with application to PnP algorithm }
\author{
Kangkang Deng\thanks{ Department of Mathematics,  National University of Defense Technology, Changsha, 410073,
China (\email{freedeng1208@gmail.com}, \email{zhangshuchang19@nudt.edu.cn},\email{wangboyu20@nudt.edu.cn},\email{jinjiachen@nudt.edu.cn}) } 
\and Shuchang Zhang\footnotemark[1] \and  Boyu Wang\footnotemark[1] \and Jiachen Jin\footnotemark[1] \and Juan Zhou\footnotemark[1] \thanks{School of Mathematics and Computational Science, Xiangtan University, Xiangtan, 411105, China. (\email{juanzhou425@gmail.com})} \and Hongxia Wang\footnotemark[1] \thanks{Corresponding author. (\email{wanghongxia@nudt.edu.cn})}.
}
\begin{document}

\maketitle

\begin{abstract}
This paper proposes SMADMM, a single-loop Stochastic Momentum Alternating Direction Method of Multipliers for solving a class of nonconvex and nonsmooth composite optimization problems. SMADMM achieves the optimal oracle complexity of $\mathcal{O}(\epsilon^{-3/2})$ in the online setting. Unlike previous stochastic ADMM algorithms that require large mini-batches or a double-loop structure, SMADMM uses only $\mathcal{O}(1)$ stochastic gradient evaluations per iteration and avoids costly restarts. To further improve practicality, we incorporate dynamic step sizes and penalty parameters, proving that SMADMM maintains its optimal complexity without the need for large initial batches. We also develop PnP-SMADMM by integrating plug-and-play priors, and establish its theoretical convergence under mild assumptions. Extensive experiments on classification, CT image reconstruction, and phase retrieval tasks demonstrate that our approach outperforms existing stochastic ADMM methods both in accuracy and efficiency, validating our theoretical results.
\end{abstract}
\begin{keywords}
ADMM, nonconvex, stochastic, momentum, iteration complexity
\end{keywords}

\begin{AMS}
 65K05, 65K10, 90C05, 90C26, 90C30
\end{AMS}

\section{Introduction}
In this paper, we study a class of nonconvex and nonsmooth constrained optimization problems of the form:
\begin{equation}\label{prob}
    \min_{x,y} \mathbb{E}_{\xi \in \mathcal{D}}[f(x,\xi)] + h(y), \quad \text{s.t.} \quad Ax + By = c,
\end{equation}
where $f(x,\xi): \mathbb{R}^n \to \bar{\mathbb{R}}$ is continuously differentiable but not necessarily convex, and \( h : \mathbb{R}^d \to \bar{\mathbb{R}} \) is a convex function; $A\in \mathbb{R}^{p\times n}$ and $B\in\mathbb{R}^{p\times d}$; $\mathcal{D}$ is a distribution over an arbitrary space $\Xi$. We denote \( F(x) :=\mathbb{E}_{\xi \in \mathcal{D}}[f(x,\xi)] \).     
This formulation arises in a variety of machine learning applications, including  statistical learning \cite{boyd2011distributed}, distributed learning \cite{yang2022survey,mancino2023decentralized,mirzaeifard2024decentralized}, computer vision, and 3D CT image reconstruction \cite{barber2024convergence,he2018optimizing}, among others. In this paper, we focus on an online setting. Specifically, we do not know the entire function $F$, but we are allowed to access $f$ through a stochastic first-order  oracle (SFO), which returns a stochastic gradient at a queried point.  That is, given any $x$, we may compute $\nabla f(x, \xi)$ for some $\xi$ drawn i.i.d. from $\mathcal{D}$.  This SFO mechanism is particularly relevant in many online learning and expected risk minimization problems, where the noise in the SFO stems from the uncertainty inherent in sampling from the underlying streaming data.  Our primary interest lies in analyzing the oracle complexity, defined as the total number of queries to the SFO  required to attain an \(\epsilon\)-KKT point pair, as shown in Definition~\ref{def:kkt}.

A widely-used method for solving problem \eqref{prob} is the ADMM  \cite{glowinski1975approximation,gabay1976dual,boyd2011distributed,han2022survey}. The popularity of ADMM stems from its flexibility in splitting the objective into a loss term \(f\) and a regularizer \(h\), making it particularly effective for handling complex structured problems commonly encountered in machine learning. In recent years, stochastic variants of ADMM \cite{huang2016stochastic,huang2019faster,zheng2016fast, zeng2024unified,zheng2016stochastic, zeng2024accelerated} have been extensively studied, addressing both convex and nonconvex settings. These works primarily focus on improving iteration complexity by employing stochastic variance-reduced gradient estimators such as SVRG \cite{johnson2013accelerating} and SARAH \cite{Nguyen2017}, etc. However, these methods are typically restricted to the finite-sum setting.

A notable exception is SARAH-ADMM~\cite{huang2019faster}, which achieves an optimal oracle complexity of \( \mathcal{O}(\epsilon^{-3/2}) \) in the online setting. However, it suffers from a double-loop structure, requiring expensive large-batch gradients at each outer iteration. This significantly limits its practical deployment in streaming environments or when large batches are unavailable.
Moreover, existing stochastic ADMM methods often rely on fixed penalty parameters, which can severely affect performance and convergence. There is a lack of understanding on how to design adaptive penalty schedules while maintaining optimal theoretical guarantees.

\begin{table*}
 \caption{Comparison of the oracle complexity results of Online ADMM algorithms. The oracle complexity means the total number of queries to the SFO given in Definition \ref{def:stochastic}.    We do not list the work in \cite{zeng2024unified,zeng2024accelerated} since they focus on the finite-sum setting and do not apply to the online setting.
 }
    \centering
    \begin{tabular}{|c|c|c|c|c|}
  \hline
        Algorithm  & Batchsize &  Penalty parameter &  Single loop  & Oracle complexity \\ \hline 
           \cite{huang2016stochastic}     &   $\mathcal{O}(1)$ & fixed & \checkmark & $\mathcal{O}(\epsilon^{-2})$  \\ \hline
 \cite{huang2019faster}     &  $\mathcal{O}(\epsilon^{-1})$ or $\mathcal{O}(\epsilon^{-1/2})$ &  fixed & 
 \ding{55} & $\mathcal{O}(\epsilon^{-\frac{3}{2}})$\\\hline
         Ours (Theorem \ref{theorem:constant})  &  $\mathcal{O}(\epsilon^{-1/2})$ then $\mathcal{O}(1)$ & fixed & \checkmark & $\mathcal{O}(\epsilon^{-\frac{3}{2}})$\\ \hline
        Ours  (Theorem \ref{theorem:diminish})  &   $\mathcal{O}(1)$ &  dynamic & \checkmark & $\tilde{\mathcal{O}}(\epsilon^{-\frac{3}{2}})$\\ \hline
    \end{tabular}
    \label{tab:my_label}
\end{table*}

\subsection{Contributions}
We summarize our main contributions as follows:

\begin{itemize}

\item \textbf{Single-loop stochastic ADMM with optimal oracle complexity.}  
We propose SMADMM, a novel single-loop stochastic ADMM algorithm that leverages momentum-based gradient estimators~\cite{cutkosky2019momentum,levy2021storm}. SMADMM achieves the optimal oracle complexity of $\mathcal{O}(\epsilon^{-3/2})$ for nonconvex composite problems, using only $\mathcal{O}(1)$ stochastic samples per iteration (except for the first iteration, which requires a mini-batch of size $\mathcal{O}(\epsilon^{-1/2})$).
Unlike SARAH-ADMM~\cite{huang2019faster}, which relies on a double-loop structure with large batch sizes, SMADMM is the first single-loop stochastic ADMM algorithm to match the optimal oracle complexity in the online setting.

\item \textbf{SMADMM with dynamic penalty parameter.} 
To eliminate the need for large batch sizes, we further analyze SMADMM under time-varying parameters, including dynamic step sizes, momentum, and penalty parameters. We show that the algorithm still retains the optimal complexity of $\mathcal{O}(\epsilon^{-3/2})$.
Notably, SMADMM is the first stochastic ADMM method that supports dynamic penalty scheduling, enhancing both convergence and robustness. A detailed comparison of oracle complexities is presented in Table~\ref{tab:my_label}, where SMADMM consistently outperforms existing online stochastic ADMM algorithms~\cite{huang2016stochastic,huang2019faster}.

    \item \textbf{PnP-integrated stochastic ADMM.}  Finally, we extend our method by integrating it with PnP priors, resulting in the PnP-SMADMM algorithm. Under mild assumptions, we prove that PnP-SMADMM achieves the optimal oracle complexity of \(\mathcal{O}(\epsilon^{-\frac{3}{2}})\), outperforming existing PnP with stochastic (PnP-SADMM) algorithms. Numerical experiments on classification, CT image reconstruction and phase retrieve tasks demonstrate the practical effectiveness of our approach and validate the theoretical findings.

\end{itemize}

\subsection{Related works}

\textbf{Stochastic ADMM algorithm.}  Large-scale optimization problems  \eqref{prob} 
 typically involve a large sum of $N$ component functions, making it infeasible for deterministic ADMMs to compute the full gradient 
at each iteration. Early stochastic ADMM algorithms focus on  the convex case, such as \cite{ouyang2013stochastic,wang2013online,suzuki2013dual}. There are also many works for considering  variance reduction (VR) techniques into ADMM, including \cite{zhong2014fast,suzuki2014stochastic,zheng2016fast,xu2017admm,fang2017faster,liu2020accelerated}.  So far, the above discussed ADMM methods build on the convexity of objective functions. In fact, ADMM is also highly successful in solving various nonconvex problems such as tensor decomposition and training neural networks.   the nonconvex stochastic ADMMs \cite{huang2016stochastic,zheng2016stochastic} have been proposed with the VR techniques such as the SVRG \cite{johnson2013accelerating} and the SAGA \cite{defazio2014saga}. In
addition, \cite{huang2018mini} have extended the online/stochastic ADMM \cite{ouyang2013stochastic} to the nonconvex setting. \cite{huang2019faster} propose a SPIDER-ADMM by using a new stochastic path-integrated differential estimator
(SPIDER). \cite{zeng2024unified} propose a unified framework of inexact stochastic ADMM. \cite{zeng2024accelerated} propose an accelerated SVRG-ADMM algorithm (ASVRG-ADMM), which extends SVRG-ADMM by incorporating momentum techniques.  However, the method depends on a double-loop structure, necessitating large batch gradient calculations after each inner loop. This becomes impractical for real-time applications, particularly in scenarios like streaming or online learning, where the batch size cannot be controlled.

\textbf{PnP-type algorithms.}
Plug-and-play (PnP) \cite{venkatakrishnan2013plug,ahmad2020plug,kamilov2023plug} has emerged as a class of deep learning algorithms for solving inverse problems by
denoisers as image priors. PnP has been successfully used in many applications such as super-resolution,
phase retrieval, microscopy, and medical imaging \cite{zhang2019deep,metzler2018prdeep,zhang2017learning,wei2020tuning}. PnP draws an elegant connection between proximal methods and deep image models by replacing the proximity operator of $h$ with an image denoiser. These denoisers are used in various proximal algorithms such as HQS \cite{zhang2017learning,zhang2021plug}, ADMM and DRS \cite{romano2017little,ryu2019plug}, Proximal Gradient Descent (PGD) \cite{terris2020building}. To obtain the convergence of PnP algorithms, we need to add restrictions on deep denoiser, such as averaged \cite{sun2019online}, firmly nonexpansive \cite{sun2021scalable,terris2020building} or simply nonexpansive \cite{reehorst2018regularization,liu2021recovery}.   Another line of PnP work \cite{cohen2021has,hurault2021gradient,hurault2022proximal}  has explored the specification of the denoiser as a gradient-descent / proximal step  on a functional parameterized by a deep neural network.  Research on stochastic PnP algorithms remains relatively limited, with the stochastic PnP-ADMM  algorithms \cite{tang2020fast,sun2021scalable} being the most closely related work. However, these studies primarily focus on the case where $F$ is convex, which differs from the non-convex setting addressed in this work. 
\section{Preliminary}

Let us first define the approximated stationary point of \eqref{prob} based on the KKT condition. The Lagrangian function is defined as
\begin{equation}\nonumber
    \mathcal{L}(x, y, \lambda)=F(x)+h(y)-\langle\lambda, A x+B y-c\rangle. 
\end{equation}
We give the definition of  $\epsilon$-stationary point of \eqref{prob}.
\begin{definition}\label{def:kkt}
    Given  $\epsilon>0$, the point $(x^*,y^*,\lambda^*)$ is said
to be an $\epsilon$-stationary point of \eqref{prob}, if it holds that
\begin{equation}\nonumber
   \mathbb{E}\left[ \text{dist}^2(0,\partial L(x^*,y^*,\lambda^*))\right] \leq \epsilon,
\end{equation}
where $\text{dist}^2(0,\partial L) = \min_{z\in \partial L}\|z\|^2$, and  $\partial L(x,y,\lambda)$ is defined by
\begin{equation}\label{eq:parial}
    \partial L(x,y,\lambda):= \left[ 
    \begin{array}{c}
         \nabla_x L(x,y,\lambda)  \\
         \partial_y L(x,y,\lambda) \\
       Ax + By - c
    \end{array}
    \right].
\end{equation}
\end{definition}
Next, we review the standard ADMM for solving \eqref{prob}. The augmented Lagrangian function of \eqref{prob} is defined as
\begingroup
\fontsize{9pt}{10pt}\selectfont
$$
\mathcal{L}_\rho(x, y, \lambda)=F(x)+h(y)-\langle\lambda, A x+B y-c\rangle+\frac{\rho}{2}\|A x+B y-c\|^2
$$
\endgroup
where $\lambda$ is a Lagrange multiplier, and $\rho$ is a penalty parameter. At $t$-th iteration, the ADMM executes the following update:
$$
\left\{
\begin{aligned}
& y_{k+1}=\arg \min _y \mathcal{L}_\rho\left(x_k, y, \lambda_k\right) \\
& x_{k+1}=\arg \min _x \mathcal{L}_\rho\left(x, y_{k+1}, \lambda_k\right) \\
& \lambda_{k+1}=\lambda_k-\rho\left(A x_{k+1}+B y_{k+1}-c\right)
\end{aligned}
\right.
$$
When $F$ involves a large sum of $N$ component functions, the above ADMM algorithm requires the computation of the full gradient at each iteration, which becomes computationally infeasible. This limitation motivates the design of a stochastic ADMM algorithm for solving \eqref{prob}.

Finally, we present several assumptions for problem \eqref{prob}, which are consistent with those outlined in \cite{huang2019faster}.
\begin{assumption}\label{assm:lipsciz}
    Given any $\xi\in \mathcal{D}$,  the function $x \mapsto f(x,\xi)$ is $L$-smooth such that
$$
\mathbb{E}[ \left\|\nabla f(x,\xi)-\nabla f(y,\xi)\right\|] \leq L\|x-y\|, \forall x, y \in \mathbb{R}^n.
$$
\end{assumption} 

\begin{assumption}\label{assum:bound-grad}
    The stochastic gradient of loss function $f(x,\xi)$ is bounded, i.e., there exists a constant $\delta>0$ such that for all $x$ and $\xi\in \mathcal{D}$, it follows $\|\nabla f(x,\xi)\|^2 \leq \delta^2$.
\end{assumption} 
\begin{assumption}\label{assum:low-bound}
    $f(x)$ and $h(y)$ are all lower bounded, and let $f^*=\inf _x f(x)>-\infty$ and $h^*=$ $\inf _{y} h\left(y\right)>-\infty$.
\end{assumption}
\begin{assumption}\label{assum:rank-A}
    $A$ is a full row or column rank matrix. 
\end{assumption} 

\begin{assumption}\label{assum:variance}
    We assume access to a stream of independent random variables $\xi_1,\cdots,\xi_K\in \mathcal{D}$ such that for all $k$ and for all $x$, $\mathbb{E}[\nabla f(x,\xi)] = \nabla f(x)$.  We also assume there
is some $\sigma^2$ that upper bounds the noise on gradients: $\mathbb{E}[\|\nabla f(x,\xi) - \nabla f(x)\|^2]\leq \sigma^2$.
\end{assumption}
To measure the oracle complexity,  we give the definition of a stochastic first-order oracle (SFO) for \eqref{prob}.
\begin{definition}[\textbf{stochastic first-order oracle}]\label{def:stochastic}
    For the problem \eqref{prob}, a stochastic first-order oracle is defined as follows: compute the stochastic gradient $\nabla f(x,\xi)$ given a sample $\xi\in \mathcal{D}$.  
\end{definition}

\section{Stochastic Momentum ADMM}
This section gives our main algorithm, SMADMM, and presents the iteration complexity result. Since the $x$-subproblem and $y$-subproblem in standard ADMM are difficult to solve due to the existence of expected risk and matrix $B$, we maintain the update of $\lambda$ and change the $x$-subproblem and $y$-subproblem. 
To update the variable $y_{k+1}$, we introduce a proximal term $\frac{1}{2}\|y-y_k\|_{H}^2$ and solve the following subproblem:
\begin{equation}\nonumber
    y_{k+1} = \arg\min \mathcal{L}_{\rho_k}(x_k,y,\lambda_k) + \frac{1}{2}\|y-y_k\|_{H}^2,
\end{equation}
where $H \succ 0$ is a positive define matrix, and $\|y-y_k\|_{H}^2 = (y-y_k)^\top H (y-y_k)$.

For the $x$-subproblem, we first define an approximated function of the form:
\begin{equation}\label{func:alf}
\begin{aligned}
& \hat{\mathcal{L}}_{\rho}(x, y, \lambda, v, \bar{x})
= f(\bar{x})+v^T(x-\bar{x})+\frac{\eta_k}{2}\|x-\bar{x}\|_Q^2 \\
& -\langle\lambda, A x+B y-c\rangle+\frac{\rho}{2}\|A x+B y-c\|^2,
\end{aligned}
\end{equation}
where $v$ is a stochastic gradient estimator of $\nabla f$ at $x_k$ and $Q \succ 0$. Then we update $x_{k+1}$ by
\begingroup
\fontsize{8pt}{10pt}\selectfont
\begin{equation}\label{update-x1}
\begin{aligned}
& x_{k+1} = \arg\min_{x} \hat{\mathcal{L}}_{\rho_k}(x, y_{k+1}, \lambda_k, v_k, x_k) \\
 =& \left(\eta_k Q+{\rho_k} A^T A\right)^{-1}\left(\eta_k Q x_k-v_k- 
 {\rho_k} A^T\left(B y_{k+1}-c-\frac{\lambda}{{\rho_k}}\right)\right).
\end{aligned}
\end{equation}
\endgroup
When $A^T A$ is large, computing inversion of $\eta_k Q+{\rho_k} A^T A$ is expensive. To avoid it, we choose $Q=\left(I-\frac{{\rho_k}}{\eta_k} A^T A\right)$ to linearize it and \eqref{update-x1} reduced to 
\begin{equation}\label{eq:x-simply-update}
x_{k+1} \leftarrow x_k-\frac{1}{\eta_k}\left(v_{k}+{\rho_k} A^T\left(A x_k+B y_{k+1}-c-\frac{\lambda}{{\rho_k}}\right)\right).
\end{equation}
In this case, $\eta_k$ can be viewed as the stepsize for solving $x$-subproblem. Finally, we provide the update rule of $v_k$.  We focus on the following stochastic gradient estimator using the momentum technique introduced in \cite{cutkosky2019momentum}:
\begin{equation}\label{update:qik}
\begin{aligned}
    v_k = \nabla f(x_k,\xi_k) + (1-a_k)(v_{k-1} - \nabla f(x_{k-1},\xi_k)),
\end{aligned}
\end{equation}
where $a_k\in(0,1]$ is the momentum parameter.  We note that \eqref{update:qik} can be rewritten as 
\begin{equation}\label{update:qik1}
\begin{aligned}
    v_k = &  a_k \nabla f(x_k,\xi_k)  +(1-a_k)v_{k-1} \\
    &+(1-a_k)\nabla f(x_k,\xi_k) -  \nabla f(x_{k-1},\xi_k)),
\end{aligned}
\end{equation}
which hybrids stochastic gradient $ \nabla f(x_{k-1},\xi_k)$ with the recursive gradient estimator in \cite{Nguyen2017} for $a_k \in (0,1]$.  The detailed algorithm is referred to as Algorithm \ref{alg:sam}.

\begin{algorithm}[tb]
\caption{SMADMM}
\label{alg:sam}
\textbf{Input}: Parameters $a_k, \eta_k, m, {\rho_k}, H, Q$; initial points $x_0$, $y_0,z_0$. \\
\begin{algorithmic}[1] 
\STATE Sample $\{\xi_{0,t}\}_{t=0}^m$ and let $v_0 = \frac{1}{m} \sum_{t=1}^{m} \nabla f(x_0,\xi_{0,t})$.
\FOR{$k = 0,\cdots,K-1$}
\STATE $ y_{k+1} = \arg\min_{y}\mathcal{L}_{{\rho_k}}(x_k,y_k,\lambda_k) + \frac{1}{2}\|y - y_k\|_H^2.$
\STATE $    x_{k+1}=  \arg\min_x \hat{\mathcal{L}}_{\rho_k}(x, y_{k+1}, \lambda_k, v_k,x_k).$
\STATE $      \lambda_{k+1}=\lambda_k-{\rho_k}\left(A x_{k+1}+B y_{k+1}-c\right).$
\STATE Sample $\xi_{k+1}\in \mathcal{D}$ and let
\begingroup
\fontsize{9pt}{10pt}\selectfont
\begin{equation*}
v_{k+1} = \nabla f(x_{k+1},\xi_{k+1}) + (1-a_{k+1})(v_{k} - \nabla f(x_{k},\xi_{k+1})).
\end{equation*}
\endgroup
\ENDFOR
\end{algorithmic}
\end{algorithm}

\subsection{The convergence result with constant parameters}\label{sec:constant}
Now we provide the main convergence result of our SMADMM algorithm. Let us first consider the case of constant stepsize and constant momentum parameters, i.e., 
$$\eta_k \equiv \eta,~~a_k \equiv a, ~~\rho_k \equiv \rho.$$
In particular, we show that under certain assumptions, SMADMM can achieve a oracle complexity of $\mathcal{O}(\epsilon^{-\frac{3}{2}})$. 
\begin{theorem}\label{theorem:constant}
Suppose that Assumptions \ref{assm:lipsciz}-\ref{assum:variance} hold. Let the sequence $\left\{x_k, y_k, \lambda_k\right\}_{k=1}^K$ be generated by Algorithm \ref{alg:sam}. Assume that
$$\rho_k \equiv \rho = c_{\rho}K^{1/3}, a_k \equiv a = c_{a}^2/\rho^2, \eta_k  \equiv \eta =  \frac{\phi_{\min } \rho \sigma_A }{20\phi_{\max}^2},$$
and $m = \lceil \rho\rceil$, where $\phi_{\min}$ and $\phi_{\max}$ denote the smallest and largest eigenvalues of positive definite matrix $Q$, $\sigma_A$ denotes the smallest eigenvalues of matrix $A A^T$, $c_{a}, c_{\rho}$ is two constants defined by
\begin{equation}\nonumber
    \begin{aligned}
        c_{a} & = \max\left\{ (\frac{1+2L^2}{2} + \frac{20L^2}{\sigma_A}) \frac{2}{\tau}, 1 \right\} ,\\
        c_{\rho} & = \max\{\frac{20L^2 + 2\sigma_A L }{\sigma_A\tau},\frac{\tau \sigma^2_{\max}(H)}{4 \|A\|^2 \|B\|^2 \sigma_{\min}(H)},1 \},
    \end{aligned}
\end{equation}
where $\tau  = \frac{ \phi_{\min }^2 \sigma_A }{40 \phi_{\max }^2}+\frac{\sigma_A }{2}$, $\sigma_{\min}(H)$ and $\sigma_{\max}(H)$ denote the smallest and largest eigenvalues of positive definite matrix $H$. Then we have that
\begin{equation}\nonumber
\begin{aligned}
   & \min_{1\leq k \leq K} \mathbb{E}\left[ \text{dist}^2(0,\partial L(x_k,y_k,\lambda_k))\right] \\
    \leq &  \mathcal{H}_1 K^{-2/3} +  \mathcal{H}_2 K^{-4/3} + \mathcal{H}_3 K^{-2},
    \end{aligned}
\end{equation}
where $\mathcal{H}_1,\mathcal{H}_2$ and $\mathcal{H}_3$ are constants defined in the Appendix \ref{appen:constant}.  As a consequence, Algorithm \ref{alg:sam} obtains an $\epsilon$-stationary point with at most
    $$
    \mathcal{K}: = \mathcal{O}(\max\{ \mathcal{K}_1, \mathcal{K}_2, \mathcal{K}_3 \})
    $$
    iterations. Here, $\mathcal{K}_1,\mathcal{K}_2,\mathcal{K}_3$ are given as follows:
    \begin{equation}\nonumber
        \begin{aligned}
            \mathcal{K}_1: & =  \mathcal{H}_1^{1.5} \epsilon^{-\frac{3}{2}}, 
            \mathcal{K}_2: = \mathcal{H}_2^{3/4} \epsilon^{-3/4},\mathcal{K}_3: = \mathcal{H}_3^{1/2} \epsilon^{-\frac{1}{2}}.
        \end{aligned}
    \end{equation}
\end{theorem}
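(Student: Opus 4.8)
## Proof Proposal

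The plan is to establish a one-step descent inequality for a suitable Lyapunov (potential) function, then telescope it over $k = 0, \dots, K-1$ to bound the minimum stationarity measure, and finally optimize the constants to read off the $\mathcal{O}(\epsilon^{-3/2})$ complexity. The natural candidate for the potential is a combination of the augmented Lagrangian $\mathcal{L}_{\rho}(x_k, y_k, \lambda_k)$, a multiplier-control term of the form $\frac{c}{\rho}\|\lambda_k - \lambda_{k-1}\|^2$ (standard in nonconvex ADMM analyses to absorb the dual ascent step), and a momentum-error term $\frac{\beta}{a}\mathbb{E}\|v_k - \nabla f(x_k)\|^2$ coming from the STORM-type estimator \eqref{update:qik}. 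First I would record the three elementary per-step relations: (i) the $y$-update gives a decrease of order $\|y_{k+1}-y_k\|_H^2$ by strong convexity of the proximal subproblem; (ii) the $x$-update, using the optimality of \eqref{update-x1} and $L$-smoothness of $F$, gives a decrease controlled by $\eta\|x_{k+1}-x_k\|_Q^2$ minus a cross term involving the gradient estimation error $\|v_k - \nabla f(x_k)\|^2$; (iii) the $\lambda$-update produces the troublesome ascent term $\frac{1}{\rho}\|\lambda_{k+1}-\lambda_k\|^2$, which must be bounded by $\|x_{k+1}-x_k\|^2$ and the error term using the $x$-optimality condition and Assumption \ref{assum:rank-A} (full rank of $A$, hence $\sigma_A > 0$, lets us invert $AA^T$ to write $\lambda_{k+1}-\lambda_k$ in terms of $v_k$, $x$-increments and $y$-increments).

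The second ingredient is the momentum recursion. From \eqref{update:qik} and Assumption \ref{assum:variance}, the standard STORM bound gives
\begin{equation}\nonumber
\mathbb{E}\|v_{k+1}-\nabla f(x_{k+1})\|^2 \leq (1-a_{k+1})^2\mathbb{E}\|v_k - \nabla f(x_k)\|^2 + 2a_{k+1}^2\sigma^2 + 2L^2\mathbb{E}\|x_{k+1}-x_k\|^2,
\end{equation}
so with $a_k \equiv a = c_a^2/\rho^2$ the contraction factor $(1-a)$ lets the weighted error term $\frac{\beta}{a}\mathbb{E}\|v_k-\nabla f(x_k)\|^2$ shed a $-\beta\mathbb{E}\|v_k-\nabla f(x_k)\|^2$ each step (enough to cancel the cross term from the $x$-update), at the cost of adding $\frac{2\beta a \sigma^2}{1} = O(a\sigma^2)$ and $\frac{2\beta L^2}{a}\mathbb{E}\|x_{k+1}-x_k\|^2$. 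The crucial scaling choice is that $\eta \sim \rho$ while $a \sim \rho^{-2}$, so $\frac{L^2}{a} \sim L^2\rho^2$ whereas the available descent from the $x$-step is $\sim \eta \sim \rho$ times $\|x_{k+1}-x_k\|_Q^2$ — wait, this needs $\rho$ large enough that the $\|x_{k+1}-x_k\|^2$ coefficients net out negative; here is where the explicit forms of $c_\rho$, $c_a$, $\tau$, and $\eta = \phi_{\min}\rho\sigma_A/(20\phi_{\max}^2)$ are engineered. I would assemble the full potential difference $\Phi_{k+1} - \Phi_k \leq -\tau(\mathbb{E}\|x_{k+1}-x_k\|^2 + \mathbb{E}\|y_{k+1}-y_k\|^2) + (\text{noise floor})$, where the noise floor is $O(a\sigma^2 + \text{initial batch error})$; with $a = c_a^2/\rho^2$, $\rho = c_\rho K^{1/3}$, and $m = \lceil\rho\rceil$ the initial error $\mathbb{E}\|v_0-\nabla f(x_0)\|^2 \leq \sigma^2/m$ is $O(\rho^{-1})$, so each per-step residual is $O(\rho^{-2})$ after dividing appropriately, giving $O(K^{-2/3})$ when summed over $K$ steps and divided by $K$.

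The third step converts increments into the stationarity measure of Definition \ref{def:kkt}. From the optimality conditions of the subproblems: $\nabla_x L(x_{k+1},y_{k+1},\lambda_{k+1})$ equals a combination of $v_k - \nabla f(x_k)$, $\nabla f(x_{k+1}) - \nabla f(x_k)$ (Lipschitz, $\leq L\|x_{k+1}-x_k\|$), $\eta Q(x_{k+1}-x_k)$, $\rho A^TB(y_{k+1}-y_k)$ and $A^T(\lambda_{k+1}-\lambda_k)$; similarly $\partial_y L$ is controlled by $H(y_{k+1}-y_k)$ and $\rho B^TB(\ldots)$; and the feasibility residual $Ax_{k+1}+By_{k+1}-c = -\frac{1}{\rho}(\lambda_{k+1}-\lambda_k)$. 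So $\mathrm{dist}^2(0,\partial L(x_{k+1},y_{k+1},\lambda_{k+1})) \lesssim (\eta^2 + \rho^2 + 1)(\|x_{k+1}-x_k\|^2 + \|y_{k+1}-y_k\|^2) + \|v_k-\nabla f(x_k)\|^2$, and since $\eta,\rho \sim K^{1/3}$ the prefactor is $O(K^{2/3})$. Telescoping the descent inequality bounds $\min_k (\|x_{k+1}-x_k\|^2 + \|y_{k+1}-y_k\|^2) \leq \frac{\Phi_0 - \Phi^*}{\tau K} + (\text{noise floor})$; using Assumption \ref{assum:low-bound} to lower-bound $\Phi$, the leading term is $O(1/K)$, multiplied by the $O(K^{2/3})$ prefactor gives $\mathcal{H}_1 K^{-2/3}$, and the lower-order noise contributions (initial batch, $a\sigma^2$ terms) give $\mathcal{H}_2 K^{-4/3}$ and $\mathcal{H}_3 K^{-2}$ after collecting the $K$-powers. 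Setting each term $\leq \epsilon$ yields $K \geq \mathcal{H}_1^{3/2}\epsilon^{-3/2}$ (dominant), $\mathcal{H}_2^{3/4}\epsilon^{-3/4}$, $\mathcal{H}_3^{1/2}\epsilon^{-1/2}$, hence $\mathcal{K} = \mathcal{O}(\epsilon^{-3/2})$ and total oracle calls $m + (K-1) = O(\rho) + K = O(K) = O(\epsilon^{-3/2})$.

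\textbf{Main obstacle.} The delicate part is the $\lambda$-bookkeeping: bounding $\frac{1}{\rho}\|\lambda_{k+1}-\lambda_k\|^2$ tightly enough that, after combining with the $+\frac{2\beta L^2}{a}\|x_{k+1}-x_k\|^2$ term forced by the momentum estimator, the net coefficient of $\|x_{k+1}-x_k\|^2$ in $\Phi_{k+1}-\Phi_k$ is still negative. This is precisely what dictates the scalings $\eta \propto \rho$, $a \propto \rho^{-2}$, and the exact constants $c_a$, $c_\rho$, $\tau$ in the statement — so the real work is verifying the inequality $\eta\phi_{\min} - (\text{terms from }\lambda, \text{ from }L^2/a, \text{ from smoothness}) \geq \tau$ with the given choices, which reduces to checking a handful of scalar inequalities among $L$, $\sigma_A$, $\phi_{\min}$, $\phi_{\max}$, and $\sigma_{\min}(H), \sigma_{\max}(H)$. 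A secondary subtlety is that Assumption \ref{assm:lipsciz} is stated in expectation on $\|\nabla f(x,\xi)-\nabla f(y,\xi)\|$ (an $L^1$ bound), so the $L^2$ Lipschitz-type bounds used in the momentum recursion should be routed through Assumption \ref{assum:bound-grad} (uniform gradient bound $\delta$) or Jensen where an $L^2$ quantity is genuinely needed.
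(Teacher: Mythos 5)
Your overall route coincides with the paper's: a per-step descent estimate for the augmented Lagrangian in which the ascent term $\frac{1}{\rho}\|\lambda_{k+1}-\lambda_k\|^2$ is absorbed through the $x$-optimality identity $A^T\lambda_{k+1}=v_k-\eta Q(x_k-x_{k+1})$ and $\sigma_A>0$, the STORM recursion for $\mathbb{E}\|v_k-\nabla f(x_k)\|^2$, and a lemma converting the increments $\|x_{k+1}-x_k\|$, $\|y_{k+1}-y_k\|$ and the gradient-estimation error into the three components of the KKT residual. (For the constant-parameter case the paper sums the error recursion directly via a geometric-series lemma instead of folding a weighted error term into a Lyapunov function; for constant $a$ this is the same bookkeeping. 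It also does not need your $\frac{c}{\rho}\|\lambda_k-\lambda_{k-1}\|^2$ potential term, since the dual increment is bounded pointwise by the primal increments and the estimator error.)

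The one place your accounting does not close as written is the final power count. You state the potential decrease as $-\tau\left(\mathbb{E}\|x_{k+1}-x_k\|^2+\mathbb{E}\|y_{k+1}-y_k\|^2\right)$ with $\tau$ a constant and then conclude that ``$O(1/K)$ multiplied by the $O(K^{2/3})$ prefactor gives $\mathcal{H}_1K^{-2/3}$''; but $O(1/K)\cdot O(K^{2/3})=O(K^{-1/3})$, which is not the claimed rate. The argument only closes because the net coefficient of $\mathbb{E}\|x_{k+1}-x_k\|^2$ in the descent is $\Theta(\rho)=\Theta(K^{1/3})$ (the paper shows it is at least $\frac{\tau}{4}\rho$, using $\eta\phi_{\min}+\frac{\sigma_A\rho}{2}\sim\rho$ together with the choices of $c_a,c_\rho$), so that $\sum_{k}\mathbb{E}\|x_{k+1}-x_k\|^2=O(\rho^{-1})=O(K^{-1/3})$ rather than $O(1)$; combined with the $O(\rho^2)=O(K^{2/3})$ prefactors in the KKT-residual bounds and division by $K$, this gives $K^{2/3}\cdot K^{-1/3}/K=K^{-2/3}$. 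Relatedly, the cancellation of the $\frac{2L^2}{a}\sim L^2\rho^2$ term you flag works not because $\rho$ is large but because that term enters the descent only through weights of size $\Theta(1/\rho)$ (namely $\nu_k=c_a/\rho$ and the $\frac{5}{\rho\sigma_A}$ factor multiplying the dual increment), so the offending coefficient is $\Theta(L^2\rho/c_a^2)$ and is dominated by taking $c_a$ large. These are repairable within your framework, but they are precisely the computations the theorem's constants encode, so they must be carried out rather than asserted. Your side remark that Assumption \ref{assm:lipsciz} is only an $L^1$ smoothness bound while the momentum recursion uses a mean-square version is a fair observation about the paper's own hypotheses.
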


According to Theorem \ref{theorem:constant}, our algorithm achieves an oracle complexity of \(\mathcal{O}(\epsilon^{-\frac{3}{2}})\), which outperforms existing methods such as \cite{huang2016stochastic}, where the oracle complexity is at best \(\mathcal{O}(\epsilon^{-2})\). Furthermore, compared to the approach in \cite{huang2019faster}, our method only requires an initial sample size of \(m = \mathcal{O}(\epsilon^{-1/2})\).

\subsection{The convergence result with dynamic parameters}\label{sec:diminish}
To mitigate the impact of the initial sample size, we extend our analysis to the case where both the stepsize and the momentum parameter are updated dynamically. The following theorem establishes that, even with an initial sample size of \(\mathcal{O}(1)\), our algorithm achieves the same oracle complexity as stated in Theorem \ref{theorem:constant}.

\begin{theorem}\label{theorem:diminish}
Suppose that Assumptions \ref{assm:lipsciz}-\ref{assum:variance} hold. Let the sequence $\left\{x_k, y_k, \lambda_k\right\}_{k=1}^K$ be generated by Algorithm \ref{alg:sam}. Assume that
$$\rho_k = c_{\rho}k^{1/3}, a_{k+1} = c_ak^{-2/3}, \eta_k = c_{\eta}k^{1/3},$$
and $m = 1$, 
where $ c_{\rho}, c_a, c_\eta$ are constants satisfying:
\begin{equation}\nonumber
    \begin{aligned}
       c_{\rho} & \geq \frac{8L}{\sigma_A} + \frac{160L^2}{\sigma_A^2} + \frac{\|A\|\|B\|}{\sigma^2_{\max}(H)},\\
        c_a & \geq \frac{3c_{\nu}c_{\rho} + 60 + 2c_{\gamma}\sigma_A c_{\rho}}{3c_{\gamma}\sigma_A c_{\rho}}, c_{\eta} \leq \frac{\sigma_A c_{\rho}}{\sqrt{160}\phi_{\max}}.  
    \end{aligned}
\end{equation}
Then we have that
\begin{equation}\nonumber
\begin{aligned}
  &  \min_{1\leq k \leq K} \mathbb{E}\left[ \text{dist}^2(0,\partial L(x_k,y_k,\lambda_k))\right] \\
    \leq & (\mathcal{G}_1+\mathcal{G}_3) K^{-2/3} +  \mathcal{G}_2 K^{-1},
    \end{aligned}
\end{equation}
where $\mathcal{G}_1,\mathcal{G}_2$ and $\mathcal{G}_3$ are constants dependent on a logaithmic factor of $K$, which are defined in the Appendix \ref{appendix-diminish}.  As a consequence, Algorithm \ref{alg:sam} obtains an $\epsilon$-stationary point with at most
    $$
    \mathcal{K}: = \mathcal{O}(\max\{ \mathcal{K}_4, \mathcal{K}_5 \})
    $$
    iterations. Here, $\mathcal{K}_4,\mathcal{K}_5$ are given as follows:
    \begin{equation}\nonumber
        \begin{aligned}
            \mathcal{K}_4: & =  (\mathcal{G}_1+\mathcal{G}_3)^{1.5} \epsilon^{-\frac{3}{2}}, 
            \mathcal{K}_5: = \mathcal{G}_2 \epsilon^{-1}.
        \end{aligned}
    \end{equation}
\end{theorem}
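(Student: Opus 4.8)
\textbf{Proof proposal for Theorem~\ref{theorem:diminish}.}

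The plan is to build a Lyapunov (potential) function that simultaneously controls the decrease of the augmented Lagrangian, the multiplier drift, and the gradient-estimation error of the momentum estimator, and then to exploit the polynomially growing/decaying schedules $\rho_k = c_\rho k^{1/3}$, $a_{k+1} = c_a k^{-2/3}$, $\eta_k = c_\eta k^{1/3}$ to make all error terms summable up to a logarithmic factor. First I would establish the one-step descent of $\mathcal{L}_{\rho_k}(x_{k+1}, y_{k+1}, \lambda_k)$: using the optimality of the $y$-subproblem gives a $-\frac{1}{2}\|y_{k+1}-y_k\|_H^2$ term; using the optimality of the linearized $x$-subproblem \eqref{update-x1} together with $L$-smoothness of $F$ gives a term of the form $-\left(\frac{\eta_k \phi_{\min}}{2} - \frac{L}{2}\right)\|x_{k+1}-x_k\|^2$ plus an inner-product error $\langle v_k - \nabla F(x_k), x_{k+1}-x_k\rangle$ coming from the fact that $v_k$ is only an estimate of $\nabla F(x_k)$; and the $\lambda$-update contributes a positive $\frac{1}{\rho_k}\|\lambda_{k+1}-\lambda_k\|^2$ term that must be absorbed.

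The second step is the standard multiplier-control lemma: using the full row/column rank of $A$ (Assumption~\ref{assum:rank-A}) and the $x$-update's optimality condition, I would bound $\|\lambda_{k+1}-\lambda_k\|^2 \lesssim \frac{1}{\sigma_A}\left(\|v_k - v_{k-1}\|^2 + \eta_k^2\|x_{k+1}-x_k\|^2 + \eta_{k-1}^2\|x_k - x_{k-1}\|^2 + \dots\right)$, so that the problematic $\frac{1}{\rho_k}\|\lambda_{k+1}-\lambda_k\|^2$ term is dominated by the negative $\|x_{k+1}-x_k\|^2$ terms once $\rho_k$ is large enough (this is exactly where $c_\rho \geq 8L/\sigma_A + 160 L^2/\sigma_A^2 + \dots$ enters), plus a residual controlled by the estimator error. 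The third step is the momentum error recursion: from \eqref{update:qik} and Assumptions~\ref{assm:lipsciz} and \ref{assum:variance}, the quantity $\Delta_k := \mathbb{E}\|v_k - \nabla F(x_k)\|^2$ satisfies $\Delta_{k+1} \leq (1-a_{k+1})^2 \Delta_k + 2 a_{k+1}^2 \sigma^2 + 2 L^2 \mathbb{E}\|x_{k+1}-x_k\|^2$; with $a_{k+1} = c_a k^{-2/3}$ this unrolls to $\Delta_k \lesssim \sigma^2 k^{-2/3} + \sum_{j} (\text{products})\, L^2 \mathbb{E}\|x_{j+1}-x_j\|^2$, and the key is that the accumulated coefficient of each $\mathbb{E}\|x_{j+1}-x_j\|^2$ is $O(a_j^{-1}) = O(k^{2/3})$, which, after multiplying by the step size factor, is reabsorbed into the $\eta_j \asymp k^{1/3}$ negative terms — this forces $c_\eta \leq \sigma_A c_\rho / (\sqrt{160}\phi_{\max})$ and the lower bound on $c_a$.

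I would then combine these into the potential $\Phi_k = \mathcal{L}_{\rho_k}(x_k, y_k, \lambda_k) + \alpha_k \|v_{k-1} - \nabla F(x_{k-1})\|^2 + \beta_k \|x_k - x_{k-1}\|^2$ for suitable $\alpha_k \asymp 1/a_k \asymp k^{2/3}$ and $\beta_k$, show $\Phi_k$ is bounded below using Assumption~\ref{assum:low-bound} (and boundedness of $\lambda_k/\rho_k$ via the gradient bound Assumption~\ref{assum:bound-grad}), and telescope $\mathbb{E}[\Phi_{k+1} - \Phi_k] \leq -c\left(\eta_k \mathbb{E}\|x_{k+1}-x_k\|^2 + \mathbb{E}\|y_{k+1}-y_k\|_H^2\right) + C\sigma^2 a_{k+1}^2/a_k$ over $k=1,\dots,K$. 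Since $\sum_{k=1}^K a_{k+1}^2/a_k \asymp \sum_k k^{-2/3} \asymp K^{1/3}$ and $\eta_k \asymp k^{1/3}$, dividing through yields $\min_k \mathbb{E}[\|x_{k+1}-x_k\|^2 + \|y_{k+1}-y_k\|^2] = \tilde{O}(K^{-2/3})$ after accounting for a $\log K$ factor from the harmonic-type sum $\sum \alpha_k a_{k+1}^2\sigma^2 \asymp \sum k^{-2/3} \cdot$ (bounded), giving $\mathcal{G}_1, \mathcal{G}_2, \mathcal{G}_3$ their logarithmic dependence. Finally I would translate this back to the KKT residual \eqref{eq:parial}: the feasibility term $Ax_k + By_k - c = \frac{1}{\rho_{k-1}}(\lambda_{k-1}-\lambda_k)$ is $\tilde O(K^{-2/3})$ in squared expectation by the multiplier bound, the $\partial_y L$ term is controlled by $\|y_{k+1}-y_k\|_H$, and the $\nabla_x L$ term is controlled by $\eta_k\|x_{k+1}-x_k\| + \|v_k - \nabla F(x_k)\|$, all $\tilde O(K^{-2/3})$; setting this $\leq \epsilon$ gives $K = \tilde O(\epsilon^{-3/2})$ and the per-iteration $O(1)$ sample cost gives the claimed oracle complexity.

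The main obstacle I anticipate is the bookkeeping in the third step: making the time-varying weights $\alpha_k$ in the potential function track $1/a_k$ precisely enough that (i) the cross term $\alpha_k \cdot 2L^2\mathbb{E}\|x_{k+1}-x_k\|^2$ from the estimator recursion is strictly dominated by the $\eta_k\phi_{\min}/2$ descent term uniformly in $k$ (not just asymptotically), and (ii) the "mismatch" term $(\alpha_{k+1}(1-a_{k+1})^2 - \alpha_k)\Delta_k$ stays nonpositive or summable — this requires $\alpha_{k+1}/\alpha_k \leq 1/(1-a_{k+1})^2 \approx 1 + 2a_{k+1}$, i.e. $\alpha_k$ may grow no faster than $\prod(1+2a_j) \asymp \exp(2\sum a_j) \asymp \exp(O(k^{1/3}))$, which is compatible with $\alpha_k \asymp k^{2/3}$ but needs care since the step-size and penalty schedules also drift. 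Getting the constants $c_\rho, c_a, c_\eta$ to satisfy all three coupled inequalities simultaneously — one from $\lambda$-control, one from estimator-error reabsorption, one from potential monotonicity — is the delicate part; everything else is a routine (if lengthy) telescoping argument.
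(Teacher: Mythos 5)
Your overall architecture is the same as the paper's: a Lyapunov function coupling the augmented Lagrangian with the momentum-estimator error, the multiplier-control bound $\|\lambda_{k+1}-\lambda_k\|^2 \lesssim \sigma_A^{-1}(\cdots)$ from the $x$-optimality condition, the recursion $\Delta_{k+1}\le(1-a_{k+1})^2\Delta_k+2a_{k+1}^2\sigma^2+2L^2\mathbb{E}\|x_{k+1}-x_k\|^2$, telescoping, and then translating the weighted sums into the three KKT residuals. However, there is a genuine quantitative gap in the one choice you identify as the crux: the weight on the estimator error. You set $\alpha_k \asymp 1/a_k \asymp k^{2/3}$, whereas the paper takes $\gamma_{k+1}=c_\gamma k^{1/3}$, i.e.\ growing at the same rate as $\eta_k$ and $\rho_k$. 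With your choice, the cross term fed back from the error recursion into the descent inequality is $2\alpha_{k+1}L^2(1-a_{k+1})^2\,\mathbb{E}\|x_{k+1}-x_k\|^2 \asymp k^{2/3}\,\mathbb{E}\|x_{k+1}-x_k\|^2$, while the total available negative coefficient is only $\eta_k\phi_{\min}+\tfrac{\sigma_A\rho_k}{2}-\cdots \asymp k^{1/3}$; the domination you require ``uniformly in $k$'' therefore fails for all large $k$, and no admissible constants $c_\rho, c_a, c_\eta$ can rescue it. Moreover, even setting that aside, your noise accumulation $\sum_k \alpha_{k+1}a_{k+1}^2\sigma^2 \asymp \sum_k k^{2/3-4/3}=\sum_k k^{-2/3}\asymp K^{1/3}$ is not a harmonic sum and is not logarithmic, contradicting your own claim that it yields the $\log K$ factor; carried through, it would degrade the final rate to roughly $O(K^{-1/3})$.

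The repair is exactly the paper's parameterization: take the weight $\asymp k^{1/3}$. Then (i) the mismatch coefficient of $\|\varepsilon_k\|^2$ is $\gamma_{k+1}(1-a_{k+1})^2-\gamma_k+\tfrac{\nu_k}{2}+\tfrac{10}{\rho_k\sigma_A}\le -\tfrac{c_ac_\gamma}{2}k^{-1/3}$, which is only $-O(k^{-1/3})$ but that is precisely enough because the positive terms it must absorb are themselves $O(k^{-1/3})$; (ii) the cross term is $O(k^{1/3})$ and is dominated by the descent via $c_\gamma\le \sigma_Ac_\rho/(16L^2)$; and (iii) the noise sum becomes $\sum_k \gamma_{k+1}a_{k+1}^2\sigma^2\asymp\sum_k k^{-1}\asymp\ln K$, which is the true source of the logarithmic factor in $\mathcal{G}_1,\mathcal{G}_2,\mathcal{G}_3$. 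The resulting telescoped bound controls $\sum_k k^{-1/3}\mathbb{E}\|\varepsilon_k\|^2$ and $\sum_k k^{1/3}\mathbb{E}\|x_{k+1}-x_k\|^2$ by $O(\ln K)$, and pulling out a single factor $K^{1/3}$ when bounding the residuals (which carry weights $\eta_k^2,\rho_k^2\asymp k^{2/3}$) gives the $K^{-2/3}\ln K$ rate. Your remaining steps (the lagged $\|x_{k-1}-x_k\|^2$ bookkeeping, the lower bound on the potential, and the KKT translation) are consistent with the paper.
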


As established in Theorem \ref{theorem:diminish}, when dynamic parameters are considered, our algorithm attains an oracle complexity of \( \tilde{\mathcal{O}}(\epsilon^{-\frac{3}{2}}) \), which matches the result in Theorem \ref{theorem:constant} up to an additional logarithmic factor. Notably, the result in Theorem \ref{theorem:diminish} eliminates the need for a condition on the sampling number in the initial iteration, i.e., \( m = \mathcal{O}(\epsilon^{-1/2}) \), requiring only \( m = \mathcal{O}(1) \).

\section{The proof of main results}
\subsection{Common lemmas}
This section gives some common lemmas, which is useful for the subsequent  analysis. Here, we will not add any restriction for parameters $a_k,\eta_k$ and $\rho_k$. 
\begin{lemma}[\cite{xu2015augmented}, Lemma 2]\label{lem:curr}

Let $u_k$ and $w_k$ be two positive scalar sequences such that for all $k\geq 1$
\begin{equation}\label{2}
  u_{k} \leq \eta u_{k-1} + w_{k-1},
\end{equation}
where $\eta\in (0,1)$ is the decaying factor.  Then we have 
\begin{equation}\label{22}
       \sum_{k=0}^K u_k \leq \frac{u_0}{1-\eta} +  \frac{1}{1-\eta}  \sum_{k=0}^{K-1} w_{k}. 
\end{equation}
\end{lemma}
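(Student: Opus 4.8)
The plan is to exploit that \eqref{2} is a one-step linear recursion with a strictly contractive coefficient, so that simply summing the inequality over the index $k$ already telescopes into the desired bound. First I would sum \eqref{2} from $k=1$ to $K$ and reindex:
\begin{equation}\nonumber
\sum_{k=1}^K u_k \;\leq\; \eta \sum_{k=1}^K u_{k-1} + \sum_{k=1}^K w_{k-1} \;=\; \eta \sum_{k=0}^{K-1} u_k + \sum_{k=0}^{K-1} w_k .
\end{equation}
Introducing $S_K := \sum_{k=0}^K u_k$, the left-hand side equals $S_K - u_0$ and the first term on the right equals $\eta\,(S_K - u_K)$, so
\begin{equation}\nonumber
S_K - u_0 \;\leq\; \eta S_K - \eta u_K + \sum_{k=0}^{K-1} w_k \;\leq\; \eta S_K + \sum_{k=0}^{K-1} w_k ,
\end{equation}
where the last inequality drops the term $-\eta u_K$ using that the sequence $u_k$ is nonnegative. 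Rearranging and dividing by $1-\eta>0$ yields exactly \eqref{22}.

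For completeness I would note an alternative derivation: unrolling \eqref{2} gives $u_k \leq \eta^k u_0 + \sum_{j=0}^{k-1}\eta^{\,k-1-j} w_j$ for every $k$; summing this over $k=0,\dots,K$, interchanging the order of the double summation, and bounding each resulting partial geometric sum $\sum_{i\geq 0}\eta^i$ by $1/(1-\eta)$ produces the same constant. Either route is short, so I would present only the first.

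There is essentially no real obstacle here: the statement is elementary and is quoted from \cite{xu2015augmented}. The only points that genuinely need care are the nonnegativity of the sequence $u_k$ (so that discarding $-\eta u_K$ is legitimate) and the hypothesis $\eta\in(0,1)$ (so that $1-\eta>0$ and dividing by it preserves the direction of the inequality). Accordingly, I would keep the proof to the few lines above.
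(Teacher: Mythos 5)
Your proof is correct. The paper itself does not prove this lemma --- it simply imports it as Lemma 2 of \cite{xu2015augmented} --- so there is no internal argument to compare against; your summation-and-rearrangement derivation is the standard one, it is complete, and you correctly flag the only two points that matter (nonnegativity of $u_K$ to drop $-\eta u_K$, and $\eta\in(0,1)$ to divide by $1-\eta$). Nothing needs to be added.
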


\begin{lemma}\label{lem:diff-nablaf-v}
   Suppose that Assumptions \ref{assm:lipsciz}-\ref{assum:variance} hold, and define 
$\varepsilon_k: = \nabla f(x_k) - v_k$.  Algorithm \ref{alg:sam} generates stochastic gradient $\left\{v_k\right\}$ satisfies
$$
\mathbb{E}[\left\|\varepsilon_k\right\|^2] \leq \left(1-a_k\right)^2 \mathbb{E}[\left\|\varepsilon_{k-1}\right\|^2]+2a_k^2 \sigma^2+2 L^2\left(1-a_k\right)^2 \mathbb{E}[\left\|x_k-x_{k-1}\right\|^2].
$$

\end{lemma}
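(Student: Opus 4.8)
\textbf{Proof proposal for Lemma~\ref{lem:diff-nablaf-v}.}

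The plan is to expand $\varepsilon_k = \nabla f(x_k) - v_k$ using the recursion \eqref{update:qik} for $v_k$, then take conditional expectation with respect to the fresh sample $\xi_k$ and use the noise and smoothness assumptions. First I would substitute $v_k = \nabla f(x_k,\xi_k) + (1-a_k)(v_{k-1} - \nabla f(x_{k-1},\xi_k))$ into $\varepsilon_k$, and add and subtract $(1-a_k)\nabla f(x_{k-1})$ together with $(1-a_k)\nabla f(x_k)$ so as to group the terms into (i) a ``previous error'' piece $(1-a_k)\varepsilon_{k-1}$ and (ii) a mean-zero stochastic piece. Concretely, writing $\theta_k := \big(\nabla f(x_k,\xi_k) - \nabla f(x_k)\big) - (1-a_k)\big(\nabla f(x_{k-1},\xi_k) - \nabla f(x_{k-1})\big)$, a short computation gives the identity
\begin{equation}\nonumber
\varepsilon_k = (1-a_k)\,\varepsilon_{k-1} - \theta_k - a_k\big(\nabla f(x_k,\xi_k) - \nabla f(x_k)\big),
\end{equation}
or some equivalent regrouping; the key structural point is that conditioned on the filtration $\mathcal{F}_{k-1}$ (everything up to and including $x_k$), the variable $\xi_k$ is independent, so the stochastic part has zero conditional mean while $\varepsilon_{k-1}$ and $\|x_k - x_{k-1}\|$ are $\mathcal{F}_{k-1}$-measurable.

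Next I would take $\mathbb{E}[\|\cdot\|^2 \mid \mathcal{F}_{k-1}]$. Because the deterministic part $(1-a_k)\varepsilon_{k-1}$ is orthogonal (in conditional expectation) to the mean-zero stochastic part, the cross term vanishes and we get
$\mathbb{E}[\|\varepsilon_k\|^2\mid\mathcal{F}_{k-1}] = (1-a_k)^2\|\varepsilon_{k-1}\|^2 + \mathbb{E}[\|\text{stochastic part}\|^2\mid\mathcal{F}_{k-1}]$.
For the stochastic part, I would split it as $a_k\big(\nabla f(x_k,\xi_k)-\nabla f(x_k)\big)$ plus $(1-a_k)\big[(\nabla f(x_k,\xi_k)-\nabla f(x_k)) - (\nabla f(x_{k-1},\xi_k)-\nabla f(x_{k-1}))\big]$. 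Using $(u+v)^2 \le 2\|u\|^2 + 2\|v\|^2$, Assumption~\ref{assum:variance} bounds the first by $2a_k^2\sigma^2$; for the second, the ``variance-reduction'' trick applies: $\mathbb{E}\|Z - \mathbb{E}Z\|^2 \le \mathbb{E}\|Z\|^2$ with $Z = \nabla f(x_k,\xi_k) - \nabla f(x_{k-1},\xi_k)$, and then Assumption~\ref{assm:lipsciz} (used in its squared form, or via Jensen from the stated $\mathbb{E}\|\cdot\|\le L\|x-y\|$ form) gives $\mathbb{E}\|Z\|^2 \le L^2\|x_k - x_{k-1}\|^2$, so this term is at most $2(1-a_k)^2 L^2\|x_k - x_{k-1}\|^2$. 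Finally I would take total expectation over $\mathcal{F}_{k-1}$ to obtain the claimed inequality.

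The main obstacle I anticipate is bookkeeping rather than depth: getting the add-and-subtract decomposition exactly right so that the deterministic remainder is precisely $(1-a_k)\varepsilon_{k-1}$ with no leftover bias term, and making sure the conditioning is set up so the cross term genuinely vanishes. A secondary subtlety is that Assumption~\ref{assm:lipsciz} is stated for $\mathbb{E}\|\nabla f(x,\xi)-\nabla f(y,\xi)\|$ (first moment), whereas we need the second moment $\mathbb{E}\|\nabla f(x,\xi)-\nabla f(y,\xi)\|^2 \le L^2\|x-y\|^2$; I would either read Assumption~\ref{assm:lipsciz} as holding pointwise in $\xi$ (so that squaring is immediate) or note that the paper intends the almost-sure $L$-smoothness of $f(\cdot,\xi)$, which is the standard convention and is what makes the $L^2\|x_k-x_{k-1}\|^2$ bound legitimate. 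Everything else is a routine application of $\|u+v\|^2\le 2\|u\|^2+2\|v\|^2$ and the tower property.
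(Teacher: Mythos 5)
Your proposal is correct and follows essentially the same route as the paper's proof: the same add-and-subtract decomposition into $(1-a_k)\varepsilon_{k-1}$ plus a conditionally mean-zero stochastic part, the same use of conditional orthogonality, $\|u+v\|^2\le 2\|u\|^2+2\|v\|^2$, Assumption~\ref{assum:variance}, the inequality $\mathbb{E}\|Z-\mathbb{E}Z\|^2\le\mathbb{E}\|Z\|^2$, and the Lipschitz bound. The only blemish is the displayed identity, where the piece $a_k\bigl(\nabla f(x_k,\xi_k)-\nabla f(x_k)\bigr)$ is already contained in your $\theta_k$ and so should not appear again separately (the correct identity is simply $\varepsilon_k=(1-a_k)\varepsilon_{k-1}-\theta_k$), but you flag this yourself and the split you actually use to bound the stochastic part is exactly the paper's; your remark that Assumption~\ref{assm:lipsciz} must be read as almost-sure smoothness to justify the second-moment bound is also well taken.
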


\begin{proof}
   Let us denote $\mathcal{F}_k = \{\xi_0,\xi_1,\cdots,\xi_{k-1} \}$. From the definition of $\varepsilon_k$, we can write
$$
\begin{aligned}
\mathbb{E}\left[\|\varepsilon_k\|^2 \vert \mathcal{F}_k \right]= & \mathbb{E}\left[\|\nabla f\left(x_k, \xi_k\right)+\left(1-a_k\right)\left(v_{k-1}-\nabla f\left(x_{k-1}, \xi_k\right)\right)-\nabla f\left(x_k\right)\|^2 \vert \mathcal{F}_k \right] \\
= & \mathbb{E}\left[  \|a_k\left(\nabla f\left(x_k, \xi_k\right)-\nabla f\left(x_k\right)\right)+\left(1-a_k\right)\left(v_{k-1}-\nabla f\left(x_{k-1}\right)\right) \right. \\
&  \left.+\left(1-a_k\right)\left(\nabla f\left(x_k, \xi_k\right)-\nabla f\left(x_{k-1}, \xi_k\right)-\nabla f\left(x_k\right)+\nabla f\left(x_{k-1}\right)\right) \vert \mathcal{F}_k \|^2 \right] \\
\leq & \left(1-a_k\right)^2   \left\|\varepsilon_{k-1}\right\|^2 +2a_k^2 \mathbb{E}[ \left\|\nabla f\left(x_k, \xi_k\right)-\nabla f\left(x_k\right)\right\|^2 \vert \mathcal{F}_k ] \\
& +2\left(1-a_k\right)^2 \mathbb{E}[ \left\|\nabla f\left(x_k, \xi_k\right)-\nabla f\left(x_{k-1}, \xi_k\right)-\nabla f\left(x_k\right)+\nabla f\left(x_{k-1}\right)\right\|^2 \vert \mathcal{F}_k] \\
\leq & \left(1-a_k\right)^2 \left\|\varepsilon_{k-1}\right\|^2 +2a_k^2 \sigma^2+2\left(1-a_k\right)^2 \mathbb{E}[ \left\|\nabla f\left(x_k, \xi_k\right)-\nabla f\left(x_{k-1}, \xi_k\right)\right\|^2 \vert \mathcal{F}_k] \\
\leq & \left(1-a_k\right)^2 
 \left\|\varepsilon_{k-1}\right\|^2 +2a_k^2 \sigma^2+2 L^2\left(1-a_k\right)^2 \left\|x_k-x_{k-1}\right\|^2.
\end{aligned}
$$
where the first inequality uses unbiasedness of stochastic gradient $\nabla f(x_k,\xi_k)$ and $\|a+b\|^2 \leq 2\|a\|^2 + 2\|b\|^2$, the second inequality follows from Assumptions \ref{assum:variance} and  $\mathbb{E}\|x-\mathbb{E}(x)\|^2 \leq \mathbb{E}\|x\|^2$, the last inequality follows from  Assumption \ref{assm:lipsciz}. The conclusion of this lemma follows from taking expectation on both sides of this inequality.
\end{proof}

 First, given the sequence $\left\{x_k, y_k, \lambda_k\right\}_{k=1}^K$ generated by Algorithm \ref{alg:sam}, we give the upper bound of $\mathbb{E}\left\|\lambda_{k+1}-\lambda_k\right\|^2$.
\begin{lemma}
Let Assumptions \ref{assm:lipsciz}-\ref{assum:variance} hold. Suppose the sequence $\left\{x_k, y_k, \lambda_k\right\}_{k=1}^K$ is generated by the Algorithm \ref{alg:sam}. The following inequality holds
\begin{equation}\label{eq:diff-lambda-bound2}
\begin{aligned}
\mathbb{E}\left\|\lambda_{k+1}-\lambda_k\right\|^2 \leq & \frac{5}{\sigma_A} \mathbb{E}\left\|v_k-\nabla f\left(x_k\right)\right\|^2+\frac{5}{\sigma_A} \mathbb{E}\left\|v_{k-1}-\nabla f\left(x_{k-1}\right)\right\|^2+\frac{5 \eta_k^2 \phi_{\max }^2}{\sigma_A}\mathbb{E}[\left\|x_k-x_{k+1}\right\|^2] \\
& +\frac{5\left(L^2+\eta_{k-1}^2 \phi_{\max }^2\right)}{\sigma_A}\left\|x_{k-1}-x_k\right\|^2.
\end{aligned}
\end{equation}
where $\sigma_A$ denotes the smallest eigenvalues of matrix $A A^T$, and $\phi_{\max }$ denotes the largest eigenvalues of positive definite matrix $Q$.
\end{lemma}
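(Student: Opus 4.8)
The plan is to exploit the first-order optimality condition of the $x$-subproblem together with the multiplier update to express $A^\top(\lambda_{k+1}-\lambda_k)$ solely in terms of the gradient estimators $v_k,v_{k-1}$ and the iterate differences, and then to invert this relation using the full-rank property of $A$.

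First I would write down the stationarity condition for $x_{k+1} = \arg\min_x \hat{\mathcal{L}}_{\rho_k}(x,y_{k+1},\lambda_k,v_k,x_k)$. Differentiating \eqref{func:alf} in $x$ gives
$$v_k + \eta_k Q(x_{k+1}-x_k) - A^\top\lambda_k + \rho_k A^\top\!\left(Ax_{k+1}+By_{k+1}-c\right) = 0.$$
Now substitute the $\lambda$-update $\rho_k(Ax_{k+1}+By_{k+1}-c) = \lambda_k-\lambda_{k+1}$ into this; the $A^\top\lambda_k$ terms cancel and we are left with the key identity $A^\top\lambda_{k+1} = v_k + \eta_k Q(x_{k+1}-x_k)$. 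Taking the difference of this identity at indices $k$ and $k-1$ (valid since $k\ge 1$, as in the lemma's range) yields
$$A^\top(\lambda_{k+1}-\lambda_k) = (v_k - v_{k-1}) + \eta_k Q(x_{k+1}-x_k) - \eta_{k-1} Q(x_k - x_{k-1}).$$

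Next, invoking the relevant case of Assumption \ref{assum:rank-A} (so that $AA^\top\succ0$ and hence $\|z\|^2 \le \sigma_A^{-1}\|A^\top z\|^2$ for all $z$), I would apply this with $z=\lambda_{k+1}-\lambda_k$ and split $v_k - v_{k-1} = (v_k - \nabla f(x_k)) + (\nabla f(x_k)-\nabla f(x_{k-1})) + (\nabla f(x_{k-1})-v_{k-1})$, so that the right-hand side becomes a sum of five vectors. Using $\|\sum_{i=1}^5 a_i\|^2 \le 5\sum_{i=1}^5\|a_i\|^2$, then bounding $\|\nabla f(x_k)-\nabla f(x_{k-1})\|^2 \le L^2\|x_k-x_{k-1}\|^2$ via Assumption \ref{assm:lipsciz} (Jensen applied to the stated expectation inequality) and $\|Q u\|^2 \le \phi_{\max}^2\|u\|^2$, and finally merging the two terms proportional to $\|x_k-x_{k-1}\|^2$, gives exactly \eqref{eq:diff-lambda-bound2} after taking total expectations.

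I do not anticipate a substantive obstacle: the argument is essentially algebraic. The only point requiring care is the bookkeeping that turns the subproblem optimality condition plus the multiplier update into the clean identity $A^\top\lambda_{k+1}=v_k+\eta_k Q(x_{k+1}-x_k)$, and correctly instantiating its $k-1$ version; once the five-term split is fixed, the constant $5$ and the precise groupings in \eqref{eq:diff-lambda-bound2} are forced.
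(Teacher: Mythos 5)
Your proposal is correct and follows essentially the same route as the paper: both derive the identity $A^\top\lambda_{k+1}=v_k-\eta_k Q(x_k-x_{k+1})$ from the $x$-subproblem optimality condition combined with the multiplier update, difference it at consecutive indices, apply $\|z\|^2\le\sigma_A^{-1}\|A^\top z\|^2$, and use the same five-term splitting with $L$-smoothness and the bound $\|Qu\|^2\le\phi_{\max}^2\|u\|^2$. No substantive differences.
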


\begin{proof}
     By the optimal condition of step 6 in Algorithm \ref{alg:sam}, we have
$$
\begin{aligned}
0 & =v_k-A^T \lambda_k+\rho A^T\left(A x_{k+1}+B y_{k+1}-c\right)-\eta_k Q\left(x_k-x_{k+1}\right) \\
& =v_k-A^T \lambda_{k+1}-\eta_k Q\left(x_k-x_{k+1}\right),
\end{aligned}
$$
where the second equality is due to step 7 in Algorithm \ref{alg:sam}. Thus, we have
\begin{equation}\label{eq:ALambda}
    A^T \lambda_{k+1}=v_k-\eta_k Q\left(x_k-x_{k+1}\right).
\end{equation}
By \eqref{eq:ALambda}, we have
\begin{equation}\label{eq:diff-lambda-bound}
\begin{aligned}
& \left\|\lambda_{k+1}-\lambda_k\right\|^2 \leq \sigma_A^{-1}\left\|A^T \lambda_{k+1}-A^T \lambda_k\right\|^2 \\
& \leq \sigma_A^{-1}\left\|v_k - v_{k-1}-\eta_k Q\left(x_k-x_{k+1}\right)+\eta_{k-1} Q\left(x_{k-1}-x_k\right)\right\|^2 \\
& =\sigma_A^{-1}\left\|v_k-\nabla f\left(x_k\right)+\nabla f\left(x_k\right)-\nabla f\left(x_{k-1}\right)+\nabla f\left(x_{k-1}\right)-v_{k-1}-\eta_k Q\left(x_k-x_{k+1}\right)+\eta_{k-1} Q\left(x_{k-1}-x_k\right)\right\|^2 \\
& \leq \frac{5}{\sigma_A}\left\|v_k-\nabla f\left(x_k\right)\right\|^2+\frac{5}{\sigma_A}\left\|v_{k-1}-\nabla f\left(x_{k-1}\right)\right\|^2+\frac{5 \eta_k^2 \phi_{\max }^2}{\sigma_A}\left\|x_k-x_{k+1}\right\|^2 \\
& \quad+\frac{5\left(L^2+\eta_{k-1}^2 \phi_{\max }^2\right)}{\sigma_A}\left\|x_{k-1}-x_k\right\|^2
\end{aligned}
\end{equation}
where the last inequality follows from the Assumption \ref{assm:lipsciz} and $\|Q(x-y)\|^2 \leq \phi_{\max }^2\|x-y\|^2$, where $\phi_{\max }$ denotes the largest eigenvalue of positive matrix $Q$. Taking expectation conditioned on information $\xi_k$ to \eqref{eq:diff-lambda-bound}, we complete the proof.

\end{proof}

\begin{lemma}
 Suppose that Assumptions \ref{assm:lipsciz}-\ref{assum:variance} hold. Let the sequence $\left\{x_k, y_k, \lambda_k\right\}_{k=1}^K$ be generated by Algorithm \ref{alg:sam}. Then
    \begin{equation}\label{eq:single-descent}
\begin{aligned}
\mathbb{E}\left[\mathcal{L}_{\rho_{k+1}}\left(x_{k+1}, y_{k+1}, \lambda_{k+1}\right)\right] \leq & \mathbb{E}\left[\mathcal{L}_{\rho_k}\left(x_k, y_k, \lambda_k\right)\right]+(\frac{1}{{\rho_k}}+\frac{\rho_{k+1} - \rho_k}{2 \rho_k^2}) \mathbb{E} [\left\|\lambda_{k+1}-\lambda_k\right\|^2] + \frac{\nu_k}{2}\mathbb{E}[\|v_k-\nabla f\left(x_k\right)\|^2] \\
&- \sigma_{\min}(H) \mathbb{E}[\|y_{k+1} - y_k\|^2]-\left(\eta_k \phi_{\min }+\frac{\sigma_A {\rho_k}}{2}-\frac{L}{2} - \frac{1}{2\nu_k}\right)\mathbb{E}[ \left\|x_{k+1}-x_k\right\|^2],
\end{aligned}
\end{equation}
where $\phi_{\min }$ denote the smallest eigenvalue of $Q$, $\nu_k>0$ is any positive real number and $\sigma_A$ denote the smallest eigenvalues of matrix $AA^\top$.

\end{lemma}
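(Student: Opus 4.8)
The plan is to establish the descent inequality \eqref{eq:single-descent} by decomposing the change in the augmented Lagrangian across one iteration into three parts — the change due to the $\lambda$-update, the change due to the $y$-update, and the change due to the $x$-update — plus a term accounting for the change in the penalty parameter from $\rho_k$ to $\rho_{k+1}$, and then bounding each part in turn. I would write
\begin{align*}
& \mathcal{L}_{\rho_{k+1}}(x_{k+1},y_{k+1},\lambda_{k+1}) - \mathcal{L}_{\rho_k}(x_k,y_k,\lambda_k) \\
={}& \big[\mathcal{L}_{\rho_{k+1}}(x_{k+1},y_{k+1},\lambda_{k+1}) - \mathcal{L}_{\rho_k}(x_{k+1},y_{k+1},\lambda_{k+1})\big] + \big[\mathcal{L}_{\rho_k}(x_{k+1},y_{k+1},\lambda_{k+1}) - \mathcal{L}_{\rho_k}(x_{k+1},y_{k+1},\lambda_k)\big] \\
& + \big[\mathcal{L}_{\rho_k}(x_{k+1},y_{k+1},\lambda_k) - \mathcal{L}_{\rho_k}(x_k,y_{k+1},\lambda_k)\big] + \big[\mathcal{L}_{\rho_k}(x_k,y_{k+1},\lambda_k) - \mathcal{L}_{\rho_k}(x_k,y_k,\lambda_k)\big],
\end{align*}
and treat the four brackets separately.

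For the first bracket, only the quadratic penalty term depends on $\rho$, so the difference equals $\tfrac{\rho_{k+1}-\rho_k}{2}\|Ax_{k+1}+By_{k+1}-c\|^2 = \tfrac{\rho_{k+1}-\rho_k}{2\rho_k^2}\|\lambda_{k+1}-\lambda_k\|^2$ using the $\lambda$-update $\lambda_{k+1}-\lambda_k = -\rho_k(Ax_{k+1}+By_{k+1}-c)$. For the second bracket, only the inner-product term depends on $\lambda$, giving $-\langle \lambda_{k+1}-\lambda_k,\, Ax_{k+1}+By_{k+1}-c\rangle = \tfrac{1}{\rho_k}\|\lambda_{k+1}-\lambda_k\|^2$; together these produce the coefficient $\tfrac{1}{\rho_k}+\tfrac{\rho_{k+1}-\rho_k}{2\rho_k^2}$ on $\|\lambda_{k+1}-\lambda_k\|^2$. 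For the fourth bracket (the $y$-update), I would use that $y_{k+1}$ is the exact minimizer of the $\sigma_{\min}(H)$-strongly convex (in fact $H$-strongly convex) function $y\mapsto \mathcal{L}_{\rho_k}(x_k,y,\lambda_k)+\tfrac12\|y-y_k\|_H^2$, so comparing its value at $y_{k+1}$ and $y_k$ gives $\mathcal{L}_{\rho_k}(x_k,y_{k+1},\lambda_k) - \mathcal{L}_{\rho_k}(x_k,y_k,\lambda_k) \le -\|y_{k+1}-y_k\|_H^2 \le -\sigma_{\min}(H)\|y_{k+1}-y_k\|^2$ (the standard "strong convexity at the minimizer" argument, absorbing the proximal term).

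The third bracket (the $x$-update) is the crux and the main obstacle. Here I would first use $L$-smoothness of $F$ to write $F(x_{k+1}) \le F(x_k) + \langle \nabla f(x_k), x_{k+1}-x_k\rangle + \tfrac L2\|x_{k+1}-x_k\|^2$, then split $\nabla f(x_k) = v_k + (\nabla f(x_k)-v_k)$ and handle the error term by Young's inequality: $\langle \nabla f(x_k)-v_k, x_{k+1}-x_k\rangle \le \tfrac{\nu_k}{2}\|v_k-\nabla f(x_k)\|^2 + \tfrac{1}{2\nu_k}\|x_{k+1}-x_k\|^2$. The remaining piece $\langle v_k, x_{k+1}-x_k\rangle$ plus the penalty/linear terms in $x$ must be controlled using the optimality condition for $x_{k+1}$ from \eqref{update-x1}, namely $0 = v_k - A^\top\lambda_k + \rho_k A^\top(Ax_{k+1}+By_{k+1}-c) + \eta_k Q(x_{k+1}-x_k)$; pairing this with $x_{k+1}-x_k$ and combining with the quadratic term $\tfrac{\rho_k}{2}\|Ax_{k+1}+By_{k+1}-c\|^2$ yields a three-point/cosine-rule identity that produces $-\eta_k\|x_{k+1}-x_k\|_Q^2 \le -\eta_k\phi_{\min}\|x_{k+1}-x_k\|^2$ together with a term $-\tfrac{\rho_k}{2}\|A(x_{k+1}-x_k)\|^2 \le -\tfrac{\sigma_A\rho_k}{2}\|x_{k+1}-x_k\|^2$ (this is where Assumption~\ref{assum:rank-A} and $\sigma_A = \sigma_{\min}(AA^\top)$ enter, controlling $\|A(x_{k+1}-x_k)\|^2 \ge \sigma_A\|x_{k+1}-x_k\|^2$). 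Collecting the $\|x_{k+1}-x_k\|^2$ coefficients $-\eta_k\phi_{\min} - \tfrac{\sigma_A\rho_k}{2} + \tfrac L2 + \tfrac{1}{2\nu_k}$ and the error coefficient $\tfrac{\nu_k}{2}$, then taking expectations (the only randomness being in $v_k$, which is already bracketed) gives exactly \eqref{eq:single-descent}. The delicate bookkeeping is in the cross terms of the cosine-rule identity for the $x$-step and in making sure the $By_{k+1}-c$ contributions cancel cleanly between the optimality condition and the quadratic penalty — I expect that to be the part requiring the most care.
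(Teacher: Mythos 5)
Your proposal is correct and follows essentially the same route as the paper: the same four-way decomposition (penalty change, $\lambda$-step, $x$-step, $y$-step), the same strong-convexity argument at the $y$-minimizer giving $-\sigma_{\min}(H)\|y_{k+1}-y_k\|^2$, and the same treatment of the $x$-step via the optimality condition paired with $x_{k+1}-x_k$, $L$-smoothness, Young's inequality with parameter $\nu_k$, the cosine rule on $\rho_k(Ax_k-Ax_{k+1})^\top(Ax_{k+1}+By_{k+1}-c)$, and the eigenvalue bounds through $\phi_{\min}$ and $\sigma_A$. No substantive differences to report.
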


\begin{proof}
By the step 7 in Algorithm \ref{alg:sam}, we have 
\begin{equation}\label{eq:descent-y}
    \mathcal{L}_{{\rho_k}}(x_k,y_{k+1},\lambda_k) \leq \mathcal{L}_{{\rho_k}}(x_k,y_k,\lambda_k) - \sigma_{\min}(H) \|y_{k+1} - y_k\|^2.
\end{equation}
By the optimal condition of step 8 in Algorithm \ref{alg:sam}, we have
\begin{equation}\label{eq:lip-descent}
\begin{aligned}
& 0=\left(x_k-x_{k+1}\right)^T\left[v_k-A^T \lambda_k+{\rho_k}\left(A x_{k+1}+B y_{k+1}-c\right)-\eta_k Q\left(x_k-x_{k+1}\right)\right] \\
& =\left(x_k-x_{k+1}\right)^T\left[v_k-\nabla f\left(x_k\right)+\nabla f\left(x_k\right)-A^T \lambda_k+{\rho_k} A^T\left(A x_{k+1}+B y_{k+1}-c\right)-\eta_k Q\left(x_k-x_{k+1}\right)\right] \\
& \stackrel{(i)}{\leq} f\left(x_k\right)-f\left(x_{k+1}\right)+\left(x_k-x_{k+1}\right)^T\left(v_k-\nabla f\left(x_k\right)\right)+\frac{L}{2}\left\|x_{k+1}-x_k\right\|^2-\eta_k\left\|x_{k+1}-x_k\right\|_Q^2 \\
& -\lambda_k^T\left(A x_k-A x_{k+1}\right)+{\rho_k}\left(A x_k-A x_{k+1}\right)^T\left(A x_{k+1}+B y_{k+1}-c\right) \\
& \stackrel{(i i)}{=} f\left(x_k\right)-f\left(x_{k+1}\right)+\left(x_k-x_{k+1}\right)^T\left(v_k-\nabla f\left(x_k\right)\right)+\frac{L}{2}\left\|x_{k+1}-x_k\right\|^2-\eta_k\left\|x_{k+1}-x_k\right\|_Q^2 \\
& -\lambda_k^T\left(A x_k+B y_{k+1}-c\right)+\lambda_k^T\left(A x_{k+1}+B y_{k+1}-c\right)+\frac{{\rho_k}}{2}\left\|A x_k+B y_{k+1}-c\right\|^2 \\
& -\frac{{\rho_k}}{2}\left\|A x_{k+1}+B y_{k+1}-c\right\|^2-\frac{{\rho_k}}{2}\left\|A x_k-A x_{k+1}\right\|^2 \\
& =\mathcal{L}_{\rho_k}\left(x_k, y_{k+1}, \lambda_k\right)-\mathcal{L}_{\rho_k}\left(x_{k+1}, y_{k+1}, \lambda_k\right)+\left(x_k-x_{k+1}\right)^T\left(v_k-\nabla f\left(x_k\right)\right) \\
& +\frac{L}{2}\left\|x_{k+1}-x_k\right\|^2-\eta_k\left\|x_{k+1}-x_k\right\|_Q^2-\frac{{\rho_k}}{2}\left\|A x_k-A x_{k+1}\right\|^2 \\
& \stackrel{(iii)}{\leq} \mathcal{L}_{\rho_k}\left(x_k, y_{k+1}, \lambda_k\right)-\mathcal{L}_{\rho_k}\left(x_{k+1}, y_{k+1}, \lambda_k\right)+ \frac{\nu_k}{2}\|v_k-\nabla f\left(x_k\right)\|^2 \\
& -\left(\eta_k \phi_{\min }+\frac{\sigma_A {\rho_k}}{2}-\frac{L}{2} - \frac{1}{2\nu_k} \right)\left\|x_k-x_{k+1}\right\|^2,
\end{aligned}
\end{equation}
where the inequality ($i$) holds by the Assumption \ref{assm:lipsciz}; the equality ($ii$) holds by using the equality $(a-b)^Tb=\frac{1}{2}\left(\|a\|^2-\|a-b\|^2-\|b\|^2\right)$ on the term ${\rho_k}\left(A x_k-A x_{k+1}\right)^T\left(A x_{k+1}+B y_{k+1}-c\right)$; the inequality ($iii$) holds by using $-\phi_{\min }\left\|x_{k+1}-x_k\right\|^2 \geq-\left\|x_{k+1}-x_k\right\|_Q^2$ and $-\sigma_A\left\|x_{k+1}-x_k\right\|^2 \geq-\| A x_k-$ $A x_{k+1} \|^2$. Then taking expectation conditioned on information $\xi_k$ to \eqref{eq:lip-descent},  we have
\begin{equation}\label{eq:descent-x}
\mathbb{E}\left[\mathcal{L}_{\rho_k}\left(x_{k+1}, y_{k+1}, \lambda_k\right)\right] \leq \mathbb{E}[\mathcal{L}_{\rho_k}\left(x_k, y_{k+1}, \lambda_k\right)]+ \frac{\nu_k}{2}\mathbb{E}[\|v_k-\nabla f\left(x_k\right)\|^2]-\left(\eta_k \phi_{\min }+\frac{\sigma_A {\rho_k}}{2}-\frac{L}{2} - \frac{1}{2\nu_k}\right) \mathbb{E}[\left\|x_{k+1}-x_k\right\|^2]
\end{equation}
By the step 9 of Algorithm \ref{alg:sam}, and taking expectation conditioned on information $\xi_k$, we have
\begin{equation}\label{eq:descent-lambda}
\begin{aligned}
& \mathbb{E}\left[\mathcal{L}_{\rho_k}\left(x_{k+1}, y_{k+1}, \lambda_{k+1}\right)-\mathcal{L}_{\rho_k}\left(x_{k+1}, y_{k+1}, \lambda_k\right)\right]=\frac{1}{{\rho_k}} \mathbb{E}\left\|\lambda_{k+1}-\lambda_k\right\|^2.
\end{aligned}
\end{equation}
In addition, replacing $\rho_k$ by $\rho_{k+1}$ in $\mathbb{E}\left[\mathcal{L}_{\rho_k}\left(x_{k+1}, y_{k+1}, \lambda_{k+1}\right)\right]$ yields 
\begin{equation}\label{eq:descent-rho}
\begin{aligned}
    \mathbb{E}\left[\mathcal{L}_{\rho_{k+1}}\left(x_{k+1}, y_{k+1}, \lambda_{k+1}\right)\right] & \leq  \mathbb{E}[\mathcal{L}_{\rho_k}\left(x_{k+1}, y_{k+1}, \lambda_{k+1}\right)] + \frac{\rho_{k+1} - \rho_k}{2} \| Ax_{k+1} +By_{k+1} - c \|^2\\
    & \leq  \mathbb{E}[\mathcal{L}_{\rho_k}\left(x_{k+1}, y_{k+1}, \lambda_{k+1}\right)] + \frac{\rho_{k+1} - \rho_k}{2 \rho_k^2} \| \lambda_{k+1} - \lambda_k \|^2.
    \end{aligned}
\end{equation}
Combining \eqref{eq:descent-y}, \eqref{eq:descent-x}, \eqref{eq:descent-lambda} with \eqref{eq:descent-rho} gives \eqref{eq:single-descent}. The proof is completed. 
\end{proof}

Finally, we give the upper bounds to the terms \eqref{eq:parial} in the optimality condition using $\|x_k - x_{k-1}\|^2$.
\begin{lemma}\label{lem:relation-optima-x}
    Suppose that Assumptions \ref{assm:lipsciz}-\ref{assum:variance} hold. Let the sequence $\left\{x_k, y_k, \lambda_k\right\}_{k=1}^K$ be generated by Algorithm \ref{alg:sam}. Then
    \begin{align}
        \left\|A^T \lambda_k-\nabla f\left(x_k\right)\right\|^2 & \leq 3  \|v_{k-1} - \nabla f(x_{k-1})\|^2   + 3(L^2 + \eta_{k-1}^2 \phi_{\max }^2)  \|x_k - x_{k-1}\|^2, \label{eq:bound-alambda} \\
        \operatorname{dist}^2\left(B^T \lambda_k, \partial h\left(y_k\right)\right) & \leq 2\rho_{k-1}^2\|B\|_2^2\|A\|_2^2\left\|x_k-x_{k-1}\right\|^2 + 2\sigma^2_{\max}(H) \|y_k - y_{k-1}\|^2,\label{eq:bound-partial}\\
        \left\|A x_{k}+B y_{k}-c\right\|^2 & =\frac{1}{\rho_{k-1}^2}\left\|\lambda_{k}-\lambda_{k-1}\right\|^2. \label{eq:bound-AB}
    \end{align}
\end{lemma}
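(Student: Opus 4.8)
The plan is to establish the three bounds separately; in each case I would take a first-order optimality condition of one of the subproblems of Algorithm~\ref{alg:sam} at iteration $k-1$ and then eliminate the multiplier using the dual update $\lambda_k = \lambda_{k-1} - \rho_{k-1}(Ax_k + By_k - c)$. No parameter restrictions are needed, and the only tools are Young's inequality, Assumption~\ref{assm:lipsciz}, and the operator-norm bounds $\|Qz\| \le \phi_{\max}\|z\|$, $\|B^T A\|_2 \le \|B\|_2\|A\|_2$, $\|H\| = \sigma_{\max}(H)$.

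Identity \eqref{eq:bound-AB} is immediate: rearranging the $\lambda$-update at iteration $k-1$ gives $Ax_k + By_k - c = -\tfrac{1}{\rho_{k-1}}(\lambda_k - \lambda_{k-1})$, and squaring yields the claim. For \eqref{eq:bound-alambda}, I would invoke \eqref{eq:ALambda}, which at iteration $k$ reads $A^T\lambda_k = v_{k-1} - \eta_{k-1}Q(x_{k-1}-x_k)$. Subtracting $\nabla f(x_k)$ and inserting $\pm\nabla f(x_{k-1})$ produces the three-term decomposition $A^T\lambda_k - \nabla f(x_k) = \bigl(v_{k-1}-\nabla f(x_{k-1})\bigr) + \bigl(\nabla f(x_{k-1})-\nabla f(x_k)\bigr) + \eta_{k-1}Q(x_k-x_{k-1})$. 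Applying $\|a+b+c\|^2 \le 3\|a\|^2+3\|b\|^2+3\|c\|^2$, bounding the middle term by $L\|x_k-x_{k-1}\|$ via Assumption~\ref{assm:lipsciz}, and the last term by $\eta_{k-1}\phi_{\max}\|x_k-x_{k-1}\|$, then collecting the last two contributions, gives \eqref{eq:bound-alambda}.

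The only step requiring care is \eqref{eq:bound-partial}. Writing the optimality condition of the $y$-subproblem (step~3) at iteration $k-1$ gives $0 \in \partial h(y_k) - B^T\lambda_{k-1} + \rho_{k-1}B^T(Ax_{k-1}+By_k-c) + H(y_k-y_{k-1})$. The subtlety — and the one point of genuine bookkeeping — is that this subproblem is evaluated at the \emph{previous} primal iterate $x_{k-1}$ whereas the $\lambda$-update involves $x_k$; so I would add and subtract $\rho_{k-1}B^TAx_k$ and use $\rho_{k-1}(Ax_k+By_k-c) = \lambda_{k-1}-\lambda_k$ to collapse $-B^T\lambda_{k-1} + \rho_{k-1}B^T(Ax_k+By_k-c) = -B^T\lambda_k$. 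This leaves $B^T\lambda_k + \rho_{k-1}B^TA(x_k-x_{k-1}) - H(y_k-y_{k-1}) \in \partial h(y_k)$, hence $\operatorname{dist}(B^T\lambda_k,\partial h(y_k))$ is at most $\|\rho_{k-1}B^TA(x_k-x_{k-1}) - H(y_k-y_{k-1})\|$; squaring, applying $\|a-b\|^2 \le 2\|a\|^2+2\|b\|^2$, and using the operator-norm bounds yields \eqref{eq:bound-partial}. Apart from aligning the $x$-argument of the $y$-subproblem with that of the $\lambda$-update, everything is routine algebra, so I anticipate no real obstacle.
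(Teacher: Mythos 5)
Your proposal is correct and follows essentially the same route as the paper: \eqref{eq:bound-AB} from the dual update, \eqref{eq:bound-alambda} from \eqref{eq:ALambda} with the three-term splitting and the bounds $L$, $\eta_{k-1}\phi_{\max}$, and \eqref{eq:bound-partial} from the $y$-subproblem optimality condition combined with the $\lambda$-update to collapse $B^T\lambda_{k-1}-\rho_{k-1}B^T(Ax_{k-1}+By_k-c)$ into $B^T\lambda_k$ plus the residual $\rho_{k-1}B^TA(x_k-x_{k-1})$. The bookkeeping point you flag about aligning $x_{k-1}$ with $x_k$ is exactly the step the paper's proof performs implicitly.
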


\begin{proof}

It follows from \eqref{eq:ALambda} that
$$
\begin{aligned}
& \left\|A^T \lambda_k-\nabla f\left(x_k\right)\right\|^2 \\
& =\left\|v_{k-1}-\nabla f\left(x_k\right)-\eta_{k-1} Q\left(x_{k-1}-x_k\right)\right\|^2 \\
& =\left\|v_{k-1}-\nabla f\left(x_{k-1}\right)+\nabla f\left(x_{k-1}\right)-\nabla f\left(x_k\right)-\eta_{k-1} Q\left(x_{k-1}-x_k\right)\right\|^2 \\
& \leq 3( [ \|v_{k-1} - \nabla f(x_{k-1})\|^2 ]  + (L^2 + \eta_{k-1}^2 \phi_{\max }^2)  \|x_k - x_{k-1}\|^2  )
\end{aligned}
$$
By step 7 of Algorithm \ref{alg:sam}, there exists a sub-gradient $\mu \in \partial h\left(y_k\right)$ such that
$$
\begin{aligned}
& \operatorname{dist}^2\left(B^T \lambda_k, \partial h\left(y_k\right)\right)  \leq\left\|\mu-B^T \lambda_k\right\|^2 \\
 = & \left\|B^T \lambda_{k-1}-{\rho_{k-1}} B^T\left(A x_{k-1}+B y_k-c\right) - H (y_k - y_{k-1}) -B^T \lambda_k\right\|^2 \\
\leq  & 2\rho_{k-1}^2\|B\|_2^2\|A\|_2^2\left\|x_k-x_{k-1}\right\|^2 + 2\sigma^2_{\max}(H) \|y_k - y_{k-1}\|^2.
\end{aligned}
$$
Finally, \eqref{eq:bound-AB} follows from the step 9 of Algorithm \ref{alg:sam}.
The proof is completed.

\end{proof}

\subsection{Proof of Section \ref{sec:constant}}\label{appen:constant}

We first show that the term $\sum_{k=1}^K\mathbb{E}[ \left\|x_{k+1}-x_k\right\|^2]$ can be bounded. 

\begin{lemma}\label{lem:xk-xk+1}
     Suppose that Assumptions \ref{assm:lipsciz}-\ref{assum:variance} hold. Let the sequence $\left\{x_k, y_k, \lambda_k\right\}_{k=1}^K$ be generated by Algorithm \ref{alg:sam} and 
     $$\rho_k \equiv \rho = c_{\rho}K^{1/3}, a_k \equiv a = c_{a}^2/\rho^2, \eta_k  \equiv \eta =  \frac{\phi_{\min } \rho \sigma_A }{20\phi_{\max}^2},~m = \lceil \rho\rceil,$$ where $\phi_{\min}$ and $\phi_{\max}$ denote the smallest and largest eigenvalues of positive definite matrix $Q$, $\sigma_A$ denotes the smallest eigenvalues of matrix $A A^T$, $c_{a}, c_{\rho}$ is two constants defined by
\begin{equation}\nonumber
    \begin{aligned}
        c_{a} & = \max\left\{ (\frac{1+2L^2}{2} + \frac{20L^2}{\sigma_A}) \frac{2}{\tau}, 1 \right\} ,\\
        c_{\rho} & = \max\{\frac{20L^2 + 2\sigma_A L }{\sigma_A\tau},1 \},
    \end{aligned}
\end{equation}
where $\tau  = \frac{ \phi_{\min }^2 \sigma_A }{40 \phi_{\max }^2}+\frac{\sigma_A }{2}$. Then we have that
     \begin{equation}\label{eq:bound:xkplus-xk}\sum_{k=0}^K \left( \mathbb{E}[ \left\|x_{k+1}-x_k\right\|^2] + \frac{4\sigma_{\min}(H)}{\tau \rho} \mathbb{E}[\|y_{k+1} - y_k\|^2] \right) \leq \frac{4(\mathcal{C}_1 + \psi_1-\psi_*)}{\tau c_{\rho}} K^{-1/3},\end{equation}
     where 
     $
\mathcal{C}_1 =      (\frac{c_{a}}{2} + \frac{10}{\sigma_A}) (\frac{\sigma^2}{c_{a}^2} + \frac{2c_{a}^2\sigma^2}{c_{\rho}^3}),~~\psi_k = \mathbb{E}\left[\mathcal{L}_\rho\left(x_k, y_k, \lambda_k\right)\right].
     $ and $\psi_*$ is a lower bound of $\psi_k$.
\end{lemma}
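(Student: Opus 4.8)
The plan is to build a potential (Lyapunov) function from the augmented Lagrangian $\psi_k = \mathbb{E}[\mathcal{L}_\rho(x_k,y_k,\lambda_k)]$ plus a correction term controlling the gradient-estimation error $\mathbb{E}\|\varepsilon_k\|^2 = \mathbb{E}\|v_k - \nabla f(x_k)\|^2$, and then telescope. Concretely, I would start from the one-step descent inequality \eqref{eq:single-descent} with the constant choices $\rho_k\equiv\rho$, $\eta_k\equiv\eta$, $a_k\equiv a$, so the $\rho_{k+1}-\rho_k$ term vanishes and \eqref{eq:single-descent} reads
\[
\psi_{k+1} \le \psi_k + \tfrac{1}{\rho}\mathbb{E}\|\lambda_{k+1}-\lambda_k\|^2 + \tfrac{\nu_k}{2}\mathbb{E}\|\varepsilon_k\|^2 - \sigma_{\min}(H)\mathbb{E}\|y_{k+1}-y_k\|^2 - \bigl(\eta\phi_{\min} + \tfrac{\sigma_A\rho}{2} - \tfrac{L}{2} - \tfrac{1}{2\nu_k}\bigr)\mathbb{E}\|x_{k+1}-x_k\|^2.
\]
The $\mathbb{E}\|\lambda_{k+1}-\lambda_k\|^2$ term is the enemy: it has the wrong sign, so I substitute the bound \eqref{eq:diff-lambda-bound2}, which converts it into $\mathbb{E}\|\varepsilon_k\|^2$, $\mathbb{E}\|\varepsilon_{k-1}\|^2$, $\mathbb{E}\|x_{k+1}-x_k\|^2$, and $\mathbb{E}\|x_k-x_{k-1}\|^2$ terms, all scaled by $1/\sigma_A$ and powers of $\eta,L$. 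After this substitution the $x$-difference coefficient becomes $\eta\phi_{\min} + \tfrac{\sigma_A\rho}{2} - \tfrac{L}{2} - \tfrac{1}{2\nu_k} - \tfrac{5\eta^2\phi_{\max}^2}{\sigma_A\rho}$ (the last from dividing \eqref{eq:diff-lambda-bound2} by $\rho$), plus a lagged contribution $\tfrac{5(L^2+\eta^2\phi_{\max}^2)}{\sigma_A\rho}\mathbb{E}\|x_{k-1}-x_k\|^2$ that I will absorb when telescoping.

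Next I introduce the corrected potential $\Phi_k := \psi_k + c\,\mathbb{E}\|\varepsilon_k\|^2$ for a suitable constant $c$ (of order $1/(\sigma_A a)$, i.e. order $\rho^2/(\sigma_A c_a^2)$), and use Lemma~\ref{lem:diff-nablaf-v} to bound $\mathbb{E}\|\varepsilon_k\|^2 \le (1-a)^2\mathbb{E}\|\varepsilon_{k-1}\|^2 + 2a^2\sigma^2 + 2L^2(1-a)^2\mathbb{E}\|x_k-x_{k-1}\|^2$. The point of the constant $c$ is that $c\bigl((1-a)^2 - 1\bigr)\mathbb{E}\|\varepsilon_{k-1}\|^2 \approx -ca\,\mathbb{E}\|\varepsilon_{k-1}\|^2$ must dominate the positive $\varepsilon$-coefficients ($\nu_k/2 + 10/(\sigma_A\rho)$ roughly) produced above, which forces $\nu_k$ to be chosen of order $1/a \sim \rho^2$ — but then $1/(2\nu_k)$ is tiny and harmless in the $x$-coefficient. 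With $c$ and $\nu_k$ fixed this way, summing $\Phi_{k+1}\le\Phi_k + (\text{constant})a^2\sigma^2 - \alpha\,\mathbb{E}\|x_{k+1}-x_k\|^2 - \sigma_{\min}(H)\mathbb{E}\|y_{k+1}-y_k\|^2$ over $k=0,\dots,K$ and checking that the net $x$-difference coefficient $\alpha$ is at least $\tau\rho/4$ — which is exactly where the definitions of $c_a$, $c_\rho$, $\tau$, and $\eta = \phi_{\min}\rho\sigma_A/(20\phi_{\max}^2)$ are engineered to work (note $\eta\phi_{\min} = \phi_{\min}^2\rho\sigma_A/(20\phi_{\max}^2)$ and $\sigma_A\rho/2$ sum to $\tau\rho$, and the subtracted terms $5\eta^2\phi_{\max}^2/(\sigma_A\rho)$, $5L^2/(\sigma_A\rho)$, $5\eta^2\phi_{\max}^2/(\sigma_A\rho)$ etc. are each $\le \tau\rho/8$ or so by the lower bounds on $c_\rho$) — gives the telescoped inequality
\[
\tfrac{\tau\rho}{4}\sum_{k=0}^K \mathbb{E}\|x_{k+1}-x_k\|^2 + \sigma_{\min}(H)\sum_{k=0}^K\mathbb{E}\|y_{k+1}-y_k\|^2 \le \Phi_0 - \Phi_{K+1} + (\text{const})\,K a^2\sigma^2.
\]

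Finally I identify the right-hand side with the claimed bound. The initial error satisfies $\mathbb{E}\|\varepsilon_0\|^2 \le \sigma^2/m \le \sigma^2/\rho$ by the mini-batch of size $m=\lceil\rho\rceil$ and Assumption~\ref{assum:variance}, so $\Phi_0 = \psi_0 + c\,\mathbb{E}\|\varepsilon_0\|^2 \le \psi_1' + O(\rho^2/(\sigma_A c_a^2)\cdot\sigma^2/\rho)$; writing $\psi_1$ for the relevant Lagrangian value and $\psi_*$ for its lower bound (finite by Assumption~\ref{assum:low-bound} together with the standard argument bounding $\mathcal{L}_\rho$ below via $A^T\lambda$-control), $\Phi_0 - \Phi_{K+1} \le (\psi_1 - \psi_*) + \mathcal{C}_1$, where $\mathcal{C}_1 = (\tfrac{c_a}{2}+\tfrac{10}{\sigma_A})(\tfrac{\sigma^2}{c_a^2}+\tfrac{2c_a^2\sigma^2}{c_\rho^3})$ collects precisely the $c\cdot\mathbb{E}\|\varepsilon_0\|^2$ contribution plus the accumulated noise $K a^2\sigma^2 = K(c_a^2/\rho^2)^2\sigma^2 = K c_a^4\sigma^2/(c_\rho^4 K^{4/3}) = c_a^4\sigma^2/(c_\rho^4 K^{1/3})$ — matching the $1/c_a^2$ and $c_a^2/c_\rho^3$ shapes once one tracks the constant $c$ carefully. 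Dividing through by $\tau\rho/4 = \tau c_\rho K^{1/3}/4$ and noting $\sigma_{\min}(H)/(\tau\rho/4) = 4\sigma_{\min}(H)/(\tau c_\rho K^{1/3})$ yields exactly \eqref{eq:bound:xkplus-xk}. The main obstacle is the simultaneous calibration in the middle step: one must choose $c$ and $\nu_k$ so that the $\varepsilon$-terms cancel \emph{and} the residual $x$-difference coefficient stays $\ge \tau\rho/4$ \emph{and} the lagged $\|x_{k-1}-x_k\|^2$ terms (from both \eqref{eq:diff-lambda-bound2} and Lemma~\ref{lem:diff-nablaf-v}) are absorbed by a single telescoping index shift — verifying these three inequalities hold under the stated formulas for $c_a$, $c_\rho$, $\eta$, $\tau$ is the real content, and everything else is bookkeeping.
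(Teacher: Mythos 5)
Your overall architecture is sound and is essentially the paper's argument in disguise: the paper sums the error recursion $\mathbb{E}\|\varepsilon_k\|^2$ separately (via Lemma~\ref{lem:curr} and Lemma~\ref{lem:diff-nablaf-v}) and plugs the total into the summed descent inequality, whereas you fold the same recursion into a per-step Lyapunov potential $\Phi_k=\psi_k+c\,\mathbb{E}\|\varepsilon_k\|^2$ (the device the paper reserves for the dynamic-parameter case in \eqref{def:Phi}); the two are equivalent after telescoping, and your accounting of $\mathbb{E}\|\varepsilon_0\|^2\le\sigma^2/m$ and of the accumulated noise $Ka^2\sigma^2$ does reproduce the shape of $\mathcal{C}_1$.

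However, there is a genuine error in the one step you yourself identify as ``the real content'': the calibration of $\nu_k$ and $c$. You claim $\nu_k$ must be of order $1/a\sim\rho^2$ so that $1/(2\nu_k)$ is ``tiny and harmless.'' This cannot work. The cancellation of the $\varepsilon$-terms requires $ca\gtrsim \nu_k/2+10/(\sigma_A\rho)$, while keeping the $x$-difference coefficient at $\tau\rho/4$ requires the added penalty $2cL^2$ to be $O(\rho)$, i.e.\ $c=O(\rho)$. Together these force $\nu_k\le 2ca/1 = O(\rho)\cdot c_a^2/\rho^2=O(1/\rho)$ --- exactly the paper's choice $\nu_k=c_a/\rho$ --- and then $1/(2\nu_k)=\rho/(2c_a)$ is \emph{not} tiny: it is a leading-order $O(\rho)$ term that survives only because $c_a\ge(\tfrac{1+2L^2}{2}+\tfrac{20L^2}{\sigma_A})\tfrac{2}{\tau}$ is engineered to absorb it into $\Gamma_2\ge\tau\rho/4$. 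With your stated $\nu_k\sim\rho^2$, either the coefficient $\nu_k/2\sim\rho^2$ on $\mathbb{E}\|\varepsilon_k\|^2$ overwhelms $-ca=O(1)$ (if you keep $c\sim 1/(\sigma_A a)$), or you must take $c\sim\rho^4$ and the term $-2cL^2\mathbb{E}\|x_{k+1}-x_k\|^2$ destroys the positivity of the $x$-coefficient for large $K$. Even your fallback $c\sim\rho^2/(\sigma_A c_a^2)$ gives $2cL^2\sim\rho^2$, which exceeds $\tau\rho$ once $\rho=c_\rho K^{1/3}$ is large. The fix is simply $\nu_k=c_a/\rho$ and $c=(\tfrac{c_a}{2}+\tfrac{10}{\sigma_A})\tfrac{\rho}{c_a^2}$, after which your telescoping yields the paper's $\Gamma_1=\mathcal{C}_1$ and $\Gamma_2\ge\tau\rho/4$ verbatim.
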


\begin{proof}
  Plugging \eqref{eq:diff-lambda-bound2} into \eqref{eq:single-descent} yields
   \begin{equation}\label{eq:descent-2}
\begin{aligned}
\mathbb{E}\left[\mathcal{L}_\rho\left(x_{k+1}, y_{k+1}, \lambda_{k+1}\right)\right] \leq & \mathbb{E}\left[\mathcal{L}_\rho\left(x_k, y_k, \lambda_k\right)\right]  + (\frac{\nu_k}{2}+ \frac{5}{\rho \sigma_A}  )\mathbb{E}[\|v_k-\nabla f\left(x_k\right)\|^2] + \frac{5}{\rho \sigma_A}\mathbb{E}[\|v_{k-1}-\nabla f\left(x_{k-1}\right)\|^2] \\
&- \sigma_{\min}(H) \mathbb{E}[\|y_{k+1} - y_k\|^2]-\left(\eta \phi_{\min }+\frac{\sigma_A \rho}{2}-\frac{L}{2} - \frac{1}{2\nu_k} - \frac{5 \eta^2 \phi_{\max }^2}{\rho\sigma_A}\right)\mathbb{E}[ \left\|x_{k+1}-x_k\right\|^2]\\
& +\frac{5\left(L^2+\eta^2 \phi_{\max }^2\right)}{\rho\sigma_A}\left\|x_{k-1}-x_k\right\|^2.
\end{aligned}
\end{equation}
Let us first focus on $\|v_k-\nabla f\left(x_k\right)\|^2$, it follows from Lemma \ref{lem:curr} and \ref{lem:diff-nablaf-v} that 
\begin{equation}\label{eq:sum-v-nablaf}
\begin{aligned}
    \sum_{k=0}^K \mathbb{E}[ \|v_k-\nabla f\left(x_k\right)\|^2 ] &  \leq \frac{\mathbb{E}[\|\varepsilon_0\|^2]}{1-(1-a)^2} + \frac{2a^2\sigma^2}{1-(1-a)^2}K + \frac{2L^2(1-a)^2}{1-(1-a)^2} \sum_{k=0}^{K-1} \mathbb{E}[\|x_k - x_{k-1}\|^2] \\
    & \leq \frac{\sigma^2}{a m} + 2a\sigma^2K + \frac{2L^2}{a} \sum_{k=0}^{K-1} \mathbb{E}[\|x_k - x_{k-1}\|^2],
    \end{aligned}
\end{equation}
where the second inequality uses $1 - (1-a)^2 \geq a$ and $\mathbb{E}[\|\varepsilon_0\|^2] \leq \frac{\sigma^2}{m}$. Since $\nu_k$ is any positive number, we let $\nu_k = c_{a}/\rho
$. 
Summing \eqref{eq:descent-2} over $k=0,1,\ldots, K$ and combining with \eqref{eq:sum-v-nablaf} yields
\begin{equation}\nonumber
    \begin{aligned}
&\mathbb{E}\left[\mathcal{L}_\rho\left(x_{K+1}, y_{K+1}, \lambda_{k+1}\right)\right] \leq  \mathbb{E}\left[\mathcal{L}_\rho\left(x_1, y_1, \lambda_1\right)\right]  + (\frac{c_a}{2\rho}+ \frac{10}{\rho \sigma_A}  )\sum_{k=0}^K \mathbb{E}[\|v_k-\nabla f\left(x_k\right)\|^2] \\
&- \sigma_{\min}(H) \mathbb{E}[\|y_{k+1} - y_k\|^2]-\left(\eta \phi_{\min }+\frac{\sigma_A \rho}{2}-\frac{L}{2} - \frac{1}{2\nu_k} - \frac{5\left(L^2+2\eta^2 \phi_{\max }^2\right)}{\rho\sigma_A}\right)\sum_{k=0}^K\mathbb{E}[ \left\|x_{k+1}-x_k\right\|^2] \\
\leq & \mathbb{E}\left[\mathcal{L}_\rho\left(x_1, y_1, \lambda_1\right)\right]  + (\frac{c_a}{2\rho}+ \frac{10}{\rho \sigma_A}  ) (\frac{\sigma^2}{am} + 2a\sigma^2K )- \sigma_{\min}(H) \sum_{k=0}^K\mathbb{E}[\|y_{k+1} - y_k\|^2] \\
&-\left(\eta \phi_{\min }+\frac{\sigma_A \rho}{2}-\frac{L}{2} - \frac{1}{2c_a}\rho - \frac{5\left(L^2+2\eta^2 \phi_{\max }^2\right)}{\rho\sigma_A} - (\frac{L^2}{c_a}+ \frac{20L^2}{a \rho \sigma_A}  )\right)\sum_{k=0}^K\mathbb{E}[ \left\|x_{k+1}-x_k\right\|^2].
\end{aligned}
\end{equation}
Since $\eta = \frac{\phi_{\min } \rho \sigma_A }{20\phi_{\max}^2}$, we obtain that
\begin{equation}\label{eq:l-bound}
    \begin{aligned}
&\mathbb{E}\left[\mathcal{L}_\rho\left(x_{K+1}, y_{K+1}, \lambda_{K+1}\right)\right] 
\leq  \mathbb{E}\left[\mathcal{L}_\rho\left(x_1, y_1, \lambda_1\right)\right]  + \underbrace{ (\frac{c_a}{2\rho}+ \frac{10}{\rho \sigma_A}  ) (\frac{\sigma^2}{am} + 2a\sigma^2K )}_{\Gamma_1}- \sigma_{\min}(H) \sum_{k=0}^K\mathbb{E}[\|y_{k+1} - y_k\|^2] \\ 
&-\underbrace{\left(\frac{ \phi_{\min }^2 \sigma_A \rho}{40 \phi_{\max }^2}+\frac{\sigma_A \rho}{2}-\frac{L}{2} - \frac{1}{2c_a} \rho - \frac{5L^2}{\rho\sigma_A} - (\frac{ L^2}{c_a}+ \frac{20L^2}{a \rho \sigma_A}  )\right)}_{\Gamma_2}\sum_{k=0}^K\mathbb{E}[ \left\|x_{k+1}-x_k\right\|^2].
\end{aligned}
\end{equation}
For $\Gamma_1$,  combining with $a = c_{a}^2/\rho^2, m = \lceil \rho \rceil$ and $\rho = c_{\rho}K^{1/3}$, we have that
$$
\begin{aligned}
\Gamma_1 &= (\frac{c_{a}}{2\rho} + \frac{10}{\sigma_A \rho})(\frac{\sigma^2}{c_{a}^2}\rho + \frac{2c_{a}^2}{\rho^2}\sigma^2K)\\
 & = (\frac{c_{a}}{2} + \frac{10}{\sigma_A}) (\frac{\sigma^2}{c_{a}^2} + \frac{2c_{a}^2\sigma^2}{c_{\rho}^3})=: \mathcal{C}_1,
\end{aligned}
$$
where the second equality uses $K = \frac{\rho^3}{c_{\rho}^3}$.   For $\Gamma_2$, since $c_{a} = \max\left\{ (\frac{1+2L^2}{2} + \frac{20L^2}{\sigma_A}) \frac{2}{\tau}, 1 \right\}$ and $\tau = \frac{ \phi_{\min }^2 \sigma_A }{40 \phi_{\max }^2}+\frac{\sigma_A }{2}$, one has that
$$
\begin{aligned}
\Gamma_2  & = \tau \rho - \frac{L}{2} -\frac{1}{2c_{a}} \rho - \frac{5L^2}{\sigma_A \rho} - \frac{L^2}{c_{a}} -  \frac{20L^2}{c_{a}^2\sigma_A}\rho\\
& = ( \tau - \frac{1+2L^2}{2c_{a}}  - \frac{20L^2}{c_{a}^2 \sigma_A} ) \rho -  \frac{5L^2}{\sigma_A } \frac{1}{\rho}- \frac{L}{2}\\
& \geq ( \tau - \frac{1+2L^2}{2c_{a}}  - \frac{20L^2}{c_{a} \sigma_A} ) \rho -  \frac{5L^2}{\sigma_A }- \frac{L}{2} \\
& \geq  \frac{\tau}{2} \rho -  \frac{5L^2}{\sigma_A }- \frac{L}{2} \\
& \geq \frac{\tau}{4}\rho.
\end{aligned}
$$
where we use $c_{\rho} =  \max\{\frac{20L^2 + 2\sigma_A L }{\sigma_A\tau},1 \}$ and $\rho>c_{\rho}>1$. Plugging those two term into \eqref{eq:l-bound} yields
\begin{equation}\nonumber
   \frac{\tau}{4}\rho\sum_{k=0}^K \left( \mathbb{E}[ \left\|x_{k+1}-x_k\right\|^2] + \frac{4\sigma_{\min}(H)}{\tau \rho} \mathbb{E}[\|y_{k+1} - y_k\|^2] \right) \leq \mathcal{C}_1 + \psi_1 - \psi_{K+1}.
\end{equation}
Since $\rho = c_{\rho}K^{1/3}$, and from Assumption \ref{assum:low-bound}, there exists a low bound $\psi_*$ of the sequence $\{\psi_k\}$, we give \eqref{eq:bound:xkplus-xk} and complete the proof. 
\end{proof}

By combining with Lemmas \ref{lem:relation-optima-x} and \ref{lem:xk-xk+1}, we give the proof of Theorem \ref{theorem:constant}.

\begin{proof}[Proof of Theorem \ref{theorem:constant}]
By the definition of $\epsilon$-stationary point in Definition \ref{def:kkt}, we have that
\begin{equation}\label{eq:temp2}
     \mathbb{E}\left[ \text{dist}^2(0,\partial L(x_k,y_k,\lambda_k))\right] = \mathbb{E}\left\|A^T \lambda_k-\nabla f\left(x_k\right)\right\|^2 + \mathbb{E}\left\|A x_{k+1}+B y_{k+1}-c\right\|^2 + \mathbb{E}\left[\operatorname{dist}^2\left(B^T \lambda_k, \partial h\left(y_k\right)\right)\right] 
\end{equation}
Now we analyze those three terms respectively. 
It follows from Lemma \ref{lem:relation-optima-x} that
\begin{equation}\nonumber
    \begin{aligned}
        \sum_{k=1}^K \mathbb{E}\left\|A^T \lambda_k-\nabla f\left(x_k\right)\right\|^2 & \leq  3 \sum_{k=1}^K( \mathbb{E}[ \|v_{k-1} - \nabla f(x_{k-1})\|^2 ]  + (L^2 + \eta^2 \phi_{\max }^2) \mathbb{E}[ \|x_k - x_{k-1}\|^2]  ) \\
        & \leq \frac{3\sigma^2}{am} + 6a\sigma^2K + ( \frac{6L^2}{a} + L^2 + \eta^2 \phi_{\max }^2 ) \sum_{k=1}^K \mathbb{E}[\|x_k - x_{k-1}\|^2] \\
        & \leq \frac{3\sigma^2 \rho}{c_{a}^2} + \frac{6c_{a}^2\sigma^2}{\rho^2}K +( \frac{6L^2 \rho^2}{c_{a}^2} +L^2 +  \frac{\phi_{\min}^2\sigma_A^2\rho^2}{400\phi_{\max}^2})\sum_{k=1}^K \mathbb{E}[\|x_k - x_{k-1}\|^2] \\
        & \leq \left(  \frac{3\sigma^2 c_\rho}{c_{a}^2} +    \frac{6c_{a}^2\sigma^2}{c_\rho^2}   +   ( \frac{6L^2 \rho^2}{c_{a}^2} +L^2 +  \frac{\phi_{\min}^2\sigma_A^2\rho^2}{400\phi_{\max}^2}) \frac{4\mathcal{C}_1 + \psi_1-\psi_*}{\tau c_{\rho}}  \right) K^{1/3},
    \end{aligned}
\end{equation}
where the first inequality uses \eqref{eq:bound-alambda}, the second inequality utilizes \eqref{eq:sum-v-nablaf}, the third inequality uses the definition of $a,m$ and $\eta$, the last inequality follows from \eqref{eq:bound:xkplus-xk} and the definition of $\rho$. Now we consider the second term in \eqref{eq:temp2}. It follows from \eqref{eq:bound-AB} that
\begin{equation}\nonumber
    \begin{aligned}
      &  \sum_{k=1}^K \mathbb{E}\left\|A x_{k+1}+B y_{k+1}-c\right\|^2   = \frac{1}{\rho^2} \sum_{k=1}^K \mathbb{E}[\left\|\lambda_{k+1}-\lambda_k\right\|^2] \\
        & \leq \frac{1}{\rho^2} ( \frac{10}{\sigma_A} \sum_{k=1}^K \mathbb{E}[ \|v_k 
 - \nabla f(x_k)\|^2] + \frac{5(L^2 + 2\eta^2 \phi_{\max}^2)}{\sigma_A}  \sum_{k=1}^K \mathbb{E}[\|x_k - x_{k+1}\|^2] )  \\
 & \leq \frac{10\sigma^2}{am \rho^2 \sigma_A} + \frac{20a\sigma^2K}{\rho^2 \sigma_A} + \frac{20L^2}{a \rho^2 \sigma_A} \sum_{k=1}^K \mathbb{E}[\|x_k - x_{k-1}\|^2]  + \frac{5(L^2 + 2\eta^2 \phi_{\max}^2)}{\sigma_A \rho^2}  \sum_{k=1}^K \mathbb{E}[\|x_k - x_{k+1}\|^2] \\
 & \leq \frac{10\sigma^2}{c_{a}^2 \sigma_A c_\rho} K^{-1/3} + \frac{20c_{a}^2\sigma^2}{c_\rho^4 \sigma_A} K^{-1/3} + \frac{80L^2(\mathcal{C}_1 + \psi_1-\psi_*)}{c_{a}^2  \sigma_A \tau c_{\rho}} K^{-1/3} +  \frac{20L^2(\mathcal{C}_1 + \psi_1-\psi_*)}{\tau \sigma_A c_{\rho}^3} K^{-1} + \frac{\phi_{\min}^2 \sigma_A(\mathcal{C}_1 + \psi_1-\psi_*)}{10\phi_{\max}^2\tau c_{\rho}} K^{-1/3},
    \end{aligned}
\end{equation}
where the first inequality uses \eqref{eq:diff-lambda-bound2}, the second inequality utilizes \eqref{eq:sum-v-nablaf}. 
Finally, we focus on the last term in \eqref{eq:temp2}. It follows from \eqref{eq:bound-partial} that
\begin{equation}\nonumber
\begin{aligned}
    \sum_{k=1}^K   \mathbb{E}\left[\operatorname{dist}\left(B^T \lambda_k, \partial h\left(y_k\right)\right)\right]^2  \leq &2\rho^2\|B\|_2^2\|A\|_2^2 \sum_{k=1}^K \left(  \mathbb{E}[\left\|x_k-x_{k-1}\right\|^2] + \frac{\sigma^2_{\max}(H)}{  \rho^2\|B\|_2^2\|A\|_2^2} \mathbb{E}[\|y_k - y_{k-1}\|^2 ]\right) \\
    \leq &2\rho^2\|B\|_2^2\|A\|_2^2 \sum_{k=1}^K \left(  \mathbb{E}[\left\|x_k-x_{k-1}\right\|^2] + \frac{4\sigma_{\min}(H)}{\tau \rho} \mathbb{E}[\|y_k - y_{k-1}\|^2] \right) \\
    \leq &  \frac{2c_\rho\|B\|_2^2\|A\|_2^2(\mathcal{C}_1 \psi_1)}{\tau} K^{1/3}.
    \end{aligned}
\end{equation}
where the second inequality uses $\rho \geq c_{\rho} \geq \frac{\tau \sigma^2_{\max}(H)}{4 \|A\|^2 \|B\|^2 \sigma_{\min}(H)}$. 
Let us denote 
$$
\begin{aligned}
\mathcal{H}_1 : &= \frac{3\sigma^2 c_\rho}{c_{a}^2} +    \frac{6c_{a}^2\sigma^2}{c_\rho^2}   +   ( \frac{6L^2 \rho^2}{c_{a}^2} +L^2 +  \frac{\phi_{\min}^2\sigma_A^2\rho^2}{400\phi_{\max}^2}) \frac{4\mathcal{C}_1 + \psi_1-\psi_*}{\tau c_{\rho}}  + \frac{2c_\rho\|B\|_2^2\|A\|_2^2(\mathcal{C}_1 \psi_1)}{\tau}, \\
\mathcal{H}_2: &= \frac{10\sigma^2}{c_{a}^2 \sigma_A c_\rho}  + \frac{20c_{a}^2\sigma^2}{c_\rho^4 \sigma_A}  + \frac{80L^2(\mathcal{C}_1 + \psi_1-\psi_*)}{c_{a}^2  \sigma_A \tau c_{\rho}}  + \frac{\phi_{\min}^2 \sigma_A(\mathcal{C}_1 + \psi_1-\psi_*)}{10\phi_{\max}^2\tau c_{\rho}},\\
\mathcal{H}_3: &=    \frac{20L^2(\mathcal{C}_1 + \psi_1-\psi_*)}{\tau \sigma_A c_{\rho}^3}.
\end{aligned}
$$
Then we have that 
\begin{equation}\nonumber
\begin{aligned}
    \min_{1\leq k \leq K} \mathbb{E}\left[ \text{dist}^2(0,\partial L(x_k,y_k,\lambda_k))\right] &  \leq \frac{1}{K} \sum_{k=1}^K \mathbb{E}\left[ \text{dist}^2(0,\partial L(x_k,y_k,\lambda_k))\right] \\
    & \leq  \frac{1}{K}\left( \mathcal{H}_1 K^{1/3} +  \mathcal{H}_2 K^{-1/3} + \mathcal{H}_3 K^{-1}  \right) \\
    & \leq  \mathcal{H}_1 K^{-2/3} +  \mathcal{H}_2 K^{-4/3} + \mathcal{H}_3 K^{-2}. 
    \end{aligned}
\end{equation}
The proof is completed.

\end{proof}

\subsection{Proof of Section \ref{sec:diminish}}\label{appendix-diminish}
 Let us first define the Lyapunov function as follows:
\begin{equation}\label{def:Phi}
    \Phi_k: = \mathcal{L}_\rho\left(x_k, y_k, \lambda_k\right) + \gamma_k \|\varepsilon_k\|^2,
\end{equation}
where $\epsilon_k = v_k - \nabla f(x_k)$ and $\gamma_k >0$ is a parameter, that will be given in next. Before providing the proof of Theorem \ref{theorem:diminish}, we establish the following descent lemma.
\begin{lemma}
Suppose that Assumptions \ref{assm:lipsciz}-\ref{assum:variance} hold.     Let the sequence $\left\{x_k, y_k, \lambda_k\right\}_{k=1}^K$ be generated by Algorithm \ref{alg:sam}. Assume that $\rho_k = c_{\rho}k^{1/3}$. Then
     \begin{equation}\label{eq:Phi-minus}
    \begin{aligned}
        \Phi_{k+1} - \Phi_k   \leq & (\frac{\nu_k}{2}+ \frac{10}{{\rho_k} \sigma_A} + \gamma_{k+1}(1-a_{k+1})^2 - \gamma_k )\|\varepsilon_k\|^2 +  \frac{10}{{\rho_k} \sigma_A}  \|\varepsilon_{k-1}\|^2 - \sigma_{\min}(H) \mathbb{E}[\|y_{k+1} - y_k\|^2]\\
       &-\left(\eta_k \phi_{\min }+\frac{\sigma_A {\rho_k}}{2}-\frac{L}{2} - \frac{1}{2\nu_k} - \frac{10 \eta_k^2 \phi_{\max }^2}{{\rho_k}\sigma_A} - 2\gamma_{k+1}L^2(1-a_{k+1})^2 \right)\mathbb{E}[ \left\|x_{k+1}-x_k\right\|^2] \\
       & +\frac{10\left(L^2+\eta_k^2 \phi_{\max }^2\right)}{{\rho_k}\sigma_A} \left\|x_{k-1}-x_k\right\|^2  + 2\gamma_{k+1}a_{k+1}^2  \sigma^2.
    \end{aligned}
\end{equation}
\end{lemma}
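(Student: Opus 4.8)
The plan is to derive \eqref{eq:Phi-minus} by stacking three estimates that are already established: the one-step augmented-Lagrangian descent \eqref{eq:single-descent}, the momentum-error recursion of Lemma \ref{lem:diff-nablaf-v}, and the multiplier-gap bound \eqref{eq:diff-lambda-bound2}. Recalling from \eqref{def:Phi} that $\Phi_k = \mathcal{L}_{\rho_k}(x_k,y_k,\lambda_k) + \gamma_k\|\varepsilon_k\|^2$ with $\varepsilon_k = v_k - \nabla f(x_k)$, I would start from \eqref{eq:single-descent}, which already controls $\mathbb{E}[\mathcal{L}_{\rho_{k+1}}(x_{k+1},y_{k+1},\lambda_{k+1})] - \mathbb{E}[\mathcal{L}_{\rho_k}(x_k,y_k,\lambda_k)]$; it then remains to (a) eliminate the $\|\lambda_{k+1}-\lambda_k\|^2$ term, and (b) append $\gamma_{k+1}\|\varepsilon_{k+1}\|^2 - \gamma_k\|\varepsilon_k\|^2$ so that the left-hand side becomes $\Phi_{k+1}-\Phi_k$.

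For step (a), I would first note that with $\rho_k = c_\rho k^{1/3}$ the sequence $\{\rho_k\}$ is nondecreasing and $\rho_{k+1}/\rho_k = ((k+1)/k)^{1/3} \le 2^{1/3} \le 2$ for all $k\ge 1$, so the coefficient of $\|\lambda_{k+1}-\lambda_k\|^2$ in \eqref{eq:single-descent} obeys $\tfrac{1}{\rho_k} + \tfrac{\rho_{k+1}-\rho_k}{2\rho_k^2} \le \tfrac{2}{\rho_k}$. Multiplying \eqref{eq:diff-lambda-bound2} by $\tfrac{2}{\rho_k}$ and substituting contributes $\tfrac{10}{\rho_k\sigma_A}\|\varepsilon_k\|^2 + \tfrac{10}{\rho_k\sigma_A}\|\varepsilon_{k-1}\|^2 + \tfrac{10\eta_k^2\phi_{\max}^2}{\rho_k\sigma_A}\|x_{k+1}-x_k\|^2 + \tfrac{10(L^2+\eta_{k-1}^2\phi_{\max}^2)}{\rho_k\sigma_A}\|x_{k-1}-x_k\|^2$ to the right-hand side; since $\eta_k = c_\eta k^{1/3}$ is also nondecreasing, I would upper-bound $\eta_{k-1}$ by $\eta_k$ in the last term to land on the form appearing in \eqref{eq:Phi-minus}.

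For step (b), I would add $\gamma_{k+1}\|\varepsilon_{k+1}\|^2$ to both sides and apply Lemma \ref{lem:diff-nablaf-v} at index $k+1$, namely $\mathbb{E}\|\varepsilon_{k+1}\|^2 \le (1-a_{k+1})^2\|\varepsilon_k\|^2 + 2a_{k+1}^2\sigma^2 + 2L^2(1-a_{k+1})^2\|x_{k+1}-x_k\|^2$, which turns $\gamma_{k+1}\|\varepsilon_{k+1}\|^2$ into $\gamma_{k+1}(1-a_{k+1})^2\|\varepsilon_k\|^2 + 2\gamma_{k+1}a_{k+1}^2\sigma^2 + 2\gamma_{k+1}L^2(1-a_{k+1})^2\|x_{k+1}-x_k\|^2$; subtracting $\gamma_k\|\varepsilon_k\|^2$ then completes the left-hand side as $\Phi_{k+1}-\Phi_k$. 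Collecting like terms — the $\|\varepsilon_k\|^2$ coefficient $\tfrac{\nu_k}{2}$ from \eqref{eq:single-descent}, plus $\tfrac{10}{\rho_k\sigma_A}$ from step (a), plus $\gamma_{k+1}(1-a_{k+1})^2 - \gamma_k$ from step (b); the $\|x_{k+1}-x_k\|^2$ coefficient obtained by subtracting the two positive contributions $\tfrac{10\eta_k^2\phi_{\max}^2}{\rho_k\sigma_A}$ and $2\gamma_{k+1}L^2(1-a_{k+1})^2$ inside the negative bracket of \eqref{eq:single-descent}; and carrying the $\|\varepsilon_{k-1}\|^2$, $\|x_{k-1}-x_k\|^2$, $\|y_{k+1}-y_k\|^2$ terms together with the constant $2\gamma_{k+1}a_{k+1}^2\sigma^2$ — yields exactly \eqref{eq:Phi-minus}.

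The argument is essentially bookkeeping, and the only genuinely new point beyond substitution — the one place where the time-varying penalty matters — is the control of $\tfrac{\rho_{k+1}-\rho_k}{2\rho_k^2}$ via the ratio estimate $\rho_{k+1}\le 2\rho_k$, together with the harmless monotone replacement of $\eta_{k-1}$ by $\eta_k$. I would also keep each estimate in its conditional-expectation form and then take total expectations, exactly as in the derivations of \eqref{eq:single-descent}, \eqref{eq:diff-lambda-bound2}, and Lemma \ref{lem:diff-nablaf-v}, so that no independence subtleties arise.
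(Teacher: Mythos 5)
Your proposal is correct and follows essentially the same route as the paper's proof: bound the coefficient of $\|\lambda_{k+1}-\lambda_k\|^2$ by $2/\rho_k$ using $\rho_{k+1}-\rho_k\le 2\rho_k$, substitute \eqref{eq:diff-lambda-bound2}, then add $\gamma_{k+1}\|\varepsilon_{k+1}\|^2-\gamma_k\|\varepsilon_k\|^2$ and invoke Lemma \ref{lem:diff-nablaf-v} at index $k+1$. Your explicit justification of replacing $\eta_{k-1}$ by $\eta_k$ via monotonicity is a small point the paper glosses over, but it is the same argument.
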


\begin{proof}
Since $\rho_k = c_{\rho}k^{1/3}$, one has that $\rho_{k+1} - \rho_k \leq 2\rho_k$, and \eqref{eq:single-descent} reduced to
 \begin{equation}\label{eq:single-descent-1}
\begin{aligned}
\mathbb{E}\left[\mathcal{L}_{\rho_{k+1}}\left(x_{k+1}, y_{k+1}, \lambda_{k+1}\right)\right] \leq & \mathbb{E}\left[\mathcal{L}_{\rho_k}\left(x_k, y_k, \lambda_k\right)\right]+\frac{2}{{\rho_k}} \mathbb{E} [\left\|\lambda_{k+1}-\lambda_k\right\|^2] + \frac{\nu_k}{2}\mathbb{E}[\|v_k-\nabla f\left(x_k\right)\|^2] \\
&- \sigma_{\min}(H) \mathbb{E}[\|y_{k+1} - y_k\|^2]-\left(\eta_k \phi_{\min }+\frac{\sigma_A {\rho_k}}{2}-\frac{L}{2} - \frac{1}{2\nu_k}\right)\mathbb{E}[ \left\|x_{k+1}-x_k\right\|^2],
\end{aligned}
\end{equation}
     Plugging \eqref{eq:diff-lambda-bound2} into \eqref{eq:single-descent-1} yields
   \begin{equation}\label{eq:descent-3}
\begin{aligned}
\mathbb{E}\left[\mathcal{L}_{\rho_{k+1}}\left(x_{k+1}, y_{k+1}, \lambda_{k+1}\right)\right] \leq & \mathbb{E}\left[\mathcal{L}_{\rho_k}\left(x_k, y_k, \lambda_k\right)\right]  + (\frac{\nu_k}{2}+ \frac{10}{{\rho_k} \sigma_A}  )\mathbb{E}[\|v_k-\nabla f\left(x_k\right)\|^2] + \frac{10}{{\rho_k} \sigma_A}\mathbb{E}[\|v_{k-1}-\nabla f\left(x_{k-1}\right)\|^2] \\
&- \sigma_{\min}(H) \mathbb{E}[\|y_{k+1} - y_k\|^2]-\left(\eta_k \phi_{\min }+\frac{\sigma_A {\rho_k}}{2}-\frac{L}{2} - \frac{1}{2\nu_k} - \frac{10 \eta_k^2 \phi_{\max }^2}{{\rho_k}\sigma_A}\right)\mathbb{E}[ \left\|x_{k+1}-x_k\right\|^2]\\
& +\frac{10\left(L^2+\eta_k^2 \phi_{\max }^2\right)}{{\rho_k}\sigma_A}\left\|x_{k-1}-x_k\right\|^2.
\end{aligned}
\end{equation}

Let us denote $\psi_k: = \mathcal{L}_{\rho_k}\left(x_k, y_k, \lambda_k\right)$ and $\varepsilon_k = v_k -\nabla f(x_k)$.  Then
\begin{equation}\nonumber
    \begin{aligned}
       &   \psi_{k+1} - \psi_k + \gamma_{k+1} \|\varepsilon_{k+1}\|^2 \\
       \leq &  (\frac{\nu_k}{2}+ \frac{10}{{\rho_k} \sigma_A}  )\|\varepsilon_k\|^2 + \frac{10}{{\rho_k} \sigma_A}\|\varepsilon_{k-1}\|^2+\frac{10\left(L^2+\eta_k^2 \phi_{\max }^2\right)}{{\rho_k}\sigma_A}\left\|x_{k-1}-x_k\right\|^2 \\
       &- \sigma_{\min}(H) \mathbb{E}[\|y_{k+1} - y_k\|^2]-\left(\eta_k \phi_{\min }+\frac{\sigma_A {\rho_k}}{2}-\frac{L}{2} - \frac{1}{2\nu_k} - \frac{10 \eta_k^2 \phi_{\max }^2}{{\rho_k}\sigma_A}\right)\mathbb{E}[ \left\|x_{k+1}-x_k\right\|^2] \\
       & + \gamma_{k+1}\left( 
  \left(1-a_{k+1}\right)^2 \mathbb{E}\left\|\varepsilon_{k}\right\|^2+2a_{k+1}^2 \sigma^2+2 L^2\left(1-a_{k+1}\right)^2 \mathbb{E}\left\|x_{k+1}-x_{k}\right\|^2\right)\\
       \leq & (\frac{\nu_k}{2}+ \frac{10}{{\rho_k} \sigma_A} + \gamma_{k+1}(1-a_{k+1})^2 )\|\varepsilon_k\|^2 +  \frac{10}{{\rho_k} \sigma_A}  \|\varepsilon_{k-1}\|^2 \\
       &-\left(\eta_k \phi_{\min }+\frac{\sigma_A {\rho_k}}{2}-\frac{L}{2} - \frac{1}{2\nu_k} - \frac{10 \eta_k^2 \phi_{\max }^2}{{\rho_k}\sigma_A} - 2\gamma_{k+1}L^2(1-a_{k+1})^2 \right)\mathbb{E}[ \left\|x_{k+1}-x_k\right\|^2] \\
       & +\frac{10\left(L^2+\eta_k^2 \phi_{\max }^2\right)}{{\rho_k}\sigma_A} \left\|x_{k-1}-x_k\right\|^2  + 2\gamma_{k+1}a_{k+1}^2  \sigma^2.
    \end{aligned}
\end{equation}
The proof is completed.
\end{proof}

\begin{theorem}\label{theo:temp}
   Under the same setting in Theorem \ref{theorem:diminish},  let the sequence $\left\{x_k, y_k, \lambda_k\right\}_{k=1}^K$ be generated by Algorithm \ref{alg:sam}. Assume that 
   $$\rho_k = c_{\rho}k^{1/3}, a_{k+1} = c_ak^{-2/3}, \eta_k = c_{\eta}k^{1/3}, \nu_k = c_{\nu}/\rho_k,\gamma_{k+1} = c_{\gamma}k^{1/3},$$
   where $c_{\nu}, c_{\gamma},c_{\rho},c_a, c_{\eta}$ satisfy that
   \begin{equation}\nonumber
    \begin{aligned}
        c_{\nu} & \geq \frac{1}{4\sigma_A},~~c_{\gamma} \leq \frac{\sigma_A c_{\rho}}{16L^2},~c_{\eta} \leq \frac{\sigma_A c_{\rho}}{\sqrt{160}\phi_{\max}}\\
       c_{\rho} & \geq \frac{8L}{\sigma_A} + \frac{160L^2}{\sigma_A^2} + \frac{\|A\|\|B\|}{\sigma^2_{\max}(H)},\\
        c_a & \geq \frac{3c_{\nu}c_{\rho} + 60 + 2c_{\gamma}\sigma_A c_{\rho}}{3c_{\gamma}\sigma_A c_{\rho}}.
    \end{aligned}
\end{equation}
 Then we have that
\begin{equation}\nonumber
    \sum_{k=1}^K k^{-1/3} \mathbb{E}[\| \epsilon_k  \|^2] +  \sum_{k=1}^K k^{1/3} \mathbb{E}[\|x_{k+1} - x_k\|^2]+\sum_{k=1}^K \mathbb{E}[\|y_{k+1} - y_k\|^2]  \leq \frac{\Phi_1 - \Phi_* + 2\sigma^2 c_a^2 c_{\gamma} \ln(K)}{\min(\mathcal{C}_3,\mathcal{C}_4,\sigma_{\min}(H))}.
\end{equation}
\end{theorem}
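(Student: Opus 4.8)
The plan is to telescope the descent estimate \eqref{eq:Phi-minus} over $k=1,\dots,K$ and to show that, once the stated parameter schedule is substituted, every per-iteration coefficient on the right-hand side has a favourable sign, so that only the noise residual $2\sigma^2\sum_k\gamma_{k+1}a_{k+1}^2$ survives.

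\textbf{Step 1 (summation and reindexing of the lagged terms).} Summing \eqref{eq:Phi-minus} for $k=1,\dots,K$ telescopes the left side to $\Phi_{K+1}-\Phi_1$. On the right, the two one-step-lagged contributions $\tfrac{10}{\rho_k\sigma_A}\|\varepsilon_{k-1}\|^2$ and $\tfrac{10(L^2+\eta_k^2\phi_{\max}^2)}{\rho_k\sigma_A}\|x_{k-1}-x_k\|^2$ are shifted by one index so that each merges into the coefficient of $\|\varepsilon_k\|^2$, resp. $\|x_{k+1}-x_k\|^2$, at the preceding step. Since $\rho_k=c_\rho k^{1/3}$ is nondecreasing, $\tfrac{10}{\rho_{k+1}\sigma_A}\le\tfrac{10}{\rho_k\sigma_A}$ (the $x$-term picks up only a harmless $2^{1/3}$ factor from $\eta_{k+1}$), so after the shift the effective coefficient of $\|\varepsilon_k\|^2$ is at most $\tfrac{\nu_k}{2}+\tfrac{20}{\rho_k\sigma_A}+\gamma_{k+1}(1-a_{k+1})^2-\gamma_k$, that of $\|x_{k+1}-x_k\|^2$ is at most $-\bigl(\eta_k\phi_{\min}+\tfrac{\sigma_A\rho_k}{2}-\tfrac L2-\tfrac{1}{2\nu_k}-\tfrac{10\eta_k^2\phi_{\max}^2}{\rho_k\sigma_A}-2\gamma_{k+1}L^2(1-a_{k+1})^2\bigr)+\tfrac{20(L^2+\eta_{k+1}^2\phi_{\max}^2)}{\rho_k\sigma_A}$, and that of $\|y_{k+1}-y_k\|^2$ is exactly $-\sigma_{\min}(H)$; the only leftovers are the boundary constants in $\|\varepsilon_0\|^2$ and $\|x_1-x_0\|^2$.

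\textbf{Step 2 (sign of the aggregated coefficients).} Plug in $\rho_k=c_\rho k^{1/3}$, $a_{k+1}=c_a k^{-2/3}$, $\eta_k=c_\eta k^{1/3}$, $\nu_k=c_\nu/\rho_k$, $\gamma_{k+1}=c_\gamma k^{1/3}$. For the momentum-error coefficient, the elementary bounds $(1-a_{k+1})^2\le 1-a_{k+1}$ and $k^{1/3}-(k-1)^{1/3}\le 2^{1/3}k^{-1/3}$ (handling $k=1$ as a boundary constant) give $\gamma_{k+1}(1-a_{k+1})^2-\gamma_k\le c_\gamma\bigl(2^{1/3}-c_a\bigr)k^{-1/3}$, so the whole coefficient is at most $\bigl(\tfrac{c_\nu}{2c_\rho}+\tfrac{20}{c_\rho\sigma_A}+c_\gamma 2^{1/3}-c_ac_\gamma\bigr)k^{-1/3}$, which the lower bound $c_a\ge\tfrac{3c_\nu c_\rho+60+2c_\gamma\sigma_A c_\rho}{3c_\gamma\sigma_A c_\rho}$ renders $\le-\mathcal{C}_3 k^{-1/3}$ for an explicit $\mathcal{C}_3>0$. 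For the $x$-increment coefficient, $c_\eta\le\tfrac{\sigma_A c_\rho}{\sqrt{160}\phi_{\max}}$ forces $\tfrac{10\eta_k^2\phi_{\max}^2}{\rho_k\sigma_A}\le\tfrac{\sigma_A c_\rho}{16}k^{1/3}$, $c_\gamma\le\tfrac{\sigma_A c_\rho}{16L^2}$ forces $2\gamma_{k+1}L^2\le\tfrac{\sigma_A c_\rho}{8}k^{1/3}$, $c_\nu\ge\tfrac{1}{4\sigma_A}$ keeps $\tfrac{1}{2\nu_k}$ a controlled fraction of $\tfrac{\sigma_A\rho_k}{2}$, and $c_\rho\ge\tfrac{8L}{\sigma_A}+\tfrac{160L^2}{\sigma_A^2}+\tfrac{\|A\|\|B\|}{\sigma_{\max}^2(H)}$ absorbs the $O(1)$ term $\tfrac L2$ and the $O(k^{-1/3})$ term $\tfrac{20L^2}{c_\rho\sigma_A}k^{-1/3}$ into a fraction of $\tfrac{\sigma_A c_\rho}{2}k^{1/3}$; combining them yields a bound $\le-\mathcal{C}_4 k^{1/3}$ with $\mathcal{C}_4>0$.

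\textbf{Step 3 (noise residual and conclusion).} The remaining sum is $2\sigma^2\sum_{k=1}^K\gamma_{k+1}a_{k+1}^2=2\sigma^2 c_\gamma c_a^2\sum_{k=1}^K k^{-1}\le 2\sigma^2 c_a^2 c_\gamma(1+\ln K)$. Rearranging the telescoped inequality and using $\Phi_{K+1}\ge\Phi_*$ for a lower bound $\Phi_*$ of $\{\Phi_k\}$ -- which exists because $\|\varepsilon_k\|^2\ge0$ and, by Assumptions \ref{assum:bound-grad}, \ref{assum:low-bound}, \ref{assum:rank-A} together with the identity $A^\top\lambda_{k+1}=v_k-\eta_kQ(x_k-x_{k+1})$ from \eqref{eq:ALambda} controlling the multiplier term, $\mathcal{L}_{\rho_k}(x_k,y_k,\lambda_k)$ stays bounded below along the iterates -- we get $\mathcal{C}_3\sum_k k^{-1/3}\|\varepsilon_k\|^2+\mathcal{C}_4\sum_k k^{1/3}\|x_{k+1}-x_k\|^2+\sigma_{\min}(H)\sum_k\|y_{k+1}-y_k\|^2\le\Phi_1-\Phi_*+2\sigma^2 c_a^2 c_\gamma\ln K$ after absorbing the boundary constants and the $1$ in $1+\ln K$. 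Dividing by $\min(\mathcal{C}_3,\mathcal{C}_4,\sigma_{\min}(H))$ and taking total expectation gives the claim. \textbf{Expected main obstacle:} Step 2 is the delicate part -- the coefficient of $\|\varepsilon_k\|^2$ carries the non-monotone combination $\gamma_{k+1}(1-a_{k+1})^2-\gamma_k$, in which the telescoping gap $c_\gamma(k^{1/3}-(k-1)^{1/3})=O(k^{-2/3})$ must be dominated by the $-c_ac_\gamma k^{-1/3}$ decay from the momentum, and one must track every constant so that the threshold on $c_a$ comes out exactly as stated; a second, more structural subtlety is showing $\{\Phi_k\}$ is bounded below even though $\rho_k\to\infty$, i.e. that the growing penalty $\tfrac{\rho_k}{2}\|Ax_k+By_k-c\|^2$ swamps the (also growing) multiplier term once the optimality relation for $\lambda_k$ is inserted.
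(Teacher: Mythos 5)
Your proposal follows the same route as the paper's proof: telescope the Lyapunov descent inequality \eqref{eq:Phi-minus}, verify that under the stated parameter schedule the aggregated coefficients of $\|\varepsilon_k\|^2$, $\|x_{k+1}-x_k\|^2$ and $\|y_{k+1}-y_k\|^2$ are bounded by $-\mathcal{C}_3 k^{-1/3}$, $-\mathcal{C}_4 k^{1/3}$ and $-\sigma_{\min}(H)$ respectively, and control the residual noise sum $2\sigma^2 c_\gamma c_a^2\sum_k k^{-1}$ by a $\ln K$ term. You are in fact slightly more careful than the paper on two points it glosses over — the reindexing of the lagged $\|\varepsilon_{k-1}\|^2$ term (which doubles $10/(\rho_k\sigma_A)$ to $20/(\rho_k\sigma_A)$ and perturbs the constant in the $c_a$ threshold) and the lower-boundedness of $\{\Phi_k\}$ despite $\rho_k\to\infty$ — but these affect only the unimportant constants, not the argument.
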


\begin{proof}
Telescoping \eqref{eq:Phi-minus} from $k=1,\cdots,K$ gives
      \begin{equation}\label{eq:bound-phi}
    \begin{aligned}
        \Phi_{K+1} - \Phi_1   \leq & \sum_{k=1}^K  \underbrace{ (\frac{\nu_k}{2}+ \frac{10}{{\rho_k} \sigma_A} + \gamma_{k+1}(1-a_{k+1})^2 - \gamma_k )}_{\Gamma_3} \mathbb{E}[\|\varepsilon_k\|^2]  +  2\sigma^2 \sum_{k=1}^K\gamma_{k+1}a_{k+1}^2  \sigma^2- \sigma_{\min}(H) \sum_{k=1}^K \mathbb{E}[\|y_{k+1} - y_k\|^2] \\
       &-  \sum_{k=1}^K\underbrace{ \left(\eta_k \phi_{\min }+\frac{\sigma_A {\rho_k}}{2}-\frac{L}{2} - \frac{1}{2\nu_k} - \frac{10\left(L^2+2\eta_k^2 \phi_{\max }^2\right)}{{\rho_k}\sigma_A} - 2\gamma_{k+1}L^2(1-a_{k+1})^2 \right)}_{\Gamma_4}\mathbb{E}[ \left\|x_{k+1}-x_k\right\|^2].
    \end{aligned}
\end{equation}
Next, we bound the terms $\Gamma_3$ and $\Gamma_4$, respectively.    Since $\nu_k = c_{\nu}/{\rho_k}$ and $\rho_k = c_{\rho} k^{1/3}$,  one has that
    \begin{equation}\nonumber
        \begin{aligned}
\Gamma_3 &= \left( \frac{c_{\nu}}{2} + \frac{10}{\sigma_A} \right ) \frac{1}{{\rho_k}}  + \gamma_{k+1}(1-a_{k+1})^2 - \gamma_k  \\
           &\leq  \left( \frac{c_{\nu}}{2c_{\rho}} + \frac{10}{\sigma_A c_{\rho}} \right ) k^{-1/3}   + \gamma_{k+1} - \gamma_k - a_{k+1} \gamma_{k+1},
        \end{aligned}
    \end{equation}
   where the last inequality uses $(1-a_{k+1})^2 \leq (1-a_{k+1})$. Consider the convex function $l(x): = x^{1/3}$. By first order characterization, $l(x+1)\leq l(x) + l^{\prime}(x) = x^{1/3} + \frac{1}{3}x^{-2/3}$. Since $\gamma_{k+1} = c_{\gamma}k^{1/3}$, we have $\gamma_{k+1} - \gamma_k \leq   \frac{c_{\gamma}}{3} k^{-2/3}$. Combining with $a_{k+1} = c_ak^{-2/3}$ yields
   \begin{equation}\nonumber
       \begin{aligned}
           \Gamma_3 
           & \leq  \left( \frac{c_{\nu}}{2c_{\rho}} + \frac{10}{\sigma_A c_{\rho}} \right ) k^{-1/3}   + \frac{c_{\gamma}}{3} k^{-1/3} -  c_a c_\gamma k^{-1/3}  \\
                 & \leq   \frac{3c_{\nu} c_{\rho} + 60 + 2c_{\gamma}\sigma_A c_{\rho}}{6\sigma_A c_{\rho}} k^{-1/3}  -  c_a c_\gamma k^{-1/3}  \\
           & \leq - \frac{c_a c_{\gamma}}{2}  k^{-1/3},
       \end{aligned}
   \end{equation}
   where the first inequality uses the fact that $k^{-2/3} \leq k^{-1/3}$, the last inequality uses 
   $$
   c_a \geq \frac{3c_{\nu}c_{\rho} + 60 + 2c_{\gamma}\sigma_A c_{\rho}}{3c_{\gamma}\sigma_A c_{\rho}}.
   $$
   For $\Gamma_4$, since $\eta_k = c_{\eta} k^{1/3}$,  we have that
   \begin{equation}\nonumber
       \begin{aligned}
           \Gamma_4 & = c_\eta k^{1/3} \phi_{\min }+\frac{\sigma_A c_\rho}{2}k^{1/3}-\frac{L}{2} - \frac{c_{\rho}}{2c_{\nu}} k^{ 1/3} -\frac{10\left(L^2+2c_\eta^2 k^{2/3} \phi_{\max }^2\right)}{c_\rho\sigma_A k^{1/3}} - 2c_\gamma L^2 k^{1/3} \\
           & \geq \left( \frac{\sigma_A c_\rho}{2} - \frac{c_{\rho}}{2c_{\nu}} - \frac{20 c_\eta^2  \phi_{\max }^2}{c_\rho\sigma_A } -2c_\gamma L^2 \right)k^{1/3} - \frac{L}{2} - \frac{10L^2}{c_{\rho}\sigma_A}\\
              & \geq \left( \frac{\sigma_A c_\rho}{2} -\frac{\sigma_A c_\rho}{8}-\frac{\sigma_A c_\rho}{8}-\frac{\sigma_A c_\rho}{8}  \right)k^{1/3} - \frac{L}{2} - \frac{10L^2}{c_{\rho}\sigma_A}\\
           & \geq (\frac{\sigma_A c_\rho}{8}   - \frac{L}{2} - \frac{10L^2}{c_{\rho}\sigma_A}) k^{1/3}\\
           & \geq  \frac{\sigma_A c_\rho}{16} k^{1/3}
       \end{aligned}
   \end{equation}
   where the first inequality uses 
$1\leq k \leq K$, the second inequality utilizes $c_{\nu}\geq \frac{1}{4\sigma_A}$ and $c_{\eta} \leq \frac{\sigma_A c_{\rho}}{\sqrt{160}\phi_{\max}}$, $c_{\gamma} \leq \frac{\sigma_A c_{\rho}}{16L^2}$. The last inequality use $c_{\rho}\geq \frac{8L}{\sigma_A} + \frac{160L^2}{\sigma_A^2}$. Let us denote $\mathcal{C}_3 = \frac{c_ac_{\gamma}}{2}$ and $\mathcal{C}_4 = \frac{\sigma_A c_\rho}{16}$. It follows from Assumption \ref{assum:low-bound} that there exists a low bound $\Phi_*$ for the sequence $\{\Phi_k\}$. 
   Plugging those term into \eqref{eq:bound-phi} yields
\begin{equation}\nonumber
\begin{aligned}
  & \mathcal{C}_3 \sum_{k=1}^K k^{-1/3} \mathbb{E}[\| \epsilon_k  \|^2] + \mathcal{C}_4 k^{1/3}\sum_{k=1}^K  \mathbb{E}[\|x_{k+1} - x_k\|^2] + \sigma_{\min}(H) \sum_{k=1}^K \mathbb{E}[\|y_{k+1} - y_k\|^2] \\
  \leq & \Phi_1 - \Phi_* + 2\sigma^2 c_a^2 c_{\gamma} \sum_{k=1}^K k^{-1} \leq \Phi_1 - \Phi_* + 2\sigma^2 c_a^2 c_{\gamma} \ln(K).
  \end{aligned}
\end{equation}
The proof is completed.
\end{proof}

Now we provide the proof of Theorem \ref{theorem:diminish}. 
\begin{proof}[Proof of Theorem \ref{theorem:diminish}]
It follows from Lemma \ref{lem:relation-optima-x} and Theorem  \ref{theo:temp} that
    \begin{equation}\nonumber
        \begin{aligned}
              \sum_{k=1}^K \mathbb{E}\left\|A^T \lambda_k-\nabla f\left(x_k\right)\right\|^2 & \leq  3 \sum_{k=1}^K( \mathbb{E}[ \|v_{k-1} - \nabla f(x_{k-1})\|^2 ]  + (L^2 + \eta_k^2 \phi_{\max }^2) \mathbb{E}[ \|x_k - x_{k-1}\|^2]  )\\ 
              & \leq  6 \sum_{k=1}^K( \mathbb{E}[ \|v_{k-1} - \nabla f(x_{k-1})\|^2 ]  +  \phi_{\max }^2\eta_k^2  \mathbb{E}[ \|x_k - x_{k-1}\|^2]  ) \\
              & \leq  6 c_{\gamma}^2 \phi_{\max}^2 \sum_{k=1}^K( \mathbb{E}[ \|v_{k-1} - \nabla f(x_{k-1})\|^2 ]  +  k^{2/3}  \mathbb{E}[ \|x_k - x_{k-1}\|^2]  ) \\
              & \leq  6 c_{\gamma}^2 \phi_{\max}^2  \sum_{k=1}^K k^{1/3}( k^{-1/3} \mathbb{E}[ \|v_{k-1} - \nabla f(x_{k-1})\|^2 ]  +  k^{1/3}  \mathbb{E}[ \|x_k - x_{k-1}\|^2]  ) \\
              & \leq 6 c_{\gamma}^2 \phi_{\max}^2  \frac{\Phi_1 -\Phi_*+ 2\sigma^2 c_a^2 c_{\gamma} \ln(K)}{\min(\mathcal{C}_3,\mathcal{C}_4,\sigma_{\min}(H))} K^{1/3},
        \end{aligned}
    \end{equation}
    and 
    \begin{equation}\nonumber
    \begin{aligned}
      &  \sum_{k=1}^K \mathbb{E}\left\|A x_{k+1}+B y_{k+1}-c\right\|^2   =  \sum_{k=1}^K \frac{1}{\rho_k^2}\mathbb{E}[\left\|\lambda_{k+1}-\lambda_k\right\|^2] \\
        &  \stackrel{\eqref{eq:diff-lambda-bound2}}{\leq}   \frac{10}{\sigma_A} \sum_{k=1}^K \frac{1}{\rho_k^2}\mathbb{E}[\|v_k 
 - \nabla f(x_k)\|^2] + \frac{5(L^2 + 2 \phi_{\max}^2)}{\sigma_A}  \sum_{k=1}^K \frac{\eta_k^2}{\rho_k^2}\mathbb{E}[\|x_k - x_{k+1}\|^2]   \\
 & \leq   \frac{10}{c_{\rho}^2\sigma_A} \sum_{k=1}^K k^{-2/3} \mathbb{E}[\|v_k 
 - \nabla f(x_k)\|^2] + \frac{5(L^2 + 2 \phi_{\max}^2)c_{\eta}^2}{\sigma_A c_{\rho}^2}  \sum_{k=1}^K \mathbb{E}[\|x_k - x_{k+1}\|^2]  \\
   & \leq   \frac{10+5(L^2 + 2 \phi_{\max}^2)c_{\eta}^2}{c_{\rho}^2\sigma_A} \sum_{k=1}^K k^{-1/3} \left( k^{-1/3} \mathbb{E}[\|v_k 
  - \nabla f(x_k)\|^2] + k^{1/3} \mathbb{E}[\|x_k - x_{k+1}\|^2] \right)  \\
  & \leq   \frac{10+5(L^2 + 2 \phi_{\max}^2)c_{\eta}^2}{c_{\rho}^2\sigma_A} \sum_{k=1}^K  \left( k^{-1/3} \mathbb{E}[\|v_k 
 - \nabla f(x_k)\|^2] + k^{1/3} \mathbb{E}[\|x_k - x_{k+1}\|^2] \right)  \\
 & \leq  \frac{10+5(L^2 + 2 \phi_{\max}^2)c_{\eta}^2}{c_{\rho}^2\sigma_A} \frac{\Phi_1 -\Phi_*+ 2\sigma^2 c_a^2 c_{\gamma} \ln(K)}{\min(\mathcal{C}_3,\mathcal{C}_4,\sigma_{\min}(H))},
    \end{aligned} 
\end{equation}
where the second inequality use $\eta_k>1$ since $c_{\eta}>1$, the last inequality follows from Theorem \ref{theo:temp}.
Finally, 
\begin{equation}\nonumber
\begin{aligned}
    \sum_{k=1}^K   \mathbb{E}\left[\operatorname{dist}\left(B^T \lambda_k, \partial h\left(y_k\right)\right)\right]^2  &\leq 2\|B\|_2^2\|A\|_2^2 \sum_{k=1}^K \rho_k \left( \rho_k  \mathbb{E}[\left\|x_k-x_{k-1}\right\|^2] + \frac{\sigma^2_{\max}(H)}{  \rho_k\|B\|_2^2\|A\|_2^2} \|y_k - y_{k-1}\|^2 \right) \\
    &\leq 2c_{\rho}K^{1/3}\|B\|_2^2\|A\|_2^2 \sum_{k=1}^K \left( k^{1/3}  \mathbb{E}[\left\|x_k-x_{k-1}\right\|^2] + \frac{\sigma^2_{\max}(H)}{  c_{\rho}^2\|B\|_2^2\|A\|_2^2} \mathbb{E}[\|y_k - y_{k-1}\|^2] \right) \\
      &\leq 2c_{\rho}K^{1/3}\|B\|_2^2\|A\|_2^2 \sum_{k=1}^K \left( k^{1/3}  \mathbb{E}[\left\|x_k-x_{k-1}\right\|^2] + \mathbb{E}[\|y_k - y_{k-1}\|^2] \right) \\
   &\leq 2c_{\rho}\|B\|_2^2\|A\|_2^2 \frac{\Phi_1 -\Phi_*+ 2\sigma^2 c_a^2 c_{\gamma} \ln(K)}{\min(\mathcal{C}_3,\mathcal{C}_4,\sigma_{\min}(H))} K^{1/3},
   \end{aligned}
\end{equation}
where the first inequality uses \eqref{eq:bound-partial},  the third inequality uses $c_{\rho} \geq \frac{\|A\|\|B\|}{\sigma^2_{\max}(H)}$. 
Let us denote 
$$
\begin{aligned}
\mathcal{G}_1 : &= 6 c_{\gamma}^2 \phi_{\max}^2  \frac{\Phi_1 -\Phi_*+ 2\sigma^2 c_a^2 c_{\gamma} \ln(K)}{\min(\mathcal{C}_3,\mathcal{C}_4,\sigma_{\min}(H))}, \\
\mathcal{G}_2: &= \frac{10+ 5(L^2 + 2 \phi_{\max}^2)c_{\eta}^2}{c_{\rho}^2 \sigma_A } \frac{\Phi_1 -\Phi_*+ 2\sigma^2 c_a^2 c_{\gamma} \ln(K)}{\min(\mathcal{C}_3,\mathcal{C}_4,\sigma_{\min}(H))},\\
\mathcal{G}_3: &=    2c_\rho^2\|B\|_2^2\|A\|_2^2 \frac{\Phi_1 -\Phi_*+ 2\sigma^2 c_a^2 c_{\gamma} \ln(K)}{\min(\mathcal{C}_3,\mathcal{C}_4,\sigma_{\min}(H))}.
\end{aligned}
$$
Then we have that 
\begin{equation}\nonumber
\begin{aligned}
    \min_{1\leq k \leq K} \mathbb{E}\left[ \text{dist}^2(0,\partial L(x_k,y_k,\lambda_k))\right] &  \leq \frac{1}{K} \sum_{k=1}^K \mathbb{E}\left[ \text{dist}^2(0,\partial L(x_k,y_k,\lambda_k))\right] \\
    & \leq  \frac{1}{K}\left( \mathcal{H}_1 K^{1/3} +  \mathcal{H}_2  + \mathcal{H}_3 K^{1/3}  \right) \\
    & \leq  \mathcal{G}_1 K^{-2/3} +  \mathcal{G}_2 K^{-1} + \mathcal{G}_3 K^{-2/3}. 
    \end{aligned}
\end{equation}
The proof is completed.
\end{proof}

\section{Application to Plug-and-Play algorithm}

The PnP approach is a versatile methodology primarily utilized for addressing inverse problems involving large-scale measurements through the integration of statistical priors defined as denoisers. This approach draws inspiration from well-established proximal algorithms commonly employed in nonsmooth composite optimization, such as the proximal gradient algorithm, Douglas-Rachard splitting algorithm, and ADMM algorithm, etc. The regularization inverse problem can be written as  
\begin{equation}\nonumber
    \min_x \mathbb{E}[ f(x,\xi)] + h(x).
\end{equation}
This is corresponding to an instance of problem \eqref{prob} by letting $A = I, B = -I, c = 0$. 
Recall that the update rule of $y_{k+1}$ in  Algorithm \ref{alg:sam} can be represented as a proximal operator when $H = r I - \rho B^\top B = (r-\rho)I$:
\begin{equation}
   y_{k+1} =\mathrm{prox}_{h/r} \left(  \frac{r-\rho}{r} y_k + \frac{\rho}{r} (x_k  - \lambda_k/\rho) \right).
\end{equation}
We propose a PnP-SMADMM by replacing the proximal operator with a denoiser operator $D_{\theta}$:
$$
y_{k+1} = D_{\theta} \left(  \frac{r-\rho}{r} y_k + \frac{\rho}{r} (x_k  - \lambda_k/\rho) \right),
$$
where $D_{\theta}$ is denoiser operator parameterized by
a neural network with parameters $\theta$. Moreover, we simplify the update rule of $x$-subproblem by considering \eqref{eq:x-simply-update}. The detailed algorithm is referred to as Algorithm \ref{alg:pnp}.
\begin{algorithm}[tb]
\caption{PnP-SMADMM}
\label{alg:pnp}
\textbf{Input}: Parametes $a_k,  \eta_k, m, \rho, H, Q$; initial points $x_0$, $y_0,z_0$.\\
\begin{algorithmic}[1] 
\STATE Sample $\{\xi_{0,t}\}_{t=0}^m$ and let $v_0 = \frac{1}{m} \sum_{t=1}^{m} \nabla f(x_0,\xi_{0,t})$.
\FOR{$k = 0,\cdots,K-1$}
\STATE $ y_{k+1} = D_{\theta} (x_k - \lambda_k/\rho)$.
\STATE $    x_{k+1} = x_k - \frac{1}{\eta_k}(v_k + \rho(x_k  - y_{k+1} - \frac{\lambda_k}{\rho})).$
\STATE $      \lambda_{k+1} = \lambda_k - \rho (x_{k+1} - y_{k+1}).$
\STATE Sample $\xi_{k+1}\in \mathcal{D}$ and let
\begingroup
\fontsize{9pt}{10pt}\selectfont
$$v_{k+1} = \nabla f(x_{k+1},\xi_{k+1}) + (1-a_{k+1})(v_{k} - \nabla f(x_{k},\xi_{k+1})).$$
\endgroup
\ENDFOR
\end{algorithmic}
\end{algorithm}

To guarantee the theoretical convergence, we consider the gradient step (GS) denoiser developed in \cite{cohen2021has,hurault2021gradient,hurault2022proximal} as follows:
\begin{equation}\label{denoiser}
D_\theta=I-\nabla g_\theta,
\end{equation}
which is obtained from a scalar function
$
g_\theta=\frac{1}{2}\left\|x-N_\theta(x)\right\|^2,
$
where the mapping $N_\theta(\mathbf{x})$ is realized as a differentiable neural network, enabling the explicit computation of $g_\theta$ and ensuring that $g_\theta$ has a Lipschitz gradient with a constant $L_g<1$. Originally, the denoiser $D_\theta$ in \eqref{denoiser} is trained to denoise images degraded with Gaussian noise of level $\theta$. In \cite{hurault2021gradient}, it is shown that, although constrained to be an exact conservative field, it can realize state-of-the-art denoising. Remarkably, the denoiser $D_\theta$ in \eqref{denoiser} takes the form of a proximal mapping of a weakly convex function, as stated in the next proposition.

\begin{proposition}[\cite{hurault2022proximal}, Propostion 3.1]\label{prop:ded} $D_\theta(x)=\operatorname{prox}_{\phi_\theta}(x)$, where $\phi_\theta$ is defined by
\begin{equation}\label{thet-phi}
\phi_\theta(x)=g_\theta\left(D_\theta^{-1}(x)\right)-\frac{1}{2}\left\|D_\theta^{-1}(x)-x\right\|^2
\end{equation}
if $x \in \operatorname{Im}\left(D_\theta\right)$ and $\phi_\theta(x)=+\infty$, otherwise. Moreover, $\phi_\theta$ is $\frac{L_g}{L_g+1}$-weakly convex and $\nabla \phi_\theta$ is $\frac{L_g}{1-L_g}$-Lipschitz on $\operatorname{Im}\left(D_\theta\right)$, and $\phi_\theta(x) \geq g_\theta(x) \forall x \in \mathbb{R}^n$.
\end{proposition}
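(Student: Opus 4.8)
The plan is to verify the defining property of the proximal map directly: for every $x$, show that $z \mapsto \phi_\theta(z) + \tfrac12\|z-x\|^2$ is minimized at exactly one point, namely $z = D_\theta(x)$, and then read off the two regularity constants from the spectrum of $\nabla^2\phi_\theta$. First I would make sense of \eqref{thet-phi}. Since $g_\theta$ has an $L_g$-Lipschitz gradient with $L_g < 1$, the map $\nabla g_\theta$ is a strict contraction, so $D_\theta = I - \nabla g_\theta$ is injective (in fact $\|D_\theta(x)-D_\theta(y)\| \ge (1-L_g)\|x-y\|$); its Jacobian $I - \nabla^2 g_\theta$ has spectrum in $[1-L_g,1+L_g]$ and is invertible everywhere; and by the Banach fixed-point theorem applied to $y \mapsto z + \nabla g_\theta(y)$ it is also surjective, hence a $C^1$-diffeomorphism of $\mathbb{R}^n$ onto itself. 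Consequently $D_\theta^{-1}$ is well-defined and differentiable and $\phi_\theta$ in \eqref{thet-phi} is a well-defined differentiable function, the $+\infty$ branch being relevant only if the bound $L_g<1$ is assumed merely on a sub-region so that $\operatorname{Im}(D_\theta)\subsetneq\mathbb{R}^n$.

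The crux is the identity obtained by evaluating \eqref{thet-phi} along the graph of $D_\theta$: with $z = D_\theta(y) = y - \nabla g_\theta(y)$ one has $D_\theta^{-1}(z)-z = \nabla g_\theta(y)$, so $\phi_\theta(D_\theta(y)) = g_\theta(y) - \tfrac12\|\nabla g_\theta(y)\|^2$. Substituting $z = D_\theta(y)$ into the Moreau objective and expanding $\tfrac12\|y-\nabla g_\theta(y)-x\|^2$, the $\tfrac12\|\nabla g_\theta(y)\|^2$ terms cancel and
\begin{equation}\nonumber
\phi_\theta(D_\theta(y)) + \tfrac12\|D_\theta(y)-x\|^2 = g_\theta(y) + \tfrac12\|y-x\|^2 - \langle y-x,\ \nabla g_\theta(y)\rangle.
\end{equation}
Applying the two-sided tangent estimate $|g_\theta(x) - g_\theta(y) - \langle\nabla g_\theta(y), x-y\rangle| \le \tfrac{L_g}{2}\|x-y\|^2$, valid for $L_g$-Lipschitz gradients, bounds the right-hand side below by $g_\theta(x) + \tfrac{1-L_g}{2}\|y-x\|^2$, with equality forcing $y=x$ since $L_g<1$. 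Thus the minimum over $z\in\operatorname{Im}(D_\theta)$, and a fortiori over $z\in\mathbb{R}^n$ since $\phi_\theta\equiv+\infty$ elsewhere, is attained uniquely at $z = D_\theta(x)$, i.e.\ $D_\theta(x) = \operatorname{prox}_{\phi_\theta}(x)$; taking $z = x$ in the same inequality (trivial when $x\notin\operatorname{Im}(D_\theta)$) gives $\phi_\theta(x) \ge g_\theta(x)$.

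Finally I would extract the regularity constants by differentiating the identity $\phi_\theta(D_\theta(y)) = g_\theta(y) - \tfrac12\|\nabla g_\theta(y)\|^2$. One differentiation, the chain rule, symmetry of $\nabla^2 g_\theta$, and cancellation of the invertible factor $I - \nabla^2 g_\theta(y)$ give $\nabla\phi_\theta(D_\theta(y)) = \nabla g_\theta(y)$ (which also re-proves the first-order optimality used above, as $x - D_\theta(x) = \nabla g_\theta(x)$); a second differentiation gives $\nabla^2\phi_\theta(D_\theta(y)) = \nabla^2 g_\theta(y)\,(I-\nabla^2 g_\theta(y))^{-1}$. As $\nabla^2 g_\theta(y)$ is symmetric with eigenvalues in $[-L_g,L_g]$ and $t\mapsto t/(1-t)$ increases on $(-\infty,1)$, the eigenvalues of $\nabla^2\phi_\theta$ lie in $[-\tfrac{L_g}{1+L_g},\tfrac{L_g}{1-L_g}]$, which is precisely $\tfrac{L_g}{L_g+1}$-weak convexity together with $\tfrac{L_g}{1-L_g}$-Lipschitz continuity of $\nabla\phi_\theta$. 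The step I expect to be the main obstacle is keeping the domain bookkeeping honest: if $\operatorname{Im}(D_\theta)$ is not all of $\mathbb{R}^n$ it is a non-convex open set, and then a pointwise Hessian lower bound does not by itself deliver a global weak-convexity inequality between arbitrary points, nor strong convexity of the Moreau objective. The direct minimization in the second step is what sidesteps this for the $\operatorname{prox}$ identity; for the clean spectral statement of weak convexity one leans on the global contraction $L_g<1$ to guarantee $\operatorname{Im}(D_\theta)=\mathbb{R}^n$. A lesser technical point is that the Hessian computation needs $g_\theta\in C^2$; with only $\nabla g_\theta\in C^{0,1}$ one argues instead from monotonicity of $\nabla g_\theta$ and the resolvent/Moreau identity.
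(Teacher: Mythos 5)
The paper does not prove this proposition---it is imported verbatim by citation from Hurault et al.\ (2022, Proposition~3.1)---so there is no in-paper argument to compare against. Your reconstruction is correct and is essentially the standard proof from that source: the cancellation identity $\phi_\theta(D_\theta(y))+\tfrac12\|D_\theta(y)-x\|^2=g_\theta(y)+\tfrac12\|y-x\|^2-\langle y-x,\nabla g_\theta(y)\rangle$ combined with the two-sided descent lemma is exactly the right key step, and under this paper's global assumption that $\nabla g_\theta$ is $L_g$-Lipschitz with $L_g<1$ your Banach fixed-point argument gives $\operatorname{Im}(D_\theta)=\mathbb{R}^n$, so the domain caveat you raise is moot here. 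Your own flagged caveat about needing $g_\theta\in C^2$ for the Hessian computation is the only real gap, and the fix you sketch is the right one: the identity $\nabla\phi_\theta(D_\theta(y))=\nabla g_\theta(y)$ together with $\|D_\theta(y)-D_\theta(y')\|\ge(1-L_g)\|y-y'\|$ already yields the $\tfrac{L_g}{1-L_g}$-Lipschitz bound, and the analogous monotonicity estimate gives the $\tfrac{L_g}{L_g+1}$-weak convexity, without second derivatives.
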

Drawing upon Proposition \ref{prop:ded}, we are interested in developing the PnP-SMADMM algorithm with a plugged denoiser $D_\theta$ in \eqref{denoiser} that corresponds to the proximal operator of a weakly function $\phi_\theta$ in \eqref{thet-phi}. To do so, we turn to target the optimization problems as follows:
\begin{equation}\label{prob:pnp}
\min F_{r, \theta}(x)=\mathbb{E}_{\xi\in \mathcal{D}}[f(x,\xi)]+r \phi_\theta(x),
\end{equation}
where  $\phi_\theta$ is defined as in Proposition \ref{prop:ded} from the function $g_\theta$ satisfying $D_\theta=I-\nabla g_\theta$. Since $\frac{L_g}{L_g+1} < 1$, the proximal operator is well-defined and we can still apply Theorem \ref{theorem:constant} though the function $\phi_\theta$ is  weakly convex. We give the following convergence result for Algorithm \ref{alg:pnp}.  The proof follows from Theorem \ref{theorem:constant} and we omit it. 
\begin{proposition}
    Under the same conditions as in Theorem \ref{theorem:constant}, let the sequence $\left\{x_k, y_k, \lambda_k\right\}_{k=1}^K$ be generated by Algorithm \ref{alg:pnp}. We assume $L_g<1$. Algorithm \ref{alg:sam} obtains an $\epsilon$-stationary point of \eqref{prob:pnp} with at most $\mathcal{O}(\epsilon^{-\frac{3}{2}})$.
\end{proposition}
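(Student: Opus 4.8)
The plan is to obtain this as a corollary of Theorem~\ref{theorem:constant}: first recast~\eqref{prob:pnp} as an instance of~\eqref{prob} and Algorithm~\ref{alg:pnp} as an instance of Algorithm~\ref{alg:sam}, then check the hypotheses of Theorem~\ref{theorem:constant} and account for the single structural difference, namely that the regularizer $r\phi_\theta$ is only weakly convex rather than convex. Concretely, with $A=I$, $B=-I$, $c=0$, and $h(\cdot)=r\phi_\theta(\cdot)$, problem~\eqref{prob:pnp} is exactly~\eqref{prob}; choosing $Q=I-\tfrac{\rho}{\eta}A^\top A$ turns the $x$-update of Algorithm~\ref{alg:sam} into the linearized step~\eqref{eq:x-simply-update}, and choosing $H=rI-\rho B^\top B\succ0$ turns the $y$-subproblem of Algorithm~\ref{alg:sam} into the proximal step $\operatorname{prox}_{\phi_\theta}$, which by Proposition~\ref{prop:ded} is precisely the plugged denoiser $D_\theta$. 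Thus the iterates of Algorithm~\ref{alg:pnp} are those of Algorithm~\ref{alg:sam} applied to~\eqref{prob:pnp}, so it suffices to verify that Theorem~\ref{theorem:constant} can be invoked on this instance.

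I would first check Assumptions~\ref{assm:lipsciz}--\ref{assum:variance}. Assumptions~\ref{assm:lipsciz}, \ref{assum:bound-grad}, and~\ref{assum:variance} constrain only $f$ and are part of the ``same conditions as in Theorem~\ref{theorem:constant}'' hypothesis; $A=I$ has full rank, so Assumption~\ref{assum:rank-A} holds with $\sigma_A=1$. For Assumption~\ref{assum:low-bound}, $F$ is bounded below by hypothesis, and by Proposition~\ref{prop:ded} one has $\phi_\theta(x)\ge g_\theta(x)=\tfrac12\|x-N_\theta(x)\|^2\ge0$, so $h=r\phi_\theta$ is bounded below by $0$. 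Hence every assumption used in the proof of Theorem~\ref{theorem:constant} holds for~\eqref{prob:pnp}.

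The only step that needs care is that the proof of Theorem~\ref{theorem:constant} was written for convex $h$, whereas $r\phi_\theta$ is $\tfrac{rL_g}{L_g+1}$-weakly convex. Convexity of $h$ enters that proof in only three places: (i) well-posedness and single-valuedness of the $y$-subproblem; (ii) the sufficient-decrease inequality~\eqref{eq:descent-y}; and (iii) the existence of a subgradient $\mu\in\partial h(y_k)$ coming from first-order optimality of the $y$-step, used in~\eqref{eq:bound-partial}. Since $\tfrac{L_g}{L_g+1}<1$ and $\rho=c_\rho K^{1/3}$ dominates this modulus for $K$ large (and $H\succ0$), the $y$-subproblem objective $r\phi_\theta(y)-\langle\lambda_k,\,x_k-y\rangle+\tfrac{\rho}{2}\|x_k-y\|^2+\tfrac12\|y-y_k\|_H^2$ is strongly convex with modulus at least $\rho-\tfrac{rL_g}{L_g+1}+\sigma_{\min}(H)>0$, which gives (i) and restores~\eqref{eq:descent-y}; and because $D_\theta=\operatorname{prox}_{\phi_\theta}$ with $y_k\in\operatorname{Im}(D_\theta)$ for $k\ge1$, the optimality condition of the denoiser step supplies $\mu=r\nabla\phi_\theta(y_k)\in\partial h(y_k)$ (a singleton, since $\nabla\phi_\theta$ is Lipschitz on $\operatorname{Im}(D_\theta)$), so~\eqref{eq:bound-partial} and hence Lemma~\ref{lem:relation-optima-x} are unchanged. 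Nothing else in the chain Lemma~\ref{lem:diff-nablaf-v}--Lemma~\ref{lem:xk-xk+1} and the telescoping argument of Theorem~\ref{theorem:constant} uses convexity of $h$, so the conclusion $\min_{1\le k\le K}\mathbb{E}[\operatorname{dist}^2(0,\partial L(x_k,y_k,\lambda_k))]=\mathcal{O}(K^{-2/3})$ for the split reformulation of~\eqref{prob:pnp} transfers verbatim, yielding an $\epsilon$-stationary point within $\mathcal{O}(\epsilon^{-3/2})$ SFO queries.

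I expect the main obstacle to be exactly item~(ii): confirming that the descent lemma underpinning Theorem~\ref{theorem:constant} survives when $\operatorname{prox}_{h/r}$ is replaced by $D_\theta$, which hinges on the weak-convexity modulus $\tfrac{rL_g}{L_g+1}$ being dominated by the penalty $\rho$ so that the $y$-subproblem stays strongly convex. The hypothesis $L_g<1$ is what guarantees that $\phi_\theta$ is proper, bounded below, with Lipschitz gradient on $\operatorname{Im}(D_\theta)$, and equal through its proximal map to the denoiser $D_\theta$, and is therefore essential to the reduction.
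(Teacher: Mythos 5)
Your proposal is correct and follows exactly the route the paper intends: the paper states only that ``the proof follows from Theorem~\ref{theorem:constant}'' because $\tfrac{L_g}{L_g+1}<1$ keeps the proximal operator of the weakly convex $\phi_\theta$ well defined, and then omits all details. Your writeup supplies precisely the omitted verification --- the reduction $A=I$, $B=-I$, $c=0$, $h=r\phi_\theta$, the lower bound $\phi_\theta\ge g_\theta\ge 0$, and the check that the penalty plus $H=(r-\rho)I$ dominates the weak-convexity modulus so that the $y$-subproblem stays strongly convex and \eqref{eq:descent-y} and \eqref{eq:bound-partial} survive --- so it is a more careful rendering of the same argument rather than a different one.
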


\section{Experiments}
In this section, we will compare our algorithm  SMADMM with the existing stochastic ADMM algorithms \cite{huang2016stochastic,zheng2016fast,huang2019faster,zeng2024accelerated} on the Graph-guided binary classification problem. We also compare RED \cite{romano2017little}, PnP-SADMM \cite{sun2021scalable}, SPIDER-ADMM, and ASVRG-ADMM with PnP prior on CT image reconstruction and {nonconvex phase retrieval problems}. 

\subsection{Graph-guided binary Classification}
At the outset, we focus on a binary classification instance that incorporates the correlations among features. Assume that we possess a set of training samples denoted as $\{(a_i,b_i)\}_{i=1}^n$. Here, $a_i$  is an $m$-dimensional vector, and  $b_i$ represents the corresponding label which can only take on the values of either  $-1$  or $+1$. To address this problem, we adopt a model called the graph-guided fused lasso \cite{Kim2009Multivariate}, which demands minimizing the subsequent expression:
\begin{equation}\nonumber
    \min_{x} \frac{1}{N} \sum_{i = 1}^N f_{i}(x)+ \lambda_1 \|Ax\|_1.
\end{equation}
In this context,  $f_i (x)=\frac{1}{1+\exp{(b_i a_i^T x)}}$ symbolizes a sigmoid loss function {which is nonconvex and smooth\cite{Bian_2021}}. The matrix  $A$ is formulated as  $A = [G;I] $, where  $G$ is obtained through sparse inverse covariance matrix estimation as detailed in \cite{Kim2009Multivariate,Friedman2008Sparse}.
Regarding this experiment, we establish $H(x)=\frac{1}{n}\sum_{i=1}^{n} f_i(x)$  and $F(Ax)=\lambda_1 \| Ax\|_1 $ . Subsequently, we scrutinize four publicly available datasets \cite{Chang2011LIBSVM} as illustrated in Table \ref{tab:graph_fused_lasso_datasets}.

\begin{table}
\caption{Real datasets for graph - guided fused lasso.}
\centering
\begin{tabular}{|c|c|c|c|c|c|}
\hline
datasets & \ training & \ test & \ features & \ classes\\
\hline
splice-scale & 500 & 500 & 60 & 2  \\
\hline
a8a & 11348 & 11348 & 300 & 2 \\
\hline
a9a & 16280 & 16280 & 123 & 2 \\
\hline
ijcnn1 & 24995 & 24995 & 22 & 2 \\
\hline

\end{tabular}
\label{tab:graph_fused_lasso_datasets}
\end{table}

{In the experimental setup, to validate the SFO complexity of the proposed algorithm, we compare our algorithm SMADMM with three other stochastic ADMM algorithms, including  SADMM \cite{huang2016stochastic}, SVRG-ADMM \cite{huang2016stochastic}, SARAH-ADMM \cite{huang2019faster} and ASVRG-ADMM \cite{zeng2024accelerated}.  All algorithms are implemented in MATLAB, and all experiments are performed on a PC with an Intel i7-4790 CPU and 16GB memory. }

{All experiments used fixed regularization $\lambda_1 = 10^{-11}$ with batch sizes varying by algorithm: SADMM/SMADMM employed adaptive batches $\{100, 200, 300\}$ based on dataset dimensions, while SVRG-ADMM/SARAH-ADMM/ASVRG-ADMM utilized full outer gradients with fixed inner-loop batches of 300 \cite{Bian_2021}. Parameter optimization used grid search over theoretically valid ranges for step size coefficients ($c_\eta \in [0.05, 0.3]$) and momentum weights ($a_k \in [0.01, 1.0]$). }

For SMADMM specifically, the adaptive step size followed $\eta_k = \min(0.1k^{1/3},\ 0.5)$ with $c_\eta = 0.1$, while momentum decay implemented $a_k = \max(0.5k^{-2/3},\ 0.01)$. 

\begin{figure}[htbp]
    \centering
    \captionsetup[subfigure]{justification=centering}
    
    \begin{subfigure}[t]{0.46\linewidth}
        \centering
        \includegraphics[height=6cm]{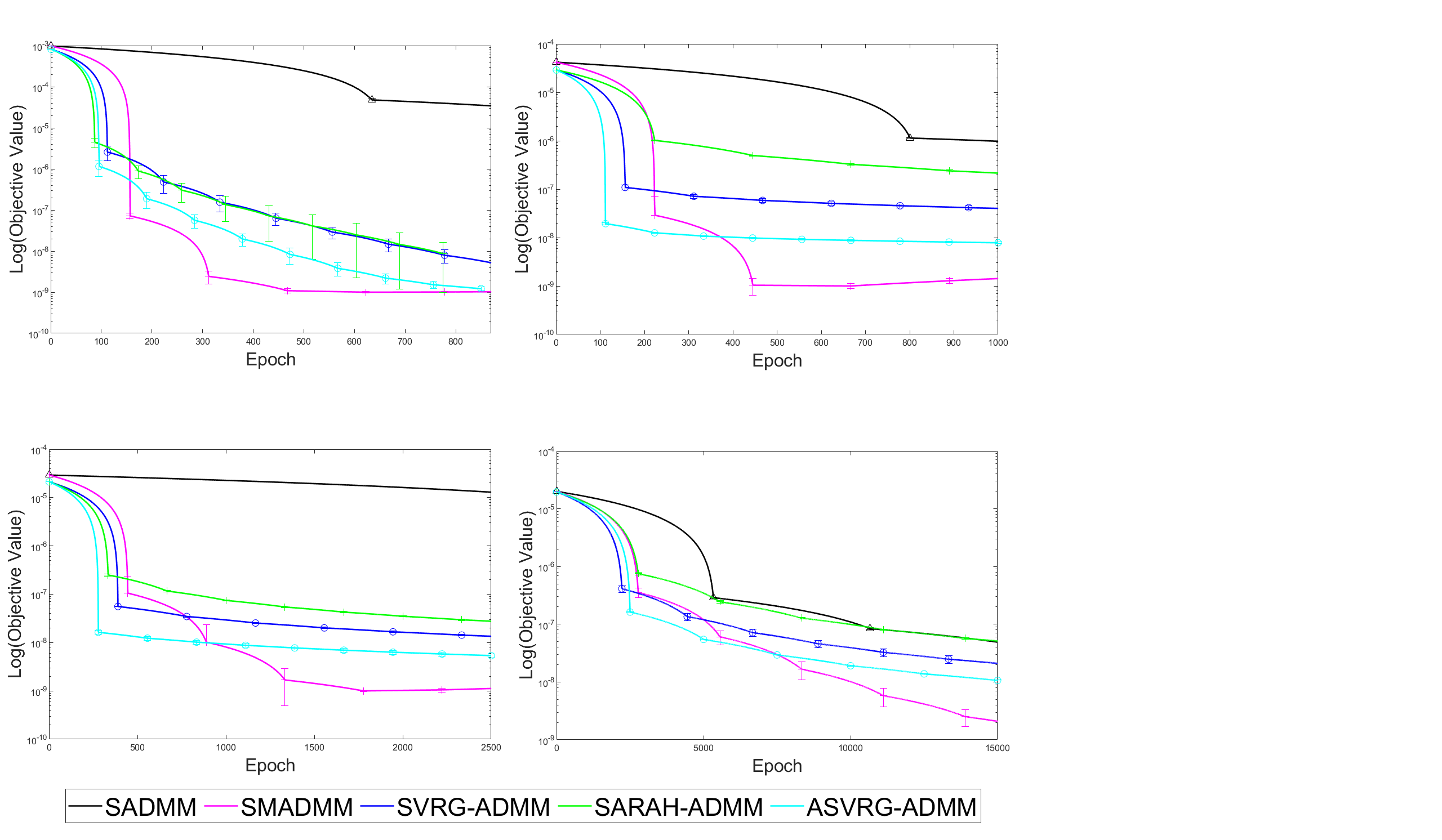}
        \caption{Objective value.}
        \label{fig:objective-value}
    \end{subfigure}
    \hfill
    \begin{subfigure}[t]{0.46\linewidth}
        \centering
        \includegraphics[height=6cm]{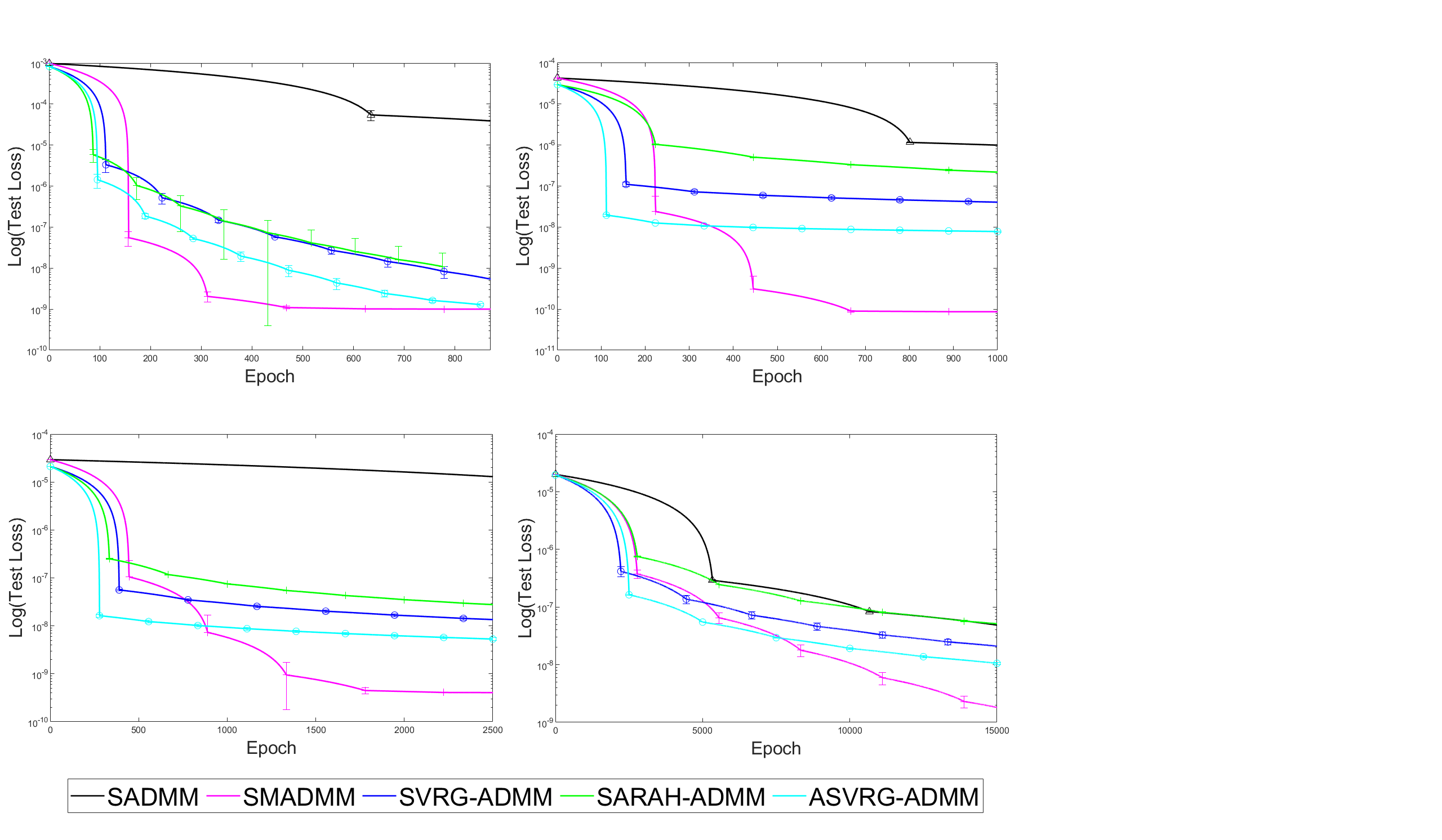}
        \caption{Test loss.}
        \label{fig:test-loss}
    \end{subfigure}

    \caption{Comparison of epoch-wise trends for five algorithms across four datasets.}
    \label{fig:combined-epoch-plots}
\end{figure}

{To comprehensively analyze the SFO complexity-performance relationship, we initially conducted dual evaluation of objective function values and test loss against both CPU time and training epochs. Observing strong correlation between epoch-based and time-based progression trends, we present only the epoch-normalized results in Figure \ref{fig:combined-epoch-plots} to avoid redundancy. These figures demonstrate SMADMM's superior convergence speed and accuracy across all datasets (splice-scale, a8a, a9a, ijcnn1).}

\subsection{Sparse-View CT reconstruction} 
Now we consider a sparse-view Computed Tomography
(SVCT) measurement model \cite{kak2001principles}:
\begin{equation}
    \min_{x\in \mathbb{R}^{n}} \frac{1}{N} \sum_{i=1}^N \|A_i x - b_i\|^2 + r(x),
\end{equation}
where $b_i\in \mathbb{R}^{m}$ is the measured sinogram for the $i$-th projection, $A_i$ is a discretized Radon transform matrix of size $m\times n$ corresponding to a parallel beam setting, $x\in\mathbb{R}^n$ is the image, $R(x)$ denote the regularization function.  We consider simulated data obtained from the clinically realistic CT images provided by Mayo
Clinic for the low-dose CT grand challenge~\cite{Mayo2016}. We compare our PnP-SMADMM algorithm with other ADMM algorithms with PnP prior.   
Specifically, 5936 2D slices of size $512\times 512$ are used to train the models. Another 10 slides are used for testing. The training CT images are divided into $128\times 128$ patches. We use DnCNN~\cite{Zhang2017} as the denoiser $D_\theta$ with the fixed noise level $\sigma = 5$, which consists of 17 convolutional layers. In order to ensure differentiability, we change RELU activations $\mathrm{RELU}(x) =\max \{x,  0\}$ to $\mathrm{Softplus}(x) = \ln (1+\exp (x))$. We aim to train a gradient step denoiser $D_\theta:\mathbb{R}^n\to \mathbb{R}^n$, i.e., $	D_\theta(\mathbf{x}) = \mathbf{x} -  \nabla g_\theta(\mathbf{x})$, where $\nabla g_\theta:\mathbb{R}^n\to \mathbb{R}$ is a scalar function parameterized by a differentiable neural network.  
 The gradient denoiser was trained using the Adam optimizer for 50 epochs, the batch size was set to 128. The learning rate was set to $10^{-3}$ for the first 25 epoches and then reduced to $10^{-4}$. The denoiser $D_\theta$ was trained on a single NVIDIA A800 80GB GPU, and it took about 6.4 hours. All algorithms were implemented under the open-source deep learning framework PyTorch. 

\begin{table*}[!ht]
    \centering\footnotesize\setlength\tabcolsep{6.pt}
    \caption{Average SNR and SSIM values compared with different methods on the 5 test  slides with input $\text{SNR}=50$ dB and total $120$ projection views.}
    \label{table: 120 views}
    \begin{tabular}{lcccccccc}
        \toprule
        \multirow{2}{*}{Methods}
        & \multicolumn{2}{c}{5 batch sizes} & \multicolumn{2}{c}{10 batch sizes} &	\multicolumn{2}{c}{20 batch sizes} & \multicolumn{2}{c}{40 batch sizes}\\
		\cmidrule(lr){2-3} \cmidrule(lr){4-5} \cmidrule(lr){6-7} \cmidrule(lr){8-9}
        & SNR & SSIM & SNR & SSIM  & SNR & SSIM & SNR & SSIM  \\
        \midrule
        RED-SD~\cite{romano2017little}  & 32.27 & 0.9679 & 32.34 & 0.9688 & 32.36 & 0.9691 & 32.36 & 0.9691  \\
        SPIDER-ADMM~\cite{huang2019faster} & 32.80 & 0.9676 &
        32.91 & 0.9686 & 33.07 & 0.9701 & 33.12 & 0.9707 \\
        PnP-SADMM~\cite{sun2021scalable}  & 33.05 & 0.9697 & 33.14 & 0.9707 & 33.18 & 0.9711 & 33.20 & 0.9713  \\
        ASVRG-ADMM~\cite{zeng2024accelerated} & 32.96 & 0.9697 & 33.05 & 0.9706 & 33.07 & 0.9709 & 33.09 & 0.9710 \\
        PnP-SMADMM & {\bf 33.17} & {\bf 0.9710} & {\bf 33.19} & {\bf 0.9713}  & {\bf 33.20} & {\bf 0.9714} & {\bf 33.21} &  {\bf 0.9714}\\
        \bottomrule
    \end{tabular}
\end{table*}
\begin{table*}[!ht]
    \centering\footnotesize\setlength\tabcolsep{6.pt}
    \caption{Average SNR and SSIM values compared with different methods on the 5 test  slides with input $\text{SNR}=50$ dB and total $180$ projection views.}
    \label{table: 180 views}
    \begin{tabular}{lcccccccc}
        \toprule
        \multirow{2}{*}{Methods}
        & \multicolumn{2}{c}{5 batch sizes} & \multicolumn{2}{c}{10 batch sizes} &	\multicolumn{2}{c}{20 batch sizes} & \multicolumn{2}{c}{40 batch sizes}\\
		\cmidrule(lr){2-3} \cmidrule(lr){4-5} \cmidrule(lr){6-7} \cmidrule(lr){8-9}
        & SNR & SSIM & SNR & SSIM  & SNR & SSIM & SNR & SSIM  \\
        \midrule
        RED-SD~\cite{romano2017little}  & 33.05 & 0.9726 & 33.16 & 0.9737 & 33.21 & 0.9742 & 33.24 & 0.9745  \\
        SPIDER-ADMM~\cite{huang2019faster} & 33.71 & 0.9722 & 33.95 & 0.9738 & 34.14 & 0.9754 & 34.32 & 0.9765 \\
        PnP-SADMM~\cite{sun2021scalable}  & 34.17 & 0.9753 & 34.33 & 0.9765 & {34.40} & {0.9771} & 34.45 & 0.9774  \\
        ASVRG-ADMM~\cite{zeng2024accelerated} & 34.21 & 0.9758 & 34.33 & 0.9767 & 34.40 & 0.9772 & 34.43 & 0.9774 \\
        PnP-SMADMM & {\bf 34.38} & {\bf 0.9770} & {\bf 34.43} & {\bf 0.9773}  & {\bf 34.46} & {\bf 0.9775} & {\bf 34.48} &  {\bf 0.9776}\\
        \bottomrule
    \end{tabular}
\end{table*}

\begin{table*}[!ht]
    \centering\footnotesize\setlength\tabcolsep{10.pt}
    \caption{Average SNR and SSIM values about different $a_k = \frac{1}{k^\alpha} (\alpha =0.1,0.5,2/3,2)$ on the 5 test slides with input $\text{SNR}=50$ dB and total $180$ projection views.}
    \label{table: ablation study on ak}
    \begin{tabular}{lcccccccc}
        \toprule
        \multirow{2}{*}{Parameters}
        & \multicolumn{2}{c}{5 batch sizes} & \multicolumn{2}{c}{10 batch sizes} &	\multicolumn{2}{c}{20 batch sizes} & \multicolumn{2}{c}{40 batch sizes}\\
		\cmidrule(lr){2-3} \cmidrule(lr){4-5} \cmidrule(lr){6-7} \cmidrule(lr){8-9}
        & SNR & SSIM & SNR & SSIM  & SNR & SSIM & SNR & SSIM  \\
        \midrule
        $\alpha = 0.1$  & 34.19 & 0.9733 & 34.34 & 0.9767 & {34.41} & {0.9771} & 34.45 & 0.9774  \\
		$\alpha = 0.5$  & 34.32 & 0.9744 & 34.40 & 0.9771 & {34.44} & {0.9774} & 34.47 & 0.9775  \\
        $\alpha = 2/3 $ & {\bf 34.38} & {\bf 0.9770} & {\bf 34.43} & {\bf 0.9773}  & {\bf 34.46} & {\bf 0.9775} & {\bf 34.48} &  {\bf 0.9776}\\
		$\alpha = 2$  & 23.55 & 0.9162 & 27.29 & 0.9259 & {30.61} & {0.9552} & 32.44 & 0.9675 \\
        \bottomrule
    \end{tabular}
\end{table*}
 
\begin{table*}[!ht]
    \centering\footnotesize\setlength\tabcolsep{6.pt}
    \caption{Average SNR value compared with different methods on the 3 test  images with input $\text{SNR}=25$ dB and total $6$ measurements for phase retrieval.}
    \label{table: 6 measurements}
    \begin{tabular}{lccccc}
        \toprule
        Method
        & $B=1$ & $B=2$ & $B=3$ \\
        \midrule
        On-RED~\cite{wu2019online}  & 31.33 & 32.06 & 32.07 \\
        SPIDER-ADMM~\cite{huang2019faster} & 31.37 & 33.21 & 33.03 \\
        PnP-SADMM~\cite{sun2021scalable}  & 31.26 & 33.03 & 33.06  \\
        ASVRG-ADMM~\cite{zeng2024accelerated} & 31.52 & 33.27 & 33.20 \\
        PnP-SMADMM & {\bf 31.56} & {\bf 33.76} & {\bf 33.57} \\
        \bottomrule
    \end{tabular}
\end{table*}

We implement the measurement operator $A_i$ and its adjoint $A_i^{\mathrm{T}}$ using the PyTorch implementations of the \texttt{Radon} and \texttt{IRadon}~\footnote{https://github.com/phernst/pytorch\_radon} transforms. The CT machine is assumed to project from nominal angles with $N \in \{ 120, 180 \}$ projection views, which are evenly distributed over a half circle, using 724 detector pixels. Gaussian noise is added to the sinograms to achieve an input SNR of 50 dB. We compare the classic PnP-SADMM method, which is a special case of PnP-SMADMM with $a=1$, and SPIDER-ADMM with the same PnP prior, other parameters, including the step size $\eta$ and the penalty coefficient $\rho$ are the same. {For parameter selection, according to Theorem 3.2, we choose the optimal $a_k = 1/k^{\frac{2}{3}}$.
Table~\ref{table: 120 views} and Table~\ref{table: 180 views} show the average SNR and SSIM values of RED-SD(steepest descent)~\cite{romano2017little}, SPIDER-ADMM~\cite{huang2019faster}, the classic PnP-SADMM method~\cite{sun2021scalable}, ASVRG-ADMM~\cite{zeng2024accelerated}, and the proposed method on the 10 test slides with input $\text{SNR}=50$ dB and total $120$ and $180$ projection views, respectively. The batch size is set to 5, 10, 20, and 40. The results show that the proposed method outperforms the classic PnP-SADMM method in terms of both SNR and SSIM. The proposed method achieves better and more stable recovery results than the classic method with minibatch sizes, and it has the memory efficient advantage due to its fewer online measurements. The visual comparison of the 180 views CT reconstruction with RED-SD  and PnP-SADMM is shown in Figure~\ref{fig: 180 views with batch size 5}. The results show that the proposed method can achieve better image quality than the classic PnP-SADMM method and RED-SD. Recovery results over iteration about the classic PnP-SADMM method and the proposed method with 5 minibatch sizes are shown in Figure~\ref{fig: time complexity}. These results show that the proposed method with the minibatch size achieves superior performance against the classic PnP-SADMM method. The ablation study on $a_k = \frac{1}{k^\alpha} (\alpha =0.1,0.5,2/3,2)$ is shown in Table~\ref{table: ablation study on ak}, the case $ a_k = \frac{1}{k^{2/3}}$ chieves the best performance. The numerical results are consistent with Theorem~\ref{theorem:diminish}.
\begin{figure}[htbp]
	\centering
	\captionsetup[subfigure]{justification=centering}
	\begin{subfigure}[b]{.49\linewidth}
		\centering
		\begin{tikzpicture}[spy using outlines={rectangle,blue,magnification=5,size=1.2cm, connect spies}]
		\node {\includegraphics[height=3cm]{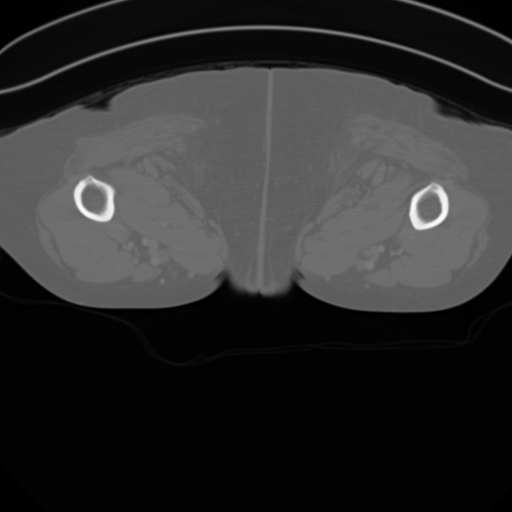}};
		\spy on (-0.8,0.2) in node [left] at (1.5,.9);
		\end{tikzpicture}
		\caption{Ground-truth}
	\end{subfigure}
	\begin{subfigure}[b]{.49\linewidth}
		\centering
		\begin{tikzpicture}[spy using outlines={rectangle,blue,magnification=5,size=1.2cm, connect spies}]
		\node {\includegraphics[height=3cm]{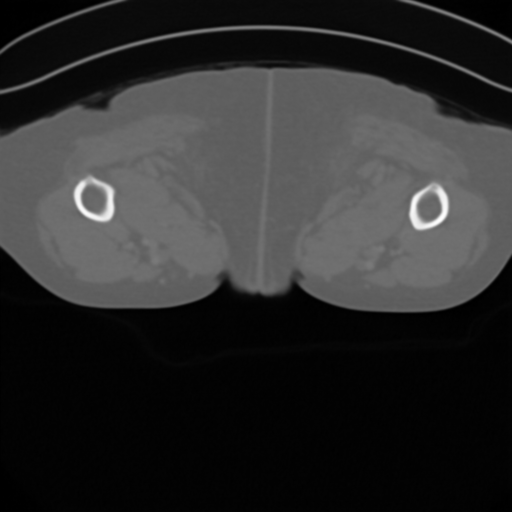}};
		\spy on (-0.8,0.2) in node [left] at (1.5,.9);
		\end{tikzpicture}
		\caption{RED-SD (34.15 dB) }
	\end{subfigure}
	\\
	\begin{subfigure}[b]{.49\linewidth}
		\centering
		\begin{tikzpicture}[spy using outlines={rectangle,blue,magnification=5,size=1.2cm, connect spies}]
		\node {\includegraphics[height=3cm]{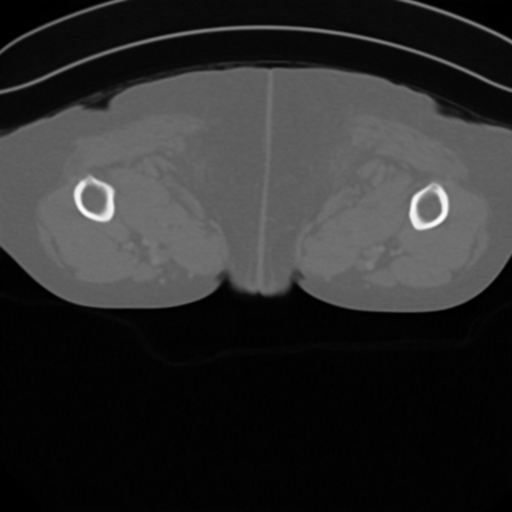}};
		\spy on (-0.8,0.2) in node [left] at (1.5,.9);
		\end{tikzpicture}
		\caption{PnP-SADMM \\ (35.10 dB)}
	\end{subfigure}
	\begin{subfigure}[b]{.49\linewidth}
		\centering
		\begin{tikzpicture}[spy using outlines={rectangle,blue,magnification=5,size=1.2cm, connect spies}]
		\node {\includegraphics[height=3cm]{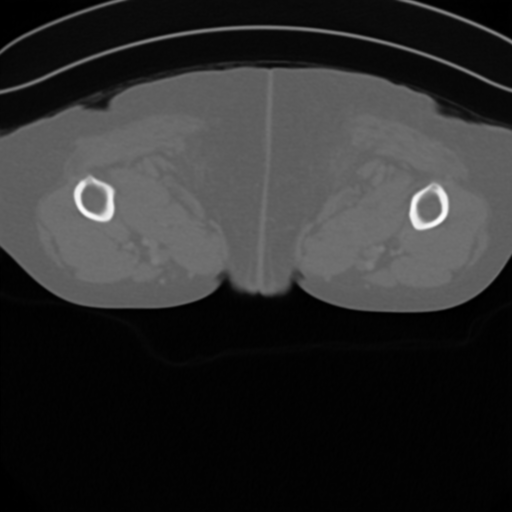}};
		\spy on (-0.8,0.2) in node [left] at (1.5,.9);
		\end{tikzpicture}
		\caption{PnP-SMADMM \\ (35.36 dB) }
	\end{subfigure}
	\caption{Visual comparison of 180 views CT reconstruction with RED-SD and PnP-SADMM. The input SNR is $50$ dB, and the batch size is set to 5.}
	\label{fig: 180 views with batch size 5}
\end{figure}
\begin{figure}[htbp]
	\centering
	\captionsetup[subfigure]{justification=centering}
	\begin{subfigure}[b]{0.49\linewidth}
		\centering
		\includegraphics[height=3cm]{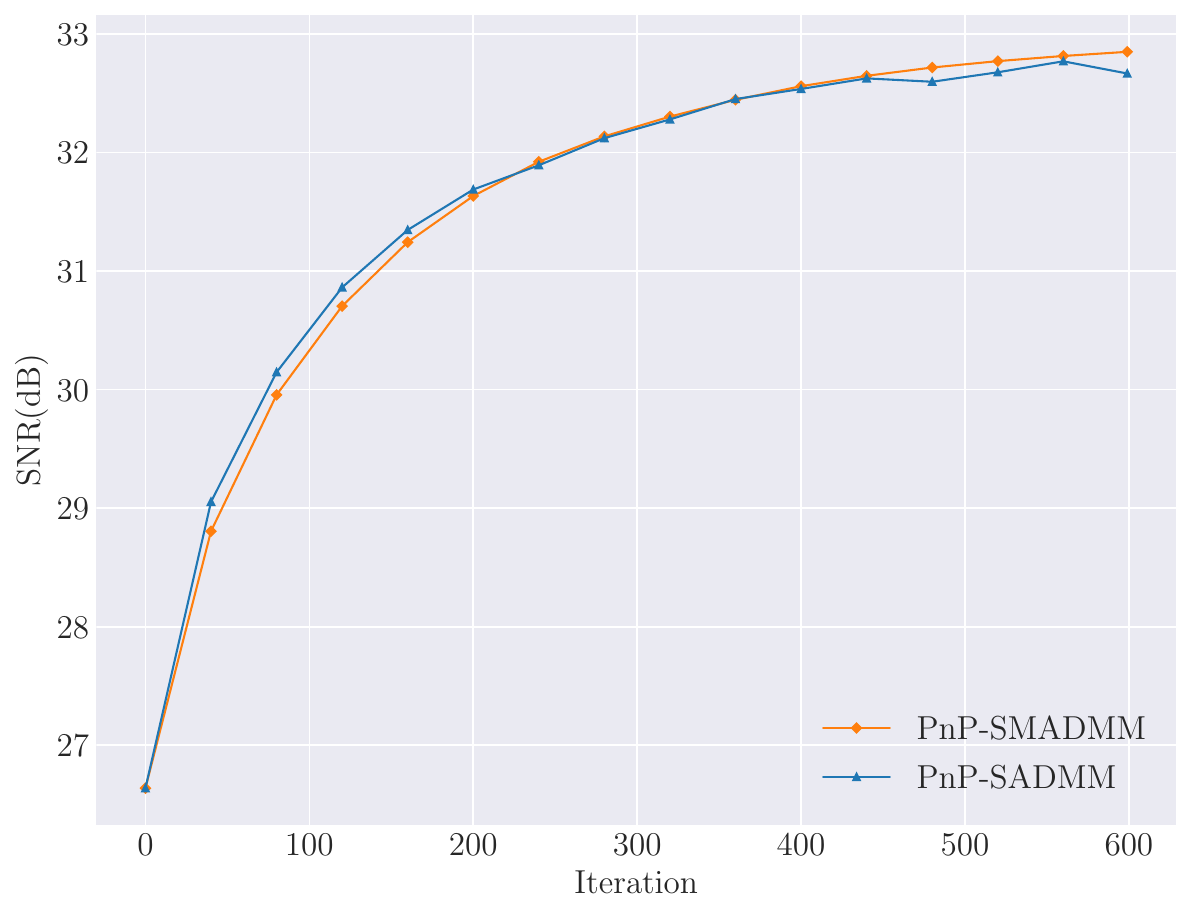}
		\caption{120 views}
		\label{subfig:120 views}
	\end{subfigure}
	\begin{subfigure}[b]{0.49\linewidth}
		\centering
		\includegraphics[height=3cm]{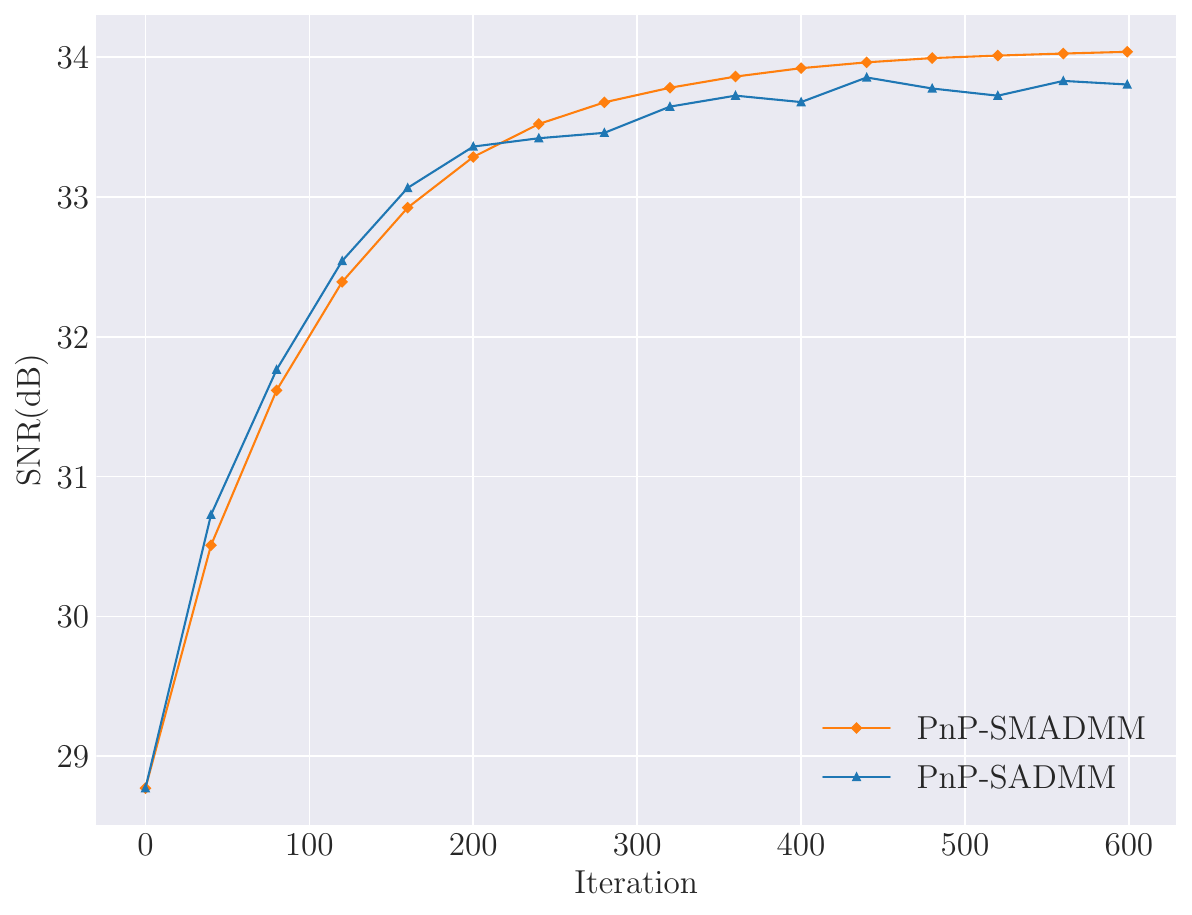}
		\caption{180 views}
		\label{subfig:180 views}
	\end{subfigure}
 \begin{subfigure}[b]{0.49\linewidth}
		\centering
		\includegraphics[height=3cm]{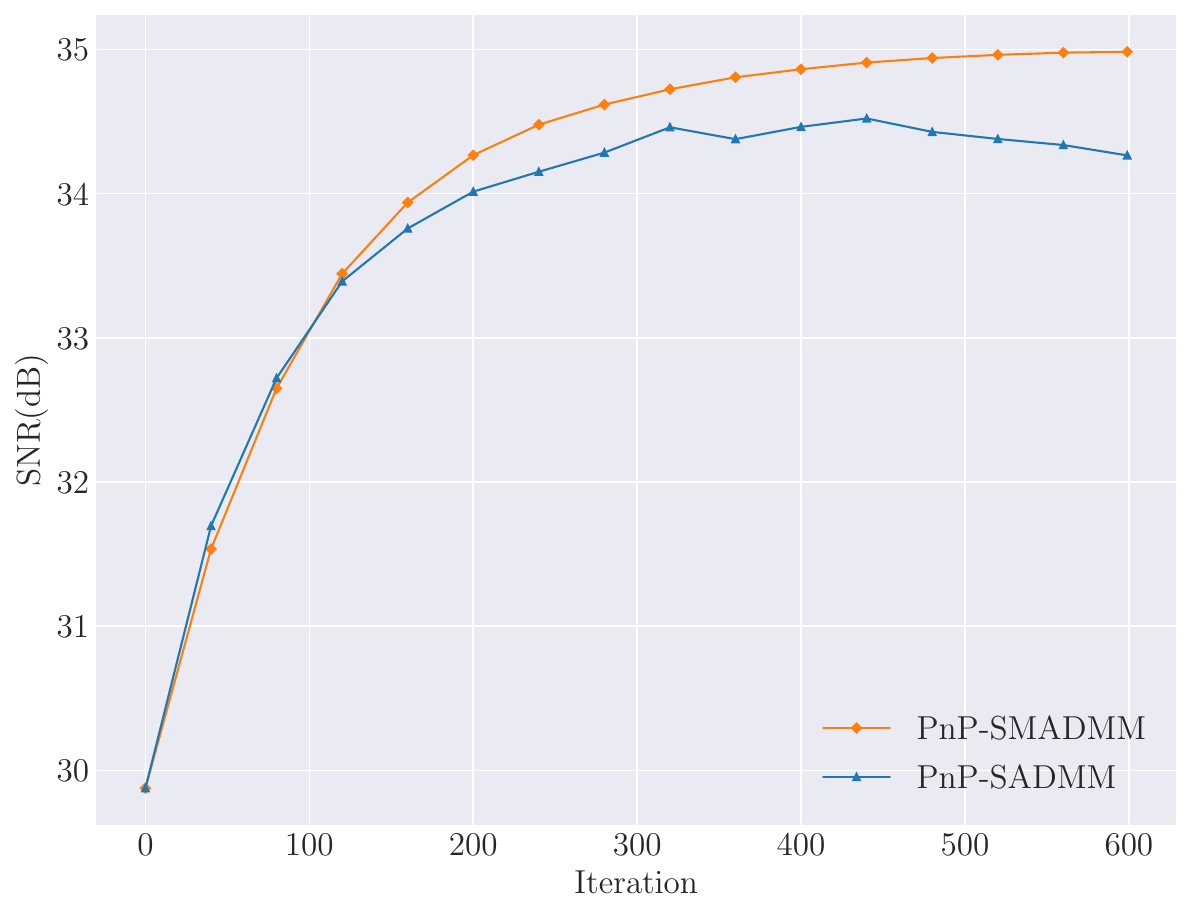}
		\caption{360 views}
		\label{subfig:360 views}
	\end{subfigure}
 \begin{subfigure}[b]{0.49\linewidth}
		\centering
		\includegraphics[height=3cm]{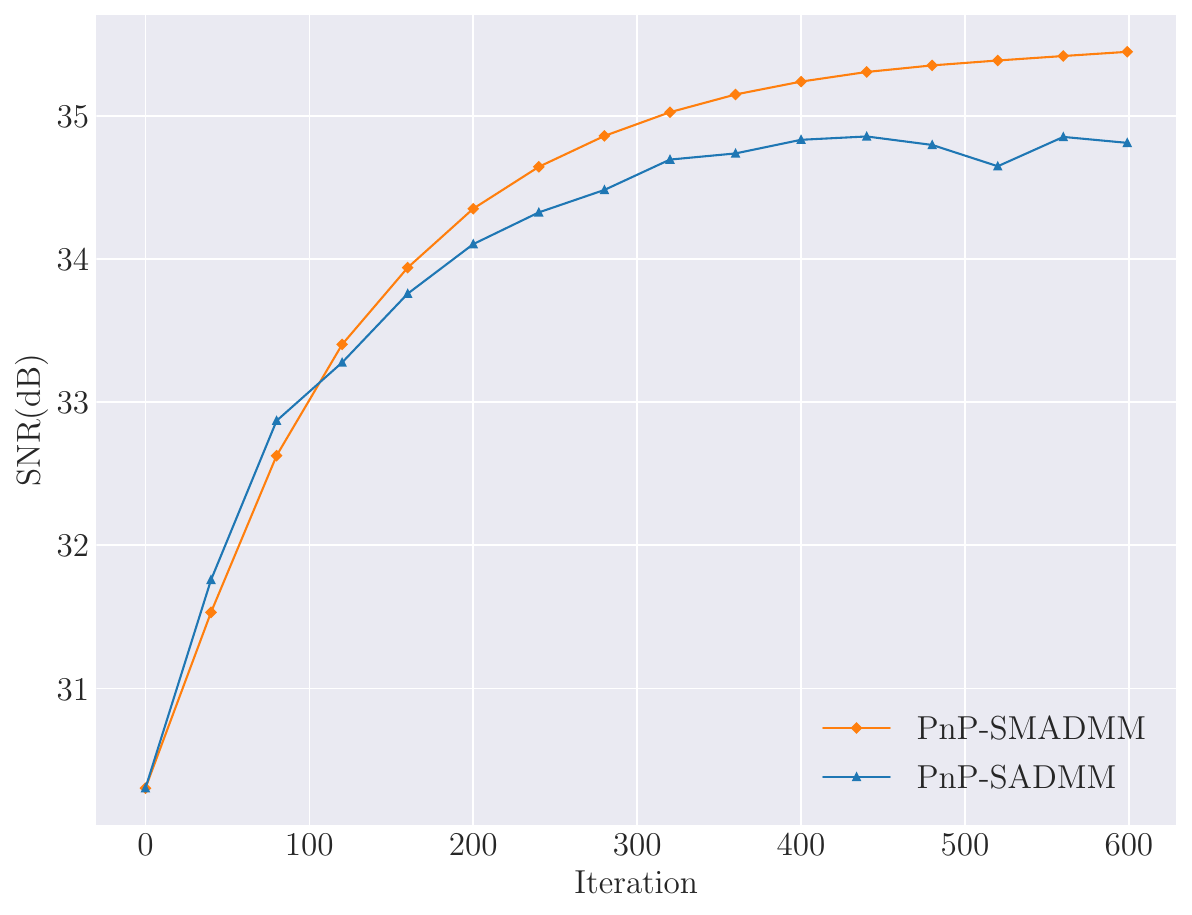}
		\caption{720 views}
		\label{subfig:720 views}
	\end{subfigure}
	\caption{Performance Comparison of CT image reconstruction over iterations with 5 minibatch sizes.}
	\label{fig: time complexity}
\end{figure}

\subsection{Phase Retrieval}
We evaluated the performance of PnP-SMADMM on a nonconvex phase retrieval problem~\eqref{problem:PR} using coded diffraction patterns (CDP), formulated as:
\begin{equation}\label{problem:PR}
    \min_{x\in \mathbb{R}^n} \frac{1}{N} \sum_{i=1}^N \left\| \mathbf{y}_i -\left\vert \mathcal{F}\mathcal{M}_i \mathbf{x} \right\vert \right\|^2 + r(\mathbf{x}),
\end{equation}
where $\mathcal{F}$ represents the 2D discrete Fast Fourier Transform (FFT), and $\mathcal{M}_i$ is the $i$-th random phase mask that modulates the light and the modulation code. Each entry of $\mathcal{M}_i$ is uniformly drawn from the unit circle in the complex plane. We compare On-RED~\cite{wu2019online}, the classic PnP-SADMM~\cite{sun2021scalable}, SPIDER-ADMM~\cite{huang2019faster}, and ASVRG-ADMM~\cite{zeng2024accelerated} with PnP priors. To ensure a fair comparison, all hyperparameters were kept consistent across online ADMM algorithms, with $\eta = \frac{1}{6+2\tau},\tau = 1.3181, K =600$, after careful manual tuning. Table~\ref{table: 6 measurements} presents the comparison with state-of-the-art online methods incorporating PnP priors, PnP-SMADMM achieves the best performance.

\section{Conslusion}
This paper introduces a single-loop  SMADMM for tackling a class of nonconvex and nonsmooth optimization problems. We establish that SMADMM achieves an optimal oracle complexity of $\mathcal{O}(\epsilon^{-\frac{3}{2}})$ in the online setting. In particular, SMADMM requires only $\mathcal{O}(1)$ gradient evaluations per iteration and avoids the need for restarting with large batch gradients.  Furthermore, we extend our method by integrating it with PnP priors, resulting in the PnP-SMADMM algorithm. Numerical experiments on classification, CT image reconstruction and phase retrieve validate the theoretical findings. Finally, our proposed algorithms can easily extended to solve the following multi-block optimization problem.

\bibliographystyle{siamplain}
\bibliography{ref}

\end{document}